\newcommand\C{\mathbb{C}}
\newcommand\R{\mathbb{R}}
\renewcommand\SS{\mathbb{S}}
\newcommand\Q{\mathbb{Q}}
\newcommand\Z{\mathbb{Z}}
\newcommand\FF{\mathcal{F}^+}
\newcommand\NN{\mathcal{N}^+}
\newcommand{\Aut}{\textup{Aut}}
\newcommand{\Ad}{\textup{ad}}
\newcommand{\Br}{\textup{Br}}
\newcommand{\coker}{\textup{cok}}
\newcommand{\Id}{\textup{Id}}
\newcommand{\im}{\textup{im}}
\newcommand{\ls}{\textup{ls}}
\newcommand{\Sym}{\mathop{\mathrm{Sym}}\nolimits}
\newcommand{\Image}{\mathop{\mathrm{Im}}\nolimits}
\newcommand{\Tors}{\mathop{\mathrm{Tors}}\nolimits}
\newcommand{\interior}{\mathop{\mathrm{int}}\nolimits}
\newcommand{\fr}{\mathrm{fr}}
\newcommand{\pr}{\mathrm{pr}}
\newcommand{\dd}{\ul d}
\newcommand{\an}[1]{\langle{#1}\rangle}
\newcommand{\ol}{\overline}
\newcommand{\ul}{\underline}
\newcommand{\xra}{\xrightarrow}
\newcommand{\Sigmaex}{\Sigma_{\mathrm ex}^8}
\newcommand{\Th}{\textup{Th}}
\newcommand{\PT}{\textup{PT}}
\newcommand{\Ker}{\textup{Ker}}
\newcommand{\Map}{\textup{Map}}
\renewcommand{\Im}{\textup{Im}}
\newcommand{\qsnmo}{QS^0\!/SO}
\theoremstyle{plain}
\newtheorem{theorem}{Theorem}[section]
\newtheorem{thm}[theorem]{Theorem}
\newtheorem{lemma}[theorem]{Lemma}
\newtheorem{lem}[theorem]{Lemma}
\newtheorem{corollary}[theorem]{Corollary}
\newtheorem{proposition}[theorem]{Proposition}
\newtheorem{prop}[theorem]{Proposition}
\newtheorem{conjecture}[theorem]{Conjecture}
\newtheorem{conj}[theorem]{Conjecture}
\theoremstyle{definition}
\newtheorem{definition}[theorem]{Definition}
\newtheorem{defin}[theorem]{Definition}
\newtheorem{example}[theorem]{Example}
\newtheorem{notation}[theorem]{Notation}
\newtheorem{const}[theorem]{Construction}
\theoremstyle{remark}
\newtheorem{remark}[theorem]{Remark}
\newtheorem{rem}[theorem]{Remark}
\title{The smooth classification of $4$-dimensional complete intersections}
\author[D. Crowley]{Diarmuid Crowley}
\address{School of Mathematics and Statistics,
University of Melbourne,
Parkville, VIC, 3010, Australia}
\email{dcrowley@unimelb.edu.au}
\author[Cs. Nagy]{Csaba Nagy}
\address{School of Mathematics and Statistics, University of Glasgow, United Kingdom}
\email{csaba.nagy@glasgow.ac.uk}
\date{\today}
\begin{document}

\begin{abstract}
We prove the ``Sullivan Conjecture'' on the classification of $4$-dimensional complete intersections up to diffeomorphism. Here an $n$-dimensional complete intersection is a smooth complex variety formed by the transverse intersection of $k$ hypersurfaces in $\C P^{n+k}$. 

Previously Kreck and Traving proved the $4$-dimensional Sullivan Conjecture when $64$ divides the total degree (the product of the degrees of the defining hypersurfaces) and Fang and Klaus proved that the conjecture holds up to the action of the group of homotopy $8$-spheres $\Theta_8 \cong \Z/2$.

Our proof involves several new ideas, including the use of the Hambleton-Madsen theory of degree-$d$ normal maps, which provide a fresh perspective on the Sullivan Conjecture in all dimensions. This leads to an unexpected connection between the Segal Conjecture for $S^1$ and the Sullivan Conjecture.
\end{abstract}

\maketitle

\section{Introduction} 
\label{s:intro}

\subsection{Complete intersections and the Sullivan Conjecture} 
\label{ss:SC}

A {\em complete intersection} $X_n(\dd) \subset \C P^{n+k}$ is the transverse intersection of $k$ complex hypersurfaces of degrees 
$\dd = \{d_1, \ldots, d_k\}$.  
We regard $X_n(\dd)$ as an oriented smooth manifold of real dimension $2n$ and consider the problem of classifying complete intersections up to orientation preserving diffeomorphism. Hence throughout this paper, all manifolds are oriented and all diffeomorphisms and homeomorphisms are assumed to preserve orientations. 
By an observation of Thom, the diffeomorphism type of $X_n(\dd)$ depends only on the {\em multidegree} $\dd$. 

The main conjecture organising the classification of complete intersections for $n \geq 3$ is the ``Sullivan Conjecture''. The statement of the conjecture relies on the following fact (see Remark \ref{rem:char}): There are integers $p_i(n,\dd)$ such that the Pontryagin classes of $X_n(\dd)$ satisfy $p_i(X_n(\dd)) = p_i(n,\dd) x^{2i}$, where $x \in H^2(X_n(\dd))$ is the pullback of a generator of $H^2(\C P^{n+k})$. 
Let $d := d_1 \cdots d_k$ denote the 
{\em total degree of $X_n(\dd)$,} which is the product of the individual degrees. 

\begin{defin}
The {\em Sullivan data} associated to the complete intersection $X_n(\dd)$ is the tuple
\[
SD_n(\dd) := \bigl( d, (p_i(n,\dd))_{i=1}^{\lfloor n/2 \rfloor}, \chi(X_n(\dd)) \bigr) \in \Z^+ \times \Z^{\lfloor n/2 \rfloor} \times \Z \text{\,,}
\] 
which consists of the total degree $d$, the Pontryagin classes of $X_n(\dd)$ regarded as integers and the Euler characteristic of $X_n(\dd)$. 
For a fixed $n$, each of these integers is a polynomial function of the individual degrees; see Section \ref{ss:SD}.
\end{defin}

\begin{conj}[The Sullivan Conjecture]
Suppose that $n \geq 3$ and $X_n(\dd)$ and $X_n(\dd')$ are complete intersections. 
If $SD_n(\dd) = SD_n(\dd')$, then $X_n(\dd)$ is diffeomorphic to $X_n(\dd')$.
\end{conj}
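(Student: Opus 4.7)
The plan is to attack this via surgery theory, specifically Kreck's modified surgery combined with the Hambleton-Madsen theory of degree-$d$ normal maps highlighted in the abstract. First, I would reduce the problem to homotopy equivalence by invoking a Libgober-Wood type theorem: for $n \geq 3$, the Lefschetz hyperplane theorem determines the cohomology of $X_n(\dd)$ in degrees $<n$ in terms of the total degree, the intersection form on the middle cohomology is pinned down by $d$ and $\chi$, and the remaining minimal CW structure is controlled by the Pontryagin classes. Thus matching Sullivan data forces an oriented homotopy equivalence $f \colon X_n(\dd) \to X_n(\dd')$.

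Next, I would upgrade $f$ to a degree-one normal map in the Hambleton-Madsen framework of degree-$d$ normal maps. The embedding $X_n(\dd) \subset \C P^{n+k}$ equips the normal bundle with a canonical degree-$d$ structure, and matching Pontryagin classes constrains the normal invariant of $f$ to a controlled subset of $[X_n(\dd), F/O]$. The novel input is the stable splitting associated with the Segal Conjecture for $S^1$: it identifies the relevant component of the normal data with a piece of the stable cohomotopy of $X_n(\dd)$ via $\qsnmo$, and shows that the degree-$d$ normal invariant is determined by $SD_n(\dd)$ up to an indeterminacy that does not alter the diffeomorphism type.

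Finally, I would vanish the surgery obstruction on the resulting normal bordism. In real dimensions $2n \equiv 0 \pmod 4$, Hirzebruch's signature formula expresses $\sigma(X_n(\dd))$ as a polynomial in the $p_i(n,\dd)$, so matched Pontryagin data forces equal signatures, hence vanishing $L$-theoretic obstruction. In dimensions $2n \equiv 2 \pmod 4$, I would compute the Arf-Kervaire invariant using the degree-$d$ normal structure and divisibility of characteristic numbers. The resulting normal bordism can then be surgered to an $h$-cobordism and, by simple connectivity and the $s$-cobordism theorem, to a diffeomorphism.

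The main obstacle will be removing the ambiguity from the group $\Theta_{2n}$ of homotopy spheres. Fang-Klaus already established the conjecture modulo this action, so the crux is proving that no exotic sphere can be connect-summed onto a complete intersection while preserving its Sullivan data. This requires a precise computation of the inertia group of $X_n(\dd)$, which is delicate in exactly those dimensions $2n = 8, 16, \ldots$ where $\Theta_{2n}$ is nontrivial. The real-dimension-$8$ case treated in this paper is the first and emblematic instance, where the interplay between $\Theta_8 \cong \Z/2$, the degree-$d$ normal structure, and the Segal Conjecture for $S^1$ becomes unavoidable; the general case would proceed by an inductive extension of the same detection principle to higher-dimensional Kervaire-Milnor generators.
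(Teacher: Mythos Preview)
The statement you are addressing is a \emph{conjecture}, not a theorem proven in the paper; the paper establishes only the case $n=4$ (Theorem~\ref{thm:main}), and the general case remains open. Your proposal is therefore not competing with a proof in the paper---there is none---and it does not itself constitute a proof: your final paragraph explicitly concedes that the central difficulty, eliminating the $\Theta_{2n}$-ambiguity, is unresolved and gestures at an ``inductive extension of the same detection principle'' without supplying one.

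Beyond this structural issue, several steps in your outline do not work as stated. First, the reduction ``matching Sullivan data forces an oriented homotopy equivalence'' is not how the argument runs and is not obviously true: the intersection form on $H^n$ is not determined by $d$ and $\chi$ alone in general, and the paper never passes through a homotopy equivalence---it applies Kreck's modified surgery directly via bordism of normal $(n{-}1)$-smoothings (Proposition~\ref{prop:classification_general}). Second, the paper's $n=4$ proof is not a single surgery argument but a four-way case split on the Wu classes $v_2, v_4$ and the parity of $d$, and the methods differ sharply across cases: the spin case is settled by a framed-bordism invariant of a circle bundle (Section~\ref{s:spin}), the non-spin $v_4=0$ case by the computation $\Tors\,\Omega^{O\an{7}}_8(\C P^1;\xi)\cong\Z/4$ (Lemma~\ref{lem:Omega_8(CP1)}), the non-spin $v_4\neq 0$ even-$d$ case by combining that $\Z/4$ with Kreck--Traving's Adams filtration argument, and only the odd-$d$ case uses Hambleton--Madsen and the Segal Conjecture. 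Your proposal conflates these into one mechanism. Third, Fang--Klaus is a result specifically in dimension $4$; invoking it as a general reduction mod $\Theta_{2n}$ is not available for arbitrary $n$.
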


The main result of this paper is that the Sullivan Conjecture holds in complex dimension $4$.

\begin{theorem} \label{thm:main}
Suppose that $X_4(\dd)$ and $X_4(\dd')$ are complete intersections with $SD_4(\dd) = SD_4(\dd')$. Then $X_4(\dd)$ is diffeomorphic to $X_4(\dd')$.
\end{theorem}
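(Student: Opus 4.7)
The plan is to combine the Kreck modified-surgery framework, as used in the earlier work of Kreck-Traving and Fang-Klaus, with the Hambleton-Madsen theory of degree-$d$ normal maps, which the abstract identifies as the decisive new ingredient.

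First, we would set up the common normal $3$-type $\xi : B \to BO$ of $X_4(\dd)$ and $X_4(\dd')$; the hypothesis $SD_4(\dd)=SD_4(\dd')$, which packages $d$, $p_1$, $p_2$ and the Euler characteristic, is precisely what is needed to place both manifolds into a single structure set $\mathcal{S}^{B}$ of normal $B$-smoothings modulo normal $B$-bordism. The classification problem then translates into understanding the action on $\mathcal{S}^{B}$ of the monoid of fibre self-equivalences of $\xi$ modulo the surgery obstruction group $L_9(\Z) = 0$. By Fang-Klaus the classical (degree-$1$) surgery theory leaves an ambiguity of at most $\Theta_8 \cong \Z/2$, so Theorem~\ref{thm:main} reduces to the inertia-type statement that $X_4(\dd) \cong X_4(\dd) \# \Sigmaex$ for every multidegree $\dd$.

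To kill this residual $\Theta_8$-ambiguity, we would pass from degree-$1$ to degree-$d$ normal maps in the sense of Hambleton-Madsen. This is geometrically adapted to complete intersections because $X_n(\dd)$ carries a natural degree-$d$ map to $\C P^n$ (via linear projection from a generic $\C P^{k-1} \subset \C P^{n+k}$ disjoint from $X$), and the degree-$d$ surgery exact sequence enlarges both the normal invariant set and the surgery obstruction group accordingly. The strategy is then to realise $\Sigmaex$ as the end-effect of a degree-$d$ normal $B$-bordism on $X_4(\dd)$ whose surgery obstruction vanishes, producing the desired diffeomorphism $X_4(\dd) \cong X_4(\dd) \# \Sigmaex$.

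The main obstacle, and the source of the unexpected connection with the Segal Conjecture for $S^1$, will be the computation of this enlarged normal invariant group. Since the target $\C P^n$ factors through $\C P^\infty = BS^1$, transfer along the degree-$d$ map feeds into the stable cohomotopy of $(BS^1)_+$, whose structure after completion at the augmentation ideal is the content of the Segal Conjecture for $S^1$. The heart of the proof is likely to be a stable-homotopy calculation using this conjecture to show that the class of $\Sigmaex$ in the degree-$d$ obstruction group is killed by an element represented by an explicit normal bordism from $X_4(\dd)$ to $X_4(\dd) \# \Sigmaex$. Linking the stable-homotopy input to the geometric surgery obstruction, while keeping control of the action of the fibre self-equivalence monoid on $\mathcal{S}^{B}$ so as not to reintroduce exotic-sphere ambiguity, is where the bulk of the technical work will lie.
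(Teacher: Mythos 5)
Your proposal founders on its central reduction: it is not true that Theorem \ref{thm:main} reduces to the statement that $X_4(\dd) \cong X_4(\dd) \sharp \Sigmaex$ for every multidegree, and indeed that statement is false. Two counterexamples: when $X_4(\dd)$ is spin, the paper proves that $X_4(\dd)\sharp\Sigmaex$ is \emph{not} diffeomorphic to $X_4(\dd)$ (nor to any other complete intersection); and by a theorem of Kasilingam, $\C P^4 = X_4(1)$ is not diffeomorphic to $\C P^4 \sharp \Sigmaex$, so $\Theta$-rigidity also fails for some odd total degrees. The correct dichotomy, given the Fang--Klaus theorem, is that the Sullivan Conjecture holds for $X_4(\dd)$ if and only if $X_4(\dd)$ is either $\Theta$-rigid \emph{or} strongly $\Theta$-flexible, meaning $X_4(\dd)\sharp\Sigmaex$ is not diffeomorphic to any complete intersection at all. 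Your plan to ``realise $\Sigmaex$ as the end-effect of a degree-$d$ normal bordism with vanishing obstruction, producing a diffeomorphism $X_4(\dd)\cong X_4(\dd)\sharp\Sigmaex$'' would therefore be attempting to prove a false statement in the spin case and for $\C P^4$. The actual proof splits into cases by the Wu classes $v_2$, $v_4$ and the parity of $d$: in the spin case one shows strong $\Theta$-flexibility via a framed-bordism invariant of the circle bundle $S(X_4(\dd),x)$ (it is framed null-cobordant for a complete intersection but never for $X_4(\dd)\sharp\Sigmaex$); in the non-spin case with $v_4=0$ one shows $\Theta$-rigidity via the computation $\Tors\Omega^{O\an{7}}_8(\C P^1;\xi)\cong\Z/4$, which forces the image of $\Sigmaex$ in the relevant bordism group to vanish; in the non-spin case with $v_4\neq 0$ and $d$ even one combines that $\Z/4$ computation with the Kreck--Traving Adams filtration argument (using $16\mid d$); and only in the odd-$d$ case does one invoke Hambleton--Madsen and the Segal Conjecture --- and even there one does not prove $\Theta$-rigidity, but instead shows directly that the canonical degree-$d$ normal invariants $\eta_4(\dd)$ and $\eta_4(\dd')$ coincide, which suffices for the modified surgery argument.

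A second, more technical obstruction to your uniform strategy: passing from degree-$d$ normal invariants to the Segal Conjecture uses the Brumfiel--Madsen equivalence $((\qsnmo)_d)_{(2)}\simeq (G/O)_{(2)}$, which is only available after inverting $d$ and hence only helps $2$-locally when $d$ is odd. Since the residual ambiguity $\Theta_8\cong\Z/2$ is a $2$-primary problem, the degree-$d$ machinery cannot be run over all multidegrees as you propose; for even $d$ the other arguments are genuinely needed. What is right in your proposal is the mechanism by which the Segal Conjecture enters: the canonical degree-$d$ normal invariant of $X_n(\dd)$ is the restriction of a class defined on all of $\C P^\infty=BS^1$, and Feshbach's theorem (that $\pi^0_s(\C P^\infty)$ is trivial, whence $[\C P^\infty,\coker(J)_{(2)}]=0$) kills the $\coker(J)_{(2)}$ factor of the $2$-localised normal invariant, leaving only the $(BSO)_{(2)}$ factor, which is determined by the Sullivan data.
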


\subsection{Background and an application} \label{ss:background}

We first list some existing results about the Sullivan Conjecture, its analogue in dimensions $n < 3$
and its converse.

When $n = 1$, $X_1(\dd)$ is an oriented surface and the classification is classical (in particular the Sullivan Conjecture holds but its converse does not).

When $n = 2$, $X_2(\dd)$ is a simply-connected smooth manifold and smooth classification results are currently out of reach.  However, the topological classification can be deduced from results of Freedman \cite{Fr}: Two complete intersections are homeomorphic if and only if they have the same Pontryagin class $p_1$ and the same Euler characteristic. The converse fails, because the total degree is not even a diffeomorphism invariant (e.g.\ $X_2(4)$, $X_2(3,2)$ and $X_2(2,2,2)$ are all K3-surfaces.)

When $n \geq 3$, the converse of the Sullivan Conjecture holds; see Proposition \ref{prop:SC-conv}.

If $n = 3$ the Sullivan Conjecture follows from classification theorems of Wall or Jupp \cite{Wa1, J}.

If $n = 4$, Fang and Klaus \cite[Remark 2]{F-K} proved that the Sullivan Conjecture holds up to connected sum with homotopy $8$-spheres: 

\begin{thm}[Fang and Klaus \cite{F-K}] \label{thm:FK}
Suppose that $X_4(\dd)$ and $X_4(\dd')$ are complete intersections with $SD_4(\dd) = SD_4(\dd')$. Then there is a homotopy $8$-sphere $\Sigma$ such that $X_4(\dd')$ and $X_4(\dd) \sharp \Sigma$ are diffeomorphic.
\end{thm}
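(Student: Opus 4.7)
The plan is to apply Kreck's modified surgery theory to the normal $2$-type of $X_4(\dd)$. Since $X_4(\dd)$ is simply-connected with $\pi_2 \cong \Z$ generated by the hyperplane class $x$, its normal $2$-type is a fibration $B \to B\mathrm{O}$ whose $2$-type is $K(\Z, 2)$; it takes one of two forms depending on whether $X_4(\dd)$ is Spin, a condition which is a mod-$2$ function of $\dd$. I would first check that Spin-ness is determined by $SD_4(\dd)$ so that $X_4(\dd)$ and $X_4(\dd')$ admit canonical lifts $\tilde \nu, \tilde \nu'$ to the same $B$.

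Next, I would show that the bordism classes $[X_4(\dd), \tilde \nu]$ and $[X_4(\dd'), \tilde \nu']$ in $\Omega_8(B)$ agree. The Atiyah--Hirzebruch spectral sequence detects this class via Pontryagin numbers $\an{x^4,[X]}=d$, $\an{p_1 x^2,[X]}=p_1(4,\dd)\,d$, $\an{p_1^2,[X]}=p_1(4,\dd)^2 d$ and $\an{p_2,[X]}=p_2(4,\dd)\,d$, together with analogous Stiefel--Whitney numbers; each of these is a polynomial function of $SD_4(\dd)$.

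Agreement of bordism classes furnishes a normal bordism $(W;X_4(\dd), X_4(\dd'))$, on which surgery below the middle dimension produces a relatively $4$-connected cobordism. The remaining obstruction lies in Wall's group $L_8(\Z) \cong \Z$ and is essentially the signature difference; this is polynomial in $p_1, p_2$ via Hirzebruch's formula, so it vanishes. Completing middle-dimensional surgery yields an $h$-cobordism --- equivalently an $s$-cobordism since $\pi_1 = 0$ --- between $X_4(\dd')$ and $X_4(\dd)\sharp \Sigma$ for some $\Sigma \in \Theta_8 \cong \Z/2$, and the $h$-cobordism theorem supplies the desired diffeomorphism.

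The main obstacle is the second step. In the non-Spin case the fibration $B \to B\mathrm{O}$ is nontrivially twisted and $\Omega_8(B)$ carries $2$-torsion whose differentials and detecting characteristic numbers demand careful bookkeeping, particularly those involving the class $x$ mixed with Stiefel--Whitney classes. It is worth noting that the $\Theta_8$-ambiguity is intrinsic to this strategy: the bordism class in $\Omega_8(B)$ is invariant under connected sum with a homotopy $8$-sphere, so modified surgery alone cannot distinguish $X_4(\dd)$ from $X_4(\dd) \sharp \Sigma$, which is precisely why the Fang--Klaus approach leaves the $\Theta_8$-orbit undetermined.
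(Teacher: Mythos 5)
Your overall strategy (Kreck's modified surgery) is the right one and is indeed how Fang and Klaus proceed (the present paper only cites their theorem, describing the method in Section \ref{ss:surgery}), but two of your steps contain genuine errors. First, for an $8$-manifold the relevant normal type is the normal $3$-type, not the normal $2$-type: Kreck's theorem requires normal $(q{-}1)$-smoothings of a $2q$-manifold, i.e.\ $4$-equivalences to a $4$-coconnected fibration over $BO$. The correct $B$ is ${\C}P^\infty \times BO\an{8} = {\C}P^\infty \times BString$ equipped with $\xi_4(\dd) \times \gamma_{BO\an{8}}$; this records not just $w_2$ but also the degree-$4$ normal data ($p_1$ and $w_4$), and even identifying the normal $3$-types of $X_4(\dd)$ and $X_4(\dd')$ requires Sanderson's result that equal Pontryagin classes force $\xi_4(\dd)\big|_{{\C}P^4} \cong \xi_4(\dd')\big|_{{\C}P^4}$ --- this is strictly more than checking spin-ness. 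A $B$ whose only bundle content is $w_2$ is the setup appropriate for $6$-manifolds, and Kreck's machinery does not apply to $8$-manifolds over it.

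Second, and more seriously, the bordism group $\Omega^{O\an{7}}_8({\C}P^\infty; \xi_4(\dd))$ is \emph{not} detected by characteristic numbers: it contains torsion (the image of $\Theta_8 \cong \Z/2$, and in the non-spin case $\Tors \Omega^{O\an{7}}_8({\C}P^1; \xi) \cong \Z/4$ is detected by a codimension-$2$ Arf invariant) invisible to Pontryagin and Stiefel--Whitney numbers. So your second step cannot show the bordism classes agree; what Fang and Klaus actually prove, by computing $\Omega^{O\an{7}}_8({\C}P^\infty, \ast; \xi)$, is that the \emph{difference} of the classes lies in the image of $\Theta_8 \to \Omega^{O\an{7}}_8({\C}P^\infty; \xi_4(\dd))$, and this is precisely where the homotopy sphere $\Sigma$ in the statement comes from. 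Your closing remark that the bordism class is invariant under connected sum with a homotopy sphere is also false in general: connected sum with $\Sigma$ changes the class by the image of $[\Sigma]$ under this map, which is sometimes injective --- that is exactly why some complete intersections are $\Theta$-flexible, and why the $\Theta_8$-ambiguity cannot be explained the way you suggest. A minor further point: in Kreck's framework the final obstruction lives in his monoid $l_9$ rather than in $L_8(\Z)$, and it is the Euler characteristic hypothesis together with \cite[Proposition 10]{Kr} that upgrades stable diffeomorphism to diffeomorphism.
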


If $5 \leq n \leq 7$, then Fang and Wang \cite{F-W} proved that the Sullivan Conjecture holds up to homeomorphism. 

For $n \geq 3$, Kreck and Traving proved the following general statement.
Let $\nu_p(d)$ be the largest integer such that $p^{\nu_p(d)} \mid d$.  
If $SD_n(\dd) = SD_n(\dd')$ and $\nu_p(d) \geq \frac{2n+1}{2(p{-}1)}+1$ for every prime $p$ with $p(p{-}1) \leq n{+}1$, then $X_n(\dd)$ and $X_n(\dd')$ are diffeomorphic \cite[Theorem A]{Kr}.
If $n = 4$, then the condition says that $64 \mid d$.

A motivation for the diffeomorphism classification of complete intersections is the result of Libgober and Wood \cite[Corollary 8.3]{L-W}, which says that if $n \geq 3$ and 
diffeomorphic complete intersections have different multidegrees, 
then their complex structures lie in different connected components of the moduli space of complex structures on the underlying smooth manifold.  
Here and in general, multidegrees are regarded as equal 
if one can be obtained from the other by adding or removing $1$s,
because then the corresponding complete intersections have a common representative.  
Libgober and Wood used this result to show that for all odd $n \geq 3$ there are complete intersections
having a complex moduli space with arbitrarily many connected components.
Their proof relied on a counting argument, valid in all dimensions, which shows that the sets $\{ \dd' \mid SD_n(\dd') = SD_n(\dd) \}$ of multidegrees with the same Sullivan data can be arbitrarily large.

In future work we give an effective algorithm for finding pairs of multidegrees with the same Sullivan data. The Sullivan Conjecture then allows us to construct explicit examples of complete intersections in different components of the complex moduli space and we obtain the following application of Theorem \ref{thm:main}.

\begin{example} \label{ex:odd}
The complete intersections $X_4(3^{(150)},7^{(89)},9^{(65)},15,25^{(130)})$ and $X_4(5^{(261)},21^{(89)},27^{(64)})$ (where $3^{(150)}$ stands for $150$ copies of $3$, etc.) 
are diffeomorphic by Theorem \ref{thm:main} and the formulae in Section \ref{ss:SD}.
Hence the corresponding complex structures lie in different 
components of the complex moduli space.
\end{example}

\subsection{The outline of the proof of Theorem \ref{thm:main}} \label{ss:Proof} 
If $SD_4(\dd) = SD_4(\dd')$, then by Theorem \ref{thm:FK} of Fang and Klaus
there is a diffeomorphism $X_4(\dd) \to X_4(\dd') \sharp \Sigma$ 
for some homotopy sphere $\Sigma$.
The group of homotopy $8$-spheres, $\Theta_8 \cong \Z/2$, is known by Kervaire and Milnor \cite{K-M} 
and so we let $\Sigmaex$ denote the unique diffeomorphism class of the exotic $8$-sphere and introduce
the following terminology.

\begin{definition} \label{def:strongly_flexible_etc}
\quad 
\begin{compactitem} 
\item An $8$-manifold $M$ is {\em $\Theta$-rigid} if $M \sharp \Sigmaex$ is diffeomorphic to $M$.
\item An $8$-manifold $M$ is {\em $\Theta$-flexible} if $M \sharp \Sigmaex$ is not diffeomorphic to $M$.
\item A complete intersection $X_4(\dd)$ is {\em strongly $\Theta$-flexible} if $X_4(\dd) \sharp \Sigmaex$ is not diffeomorphic to a 
complete intersection.
\end{compactitem}
\end{definition}

As our proof of Theorem \ref{thm:main} involves treating several cases separately, we shall say that {\em the Sullivan Conjecture holds for a fixed complete intersection $X_n(\dd)$} if, for every $\dd'$, $SD_n(\dd) = SD_n(\dd')$ implies that $X_n(\dd')$ is diffeomorphic to $X_n(\dd)$. 
By Theorem~\ref{thm:FK} and Remark~\ref{rem:SD-equal}, the Sullivan Conjecture holds for $X_4(\dd)$ if and only if $X_4(\dd)$ is either $\Theta$-rigid or strongly $\Theta$-flexible.
To prove the $4$-dimensional Sullivan Conjecture we consider four cases, which are indexed by 
the Wu classes of $X_4(\dd)$ and the parity of the total degree:
\[
\begin{array}{c|c|c|c|c}
\vphantom{\gtreqqless} v_2(X_4(\dd)) & v_4(X_4(\dd)) & d~\mathrm{mod}~2 & \text{$\Theta$-rigidity} & \text{Treated in} \\
\hline \hline
\vphantom{\bar{\gtreqqless}} 0 & - & - & \text{Strongly $\Theta$-flexible} & \text{Theorem \ref{thm:main-p1}} \\
\hline
\vphantom{\bar{\gtreqqless}} 1 & 0 & - & \text{$\Theta$-rigid} & \text{Theorem \ref{thm:main-p2}} \\
\hline
\vphantom{\bar{\gtreqqless}} 1 & 1 & 0 & \text{Unknown in general} & \text{Theorem \ref{thm:main-p3}\,(a)} \\
\hline
\vphantom{\bar{\gtreqqless}} 1 & 1 & 1 & \text{Unknown in general} & \text{Theorem \ref{thm:main-p3}\,(b)}
\end{array}
\]
Here
$v_i(X_4(\dd)) \in H^i(X_4(\dd); \Z/2)$ is the $i^{\text{th}}$ Wu class of $X_n(\dd)$, which can be regarded as an element of $\Z/2$ by Remark \ref{rem:char}, a $``-"$ indicates the value of the invariant is not relevant in that case and in the cases when the $\Theta$-rigidity of $X_4(\dd)$ is unknown, we conjecture that it depends on $p_1(4,\dd)$ mod $8$; see Conjecture \ref{conj:inertia}. 

Now we discuss the proof in each of the four cases.

For a spin complete intersection $X_4(\dd)$ (equivalently, by Proposition \ref{prop:SW-classes}, when $v_2(X_4(\dd))=0$) we find a diffeomorphism invariant property of complete intersections not shared by $X_4(\dd) \sharp \Sigmaex$; see Section \ref{s:spin}. 
Namely, if $S(X, \alpha)$ denotes the total space of the circle bundle over a space $X$ with first Chern class $\alpha$, then $S(X_n(\dd), \pm x)$ admits a framing making it a null-cobordant framed $(2n{+}1)$-manifold (for any $X_n(\dd)$), whereas $S(X_4(\dd) \sharp \Sigmaex; \pm x)$ does not (for a spin $X_4(\dd)$). 
Hence (see Theorem \ref{thm:spin}) we have 

\begin{theorem} \label{thm:main-p1}
If $X_4(\dd)$ is spin, then $X_4(\dd)$ is strongly $\Theta$-flexible. In particular, the Sullivan Conjecture holds for $X_4(\dd)$.
\end{theorem}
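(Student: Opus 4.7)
The plan is to construct a stable framed-bordism obstruction that vanishes on the circle bundle $S(X,\pm x)$ of every complete intersection $X = X_n(\dd)$ but is non-zero for $S(X_4(\dd) \sharp \Sigmaex, \pm x)$ when $X_4(\dd)$ is spin; here $S(X,\alpha)$ denotes the unit circle bundle of a complex line bundle with first Chern class $\alpha$, and $x \in H^2(X)$ is the hyperplane class. Given such an invariant, the theorem follows: any diffeomorphism from $X_4(\dd) \sharp \Sigmaex$ to a complete intersection $X_4(\dd'')$ would force the invariant to vanish on $S(X_4(\dd'') ,\pm x)$, contradicting its non-vanishing. Combined with Theorem~\ref{thm:FK}, this yields the Sullivan Conjecture for $X_4(\dd)$.

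The first step is to produce the null-cobordism for any complete intersection. The Hopf fibration $\pi \colon S^{2N+1} \to \C P^{N}$ (with $N = n+k$) identifies $\pi^{-1}(X_n(\dd))$ with $S(X_n(\dd), -x)$, and the normal bundle of this embedding is the pullback $\pi^*\bigl(\bigoplus_i \mathcal{O}(d_i)\bigr)$. Because the Gysin sequence forces $\pi^* x = 0$, each pulled-back summand is trivialisable, and together with the bounding framing of $S^{2N+1} = \partial D^{2N+2}$ this gives a canonical stable framing of $S(X_n(\dd), \pm x)$. The Milnor fibre of the isolated complete-intersection singularity at the origin of the affine cone over $X_n(\dd)$ in $\mathbb{C}^{N+1}$ is a parallelisable $(2n{+}2)$-manifold whose boundary is this circle bundle, and its induced boundary framing agrees with the canonical one up to the $\Image(J)$ ambiguity. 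Hence $[S(X_n(\dd), \pm x)]_{\mathrm{fr}} = 0$ in $\pi_{2n+1}^s / \Image(J)$.

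The second step is to show non-vanishing for $Y := X_4(\dd) \sharp \Sigmaex$. Since $H^2(\Sigmaex) = 0$, the line bundle with Chern class $x$ restricts trivially to $\Sigmaex \setminus D^8$, so $S(Y, \pm x)$ is the fibred connect-sum of $S(X_4(\dd), \pm x)$ with $\Sigmaex \times S^1$ along a fibre $S^1$. In $\pi_9^s / \Image(J)$ this computes as
\[
[S(Y, \pm x)]_{\mathrm{fr}} \;=\; [S(X_4(\dd), \pm x)]_{\mathrm{fr}} \;+\; \eta \cdot [\Sigmaex]_{\mathrm{fr}},
\]
where $\eta \in \pi_1^s$ is represented by the Lie-framed gluing circle. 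By Kervaire--Milnor the map $\Theta_8 \to \pi_8^s / \Image(J)$ is injective, sending $\Sigmaex$ to a non-zero class, and inspection of the stable stems in degree nine shows that its product with $\eta$ remains non-zero in $\pi_9^s / \Image(J)$. Combined with the vanishing from the first step, this gives $[S(Y, \pm x)]_{\mathrm{fr}} \neq 0$, so $Y$ cannot be diffeomorphic to any complete intersection.

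The main obstacle is the framing book-keeping for the fibred connect-sum across the gluing region $S^7 \times S^1 \subset S(Y, \pm x)$. The canonical framing on $S(X_4(\dd), \pm x)$ restricted to a fibre-over-a-disk $D^8 \times S^1$ must be compared with the product framing coming from $\Sigmaex \times S^1$, and one needs the discrepancy to lie in $\Image(J)$ so that the exotic contribution $\eta \cdot [\Sigmaex]_{\mathrm{fr}}$ is not absorbed. The spin hypothesis on $X_4(\dd)$ is essential at exactly this point: it ensures that the relevant spin structures on the gluing region agree up to the relevant indeterminacy. In the non-spin case the extra discrepancy from $w_2(X_4(\dd)) \neq 0$ could absorb the exotic contribution, which is why the remaining three cases in the table must be handled by different arguments in Theorems~\ref{thm:main-p2} and~\ref{thm:main-p3}.
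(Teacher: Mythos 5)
Your overall strategy coincides with the paper's: the distinguishing invariant is the framed bordism class of the circle bundle $S(X,\pm x)$; it vanishes for every complete intersection because $S(X_n(\dd),\pm x)$ bounds a stably parallelisable manifold (your Milnor fibre of the affine cone is a cosmetic variant of the paper's choice, the complement in $X_{n+1}(\dd)$ of a tubular neighbourhood of the hyperplane section $X_n(\dd)$); and for $X_4(\dd)\sharp\Sigmaex$ with $X_4(\dd)$ spin the connected sum contributes $\eta\cdot[\Sigmaex]$, which is $\eta\varepsilon$ or $\eta\varepsilon+\eta^2\sigma$ and lies outside $\Image J_9$. Your identification of the Lie framing on the gluing circle in the spin case is exactly the content of the paper's Lemma \ref{lem:s1-fr}.

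There is, however, one genuine gap. You use $[S(X,\pm x)]_{\fr}\in\pi^s_9/\Image(J)$ as if it were a diffeomorphism invariant, but a priori it is not: two framings of a $9$-manifold $M$ differ by an element of $[M,SO]$, and the resulting change in framed bordism class need not lie in $J(\pi_9(SO))$ --- it can pick up contributions from lower-dimensional cells of $M$. Your concluding contradiction requires that \emph{every} framing of $S(X_4(\dd)\sharp\Sigmaex,\pm x)$ represent a nonzero class (a hypothetical diffeomorphism to $S(X_4(\dd''),\pm x)$ only transports the bounding framing of the latter to \emph{some} framing of the former), whereas your fibred connected sum exhibits just one framing with nonzero class. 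The missing step is the observation that $S(X_4(\dd),x)$ is $3$-connected (Lefschetz plus the homotopy sequence of the circle fibration), hence homotopy equivalent to a complex with cells only in dimensions $0,4,5,9$; since $\pi_4(SO)=\pi_5(SO)=0$, any two framings differ by an element of $\pi_9(SO)$ and the set of realised framed bordism classes is a single coset of $\Image J_9$. This is Theorem \ref{thm:fr-all} in the paper and must be supplied. A secondary point: your account of where the spin hypothesis enters is slightly misplaced. It is not that ``a discrepancy must lie in $\Image(J)$''; rather, the gluing circle carries the Lie framing precisely when $\langle h([a]),w_2(X)\rangle+\langle h([a]),\varrho_2(x)\rangle=1$, where the second term comes from the twisting of the circle bundle and the spin condition kills the first. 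In the non-spin case the circle is bounding-framed and the contribution $[\Sigmaex\times S^1]$ is literally zero, not merely ``absorbed''.
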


\begin{remark} \label{rem:SC5}
For the $5$-dimensional Sullivan Conjecture, the group of homotopy $10$-spheres 
$\Theta_{10} \cong \Z/2 \times \Z/3$ will play a central role.
We believe that ``transfer'' arguments similar to those we use in the $4$-dimensional spin case
will control the $(\Z/3)$-factor of $\Theta_{10}$.  The $(\Z/2)$-factor of
$\Theta_{10}$ is detected by the $\alpha$-invariant and Baraglia \cite{Ba} has recently computed
the $\alpha$-invariant of spin complete intersections, 
verifying its values are consistent with the Sullivan Conjecture.
We anticipate that these ideas will lead to a proof of the $5$-dimensional Sullivan Conjecture in
future work. 
\end{remark}

In the non-spin cases we apply Kreck's modified surgery theory \cite{Kr}. 
Consider $B_n := {\C}P^\infty \times BO\an{n{+}1}$ with the stable bundle 
$\xi_n(\dd) \times \gamma_{BO\an{n{+}1}}$ over it; for the notation see Definition \ref{def:xi-nd} and Section \ref{ss:surgery}.  
Recall from \cite[Section 8]{Kr} that a {\em normal $(n{-}1)$-smoothing in $(B_n, \xi_n(\dd) \times \gamma_{BO\an{n{+}1}})$} is a pair $(f, \bar f)$ where $f \colon M \to B_n$ is an $n$-connected map from a closed smooth manifold $M$ and $\bar f \colon \nu_M \to \xi_n(\dd) \times \gamma_{BO\an{n{+}1}}$ is a map of stable bundles from the normal bundle of $M$, which covers $f$.
Recall also that the normal $(n{-}1)$-type of $X_n(\dd)$ is $(B_n, \xi_n(\dd) \times \gamma_{BO\an{n{+}1}})$, in particular $X_n(\dd)$ admits a normal $(n{-}1)$-smoothing in $(B_n, \xi_n(\dd) \times \gamma_{BO\an{n{+}1}})$.
In this setting \cite[Proposition 10]{Kr} reduces the Sullivan Conjecture to a statement about bordism classes over $(B_n, \xi_n(\dd) \times \gamma_{BO\an{n{+}1}})$. For our purposes, it is useful to state an altered version of \cite[Proposition 10]{Kr}, which compares a complete intersection $X_n(\dd)$ to a somewhat more general closed $2n$-manifold $X'$. The proof of Proposition \ref{prop:classification_general} is identical to the proof of the sufficient condition of \cite[Proposition 10]{Kr}.

\begin{proposition} \label{prop:classification_general}
Let $n \geq 3$, $X_n(\dd)$ be a complete intersection and $X'$ a closed $2n$-manifold such that 
$\chi(X_n(\dd)) = \chi(X')$ and $X_n(\dd)$ and $X'$ admit bordant normal $(n{-}1)$-smoothings over 
$(B_n, \xi_n(\dd) \times \gamma_{BO\an{n{+}1}})$.
If $\dd \neq \{1\}, \{2\}$ or $\{2, 2\}$, then $X_n(\dd)$ and $X'$ are diffeomorphic.
\hfill $\qed$
\end{proposition}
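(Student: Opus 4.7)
The plan is to carry out the proof of the sufficient direction of \cite[Proposition 10]{Kr} essentially verbatim; the key observation is that Kreck's argument uses only that $X_n(\dd)$ is a complete intersection, and never that its bordism partner is one. By hypothesis there is a $(2n{+}1)$-dimensional bordism $W$ between $X_n(\dd)$ and $X'$, together with a lift of its stable normal bundle to $\xi_n(\dd) \times \gamma_{BO\an{n{+}1}}$ extending the given normal $(n{-}1)$-smoothings on the two boundary components. Since $B_n = \C P^\infty \times BO\an{n{+}1}$ is simply connected and both boundary maps are already $n$-equivalences, a standard surgery-below-the-middle-dimension argument in the interior of $W$, relying only on the Hurewicz and Whitehead theorems, upgrades the reference map $W \to B_n$ to an $n$-equivalence while leaving the boundary unchanged.

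After this step the surgery kernel $K_n(W)$ is a finitely generated free abelian group carrying, via Poincar\'e--Lefschetz duality, a non-degenerate $(-1)^n$-symmetric intersection form; the hypothesis $\chi(X_n(\dd)) = \chi(X')$ combined with the boundary $n$-equivalences forces its rank to be even. Since $n \geq 3$, every class in $K_n(W)$ is represented by a framed embedded $n$-sphere, and the objective is to kill $K_n(W)$ by simultaneous surgery on a Lagrangian half-basis. The obstruction lives in Kreck's middle-dimensional $l$-monoid.

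The main obstacle, and the step that actually uses the assumption $\dd \neq \{1\}, \{2\}, \{2,2\}$, is realising such a Lagrangian geometrically. For every other multidegree Kreck exhibits a pair of disjointly embedded, framed $n$-spheres in $X_n(\dd)$ meeting transversally in a single point and with trivial self-intersections, i.e.\ a geometric hyperbolic pair in the middle-dimensional intersection form of $X_n(\dd)$. Using this pair one can stabilise $W$ by boundary connect-sum with $(S^n \times S^n) \times [0,1]$ along the $X_n(\dd)$ side and then cancel the new summand against the existing pair, producing a bordism with the same $X_n(\dd)$-boundary but whose surgery kernel contains a prescribed hyperbolic summand. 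After finitely many such stabilisations, $K_n(W)$ becomes hyperbolic with a Lagrangian represented by disjointly embedded framed spheres; simultaneous surgery on these spheres converts $W$ into an $h$-cobordism which, by simple connectivity, is an $s$-cobordism, and the $s$-cobordism theorem yields the desired diffeomorphism $X_n(\dd) \cong X'$. The three excluded multidegrees are exactly those in which no such geometric hyperbolic pair exists in $X_n(\dd)$ and the absorption step breaks down.
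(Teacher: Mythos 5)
Your opening move is exactly the paper's: the authors give no independent argument for Proposition \ref{prop:classification_general}, stating only that its proof is identical to the proof of the sufficient condition of \cite[Proposition 10]{Kr}, and your key observation---that Kreck's argument uses the complete-intersection structure of $X_n(\dd)$ but never that of its bordism partner---is precisely the point the paper is making. At that level the proposal is correct and takes the same route.

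Your attempt to reconstruct the inside of Kreck's proof, however, misdescribes the obstruction theory in a way worth flagging. The bordism $W$ has odd dimension $2n+1$, so Poincar\'e--Lefschetz duality does not give a non-degenerate $(-1)^n$-symmetric intersection form on $K_n(W)$, and the problem is not to find a Lagrangian in a middle-dimensional form on $W$; as written, your middle paragraph is the algebra of an even-dimensional surgery problem. In Kreck's setting the obstruction is the element $\theta(W)$ of the monoid $l_{2n+1}$, built from a quadratic form on a relative kernel of $(W, \partial W)$ together with its reference to the boundary, and the goal is to show that $\theta(W)$ is \emph{elementary}, after which $W$ can be surgered rel boundary into an $s$-cobordism. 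The hypothesis $\dd \neq \{1\}, \{2\}, \{2,2\}$ enters through Kreck's elementariness criterion (\cite[Proposition 8]{Kr}, which the paper itself invokes in Remark \ref{rem:cl-gen}): what is needed is that the quadratic form on the kernel $\ker\bigl(H_n(X_n(\dd)) \to H_n(B_n)\bigr)$ contains a hyperbolic plane as an orthogonal summand, and the excluded multidegrees are those for which this kernel is too small or has no such summand. Your ``geometric hyperbolic pair'' and stabilise-and-cancel picture is the right intuition for why that summand absorbs the stable difference, but anyone filling in details should follow \cite[Section 6 and Proposition 10]{Kr} rather than the sketch as stated.
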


\begin{rem} \label{rem:cl-gen}
In fact, the assumption that $\dd \neq \{1\}$ can be removed by applying \cite[Proposition 8\,(i)]{Kr}. We do not know the situation for $\dd = \{2\}, \{2, 2\}$. However, for all three of these exceptional multidegrees $\dd$, it is elementary that $SD_n(\dd) = SD_n(\dd')$ implies $\dd = \dd'$ and so the Sullivan Conjecture holds for these complete intersections.
\end{rem}

The main challenge when applying Proposition \ref{prop:classification_general} is showing that the bordism condition holds; see the discussion in Section \ref{ss:surgery}.  
Note that the bordism group of $8$-manifolds over
$(B_4, \xi_4(\dd) \times \gamma_{BO\an{5}})$ is canonically 
isomorphic to the twisted string bordism group $\Omega^{O\an{7}}_8({\C}P^\infty; \xi_4(\dd))$,
since  $BO\an{5} = BO\an{8} = B(O\an{7})$.

In the case of a non-spin complete intersection $X_4(\dd)$ with $v_4(X_4(\dd)) = 0$, we will use Proposition \ref{prop:classification_general} to compare $X_4(\dd)$ with $X' = X_4(\dd) \sharp \Sigmaex$. 
They admit normal $3$-smoothings over $(B_4, \xi_4(\dd) \times \gamma_{BO\an{8}})$, whose bordism classes differ by the image of $\Sigmaex$ under the canonical homomorphism $i_0 \colon \Theta_8 \to \Omega^{O\an{7}}_8({\C}P^\infty; \xi_4(\dd))$.
The map $i_0$ factors through $\Tors \Omega^{O\an{7}}_8({\C}P^1; \xi_4(\dd)\big|_{\C P^1})$ 
and (see Lemma \ref{lem:Omega_8(CP1)}) we prove

\begin{prop} \label{prop:Z/4}
If $X_4(\dd)$ is non-spin, then $\Tors \Omega^{O\an{7}}_8({\C}P^1; \xi_4(\dd)\big|_{\C P^1}) \cong \Z/4$.
\end{prop}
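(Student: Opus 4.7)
Since $X_4(\dd)$ is non-spin, $w_2(\xi_4(\dd)|_{\C P^1})$ is non-zero and the stable class $[\xi_4(\dd)|_{\C P^1}] \in \widetilde{KO}(\C P^1) \cong \Z/2$ is the non-trivial one. A rank-$2$ representative is the realification of any complex line bundle on $\C P^1$ of odd Chern class; its Thom space is $\C P^2$. Consequently the Thom spectrum $(\C P^1)^{\xi_4(\dd)|_{\C P^1}}$ is stably the mod-$\eta$ Moore spectrum $S/\eta \simeq \Sigma^{-2}\C P^2$, where $\eta$ is the stable Hopf map.

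By Pontryagin--Thom, $\Omega^{O\an 7}_8(\C P^1; \xi_4(\dd)|_{\C P^1})$ is isomorphic to the $O\an 7$-bordism of $S/\eta$. I plan to compute this from the long exact sequence induced by the cofibre sequence $\Sigma S \xrightarrow{\eta} S \to S/\eta$ smashed with the $O\an 7$-Thom spectrum:
\[
\ldots \to \Omega^{O\an 7}_{n-1} \xrightarrow{\eta\cdot} \Omega^{O\an 7}_n \to \Omega^{O\an 7}_n(S/\eta) \to \Omega^{O\an 7}_{n-2} \xrightarrow{\eta\cdot} \Omega^{O\an 7}_{n-1} \to \ldots
\]
Specialising to $n = 8$ and inserting Giambalvo's computations of the low-dimensional values ($\Omega^{O\an 7}_6 \cong \Z/2$, generated by $\nu^2$, and $\Omega^{O\an 7}_7 = 0$), both displayed $\eta$-maps vanish, yielding a short exact sequence
\[
0 \to \Omega^{O\an 7}_8 \to \Omega^{O\an 7}_8(\C P^1; \xi_4(\dd)|_{\C P^1}) \to \Z/2 \to 0.
\]

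The main obstacle is to analyse this extension and the $2$-primary structure of $\Omega^{O\an 7}_8$ in order to read off the torsion subgroup. A Toda-bracket argument (of the form $\langle 2, \eta, \nu^2 \rangle$ in $\pi_*^S$, propagated through the unit map from stable homotopy into $\Omega^{O\an 7}_*$) detects a non-trivial $\eta$-extension that lifts the generator of $\Omega^{O\an 7}_6$ to an element of order $4$ in the bordism group, rather than of order $2$. Combining this non-split $\Z/4$ with the vanishing of any other $2$-primary contributions outside the indicated filtrations yields the claimed $\Tors \Omega^{O\an 7}_8(\C P^1; \xi_4(\dd)|_{\C P^1}) \cong \Z/4$; the hardest step will be pinning down this extension unambiguously, either via an explicit manifold construction or via an Adams spectral sequence computation in the relevant degrees.
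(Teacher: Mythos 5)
Your setup is exactly the paper's: you identify the Thom spectrum as $C_\eta = S^0\cup_\eta D^2$, run the long exact sequence against Giambalvo's values $\Omega^{O\an 7}_6\cong\Z/2$, $\Omega^{O\an 7}_7=0$, and arrive at the short exact sequence $0\to\Omega^{O\an 7}_8\to\Omega^{O\an 7}_8(\C P^1;\xi)\to\Z/2\to 0$. But the step you defer is the entire content of the proposition, and the tool you name for it is wrong. The extension is governed by the bracket $\an{\eta,\nu^2,2}$ (equivalently $\an{2,\nu^2,\eta}$): for a lift $\bar g$ of the generator $\nu^2\in\Omega_6$, one has $2\bar g\in i_*\an{2,\nu^2,\eta}$. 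The bracket you wrote, $\an{2,\eta,\nu^2}$, has the wrong element in the middle slot; it is a different (also defined) bracket, and it is in fact \emph{zero}: by Toda's juggling formula $\an{2,\eta,\nu^2}\subseteq\an{2,\eta,\nu}\nu$ and $\an{2,\eta,\nu}\subset\pi^s_5=0$. So computing your bracket would tell you nothing (or mislead you toward a split extension). The correct evaluation is $\an{\eta,\nu^2,2}=\{\epsilon,\epsilon+\eta\sigma\}$, which one gets from the Jacobi identity $0\in\an{\eta,\nu^2,2}+\an{2,\eta,\nu^2}+\an{\nu^2,2,\eta}$ together with the vanishing just noted and Toda's table value $\an{\nu^2,2,\eta}=\{\epsilon,\epsilon+\eta\sigma\}$.

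There is a second gap in the "propagate through the unit map" step. Even granting the bracket is non-zero in $\pi^s_8$, you must check that its value remains non-zero after passing to $\Omega^{O\an 7}_8$, i.e.\ that $\epsilon$ (and $\epsilon+\eta\sigma$) is non-trivial modulo $\ker(\Omega^{\fr}_8\to\Omega^{O\an 7}_8)=\im J_8=\an{\eta\sigma}$; this also shows $2\bar g$ is torsion, which you need before you can even restrict the short exact sequence to torsion subgroups. The cleanest route (the one the paper takes) is to first compute $\Omega^{\fr}_8(\C P^1;\xi)\cong\pi^s_8(C_\eta)\cong\Z/4$ outright, identify the order-two subgroup $\Z/2([\epsilon])$ with $\coker(J_8)=\Theta_8$, and then push forward along the forgetful map to the twisted string bordism group, where the quotient $\Z/2$ is detected by the codimension-$2$ Arf invariant. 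With the bracket corrected and this comparison made explicit, your argument becomes the paper's.
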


When $v_4(X_4(\dd)) = 0$, we combine Proposition \ref{prop:Z/4} with the computations of \cite[Section 2.2]{F-K} to show that the map $\Theta_8 \to \Omega^{O\an{7}}_8(\C P^\infty; \xi_4(\dd))$ vanishes (Proposition \ref{prop:i_CP1}), which gives (see Theorem \ref{thm:rigid}) 

\begin{theorem} \label{thm:main-p2}
Suppose that $X_4(\dd)$ is a non-spin complete intersection with $v_4(X_4(\dd)) = 0$. If $\dd \neq \{2, 2\}$, then $X_4(\dd)$ is $\Theta$-rigid and so the Sullivan Conjecture holds for $X_4(\dd)$.
\end{theorem}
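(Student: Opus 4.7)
My strategy is to apply Proposition~\ref{prop:classification_general} with $X' := X_4(\dd) \sharp \Sigmaex$ to obtain $\Theta$-rigidity, and then derive the Sullivan Conjecture via Theorem~\ref{thm:FK}. Since $X_4(\dd) \sharp \Sigmaex$ is homeomorphic to $X_4(\dd)$, the Euler characteristic and normal $3$-type hypotheses of Proposition~\ref{prop:classification_general} are automatic, so everything reduces to verifying that the two manifolds are bordant over $(B_4, \xi_4(\dd) \times \gamma_{BO\an{5}})$.

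The difference of their bordism classes equals the image $j(\Sigmaex)$ of the generator of $\Theta_8 \cong \Z/2$ under the canonical map $j \colon \Theta_8 \to \Omega^{O\an{7}}_8(\C P^\infty; \xi_4(\dd))$. Any map from an $8$-dimensional homotopy sphere into $\C P^\infty$ factors through the $2$-skeleton $\C P^1$, and $\Sigmaex$ has vanishing signature and $\hat{A}$-genus, so $j$ factors as
\[
\Theta_8 \longrightarrow \Tors\, \Omega^{O\an{7}}_8(\C P^1; \xi_4(\dd)\big|_{\C P^1}) \xra{i_*} \Omega^{O\an{7}}_8(\C P^\infty; \xi_4(\dd)),
\]
where $i \colon \C P^1 \hookrightarrow \C P^\infty$ is the inclusion. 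By Proposition~\ref{prop:Z/4} the middle group is $\Z/4$, and the first arrow factors through its unique order-$2$ subgroup.

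The crucial step, to be recorded as Proposition~\ref{prop:i_CP1}, is to show that under the hypothesis $v_4(X_4(\dd)) = 0$ the composition $i_* \circ j$ vanishes. For this I would import the explicit bordism-class computations of \cite[Section 2.2]{F-K}: Fang and Klaus identify the relevant bordism classes by characteristic numbers, and the condition $v_4(X_4(\dd)) = 0$ is exactly what forces the image of $\Sigmaex$ to land in the kernel of $i_*$. I expect this step to be the main technical obstacle, since it requires converting between the geometric image of $\Sigmaex$, its characteristic-number avatar in the twisted string bordism group, and the explicit description of $i_*$ in terms of the twist $\xi_4(\dd)$.

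Once this vanishing is in hand, Proposition~\ref{prop:classification_general} produces a diffeomorphism $X_4(\dd) \cong X_4(\dd) \sharp \Sigmaex$ for every multidegree $\dd \notin \{\{1\}, \{2\}, \{2,2\}\}$. Among these exceptions, $X_4(1) \cong \C P^4$ satisfies $v_4 \neq 0$ (by Wu's formula, $v_4(\C P^4) = x^2$) and $X_4(2)$ is spin (its first Chern class is $4x$, hence $v_2 = 0$), so both lie outside the hypotheses of Theorem~\ref{thm:main-p2}, leaving only the claimed exclusion $\dd = \{2,2\}$. Finally, to deduce the Sullivan Conjecture from $\Theta$-rigidity: if $SD_4(\dd) = SD_4(\dd')$, then Theorem~\ref{thm:FK} produces some $\Sigma \in \Theta_8$ with $X_4(\dd') \cong X_4(\dd) \sharp \Sigma$, and the $\Theta$-rigidity just established forces $X_4(\dd) \sharp \Sigma \cong X_4(\dd)$, hence $X_4(\dd') \cong X_4(\dd)$.
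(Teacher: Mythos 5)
Your skeleton coincides with the paper's: reduce to the vanishing of the canonical map $j \colon \Theta_8 \to \Omega^{O\an{7}}_8(\C P^\infty; \xi_4(\dd))$, factor it through $\Tors \Omega^{O\an{7}}_8(\C P^1; \xi_4(\dd)\big|_{\C P^1}) \cong \Z/4$, and feed the result into Proposition~\ref{prop:classification_general}; the handling of the exceptional multidegrees and the final deduction of the Sullivan Conjecture from $\Theta$-rigidity via Theorem~\ref{thm:FK} are also exactly as in the paper. However, the decisive step --- the vanishing of $i_* \circ j$, i.e.\ Proposition~\ref{prop:i_CP1} --- is left open in your write-up: you explicitly defer it as ``the main technical obstacle'' and gesture at identifying the kernel of $i_{\C P^1 *}$ by characteristic numbers. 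That mechanism is not the one that works. The image of $\Sigmaex$ in $\Tors \Omega^{O\an{7}}_8(\C P^1; \xi_4(\dd)\big|_{\C P^1}) \cong \Z/4$ is $2a$ for a generator $a$ (by the non-splitness in Lemma~\ref{lem:Omega_8(CP1)} it is in fact nonzero there), and being a torsion class it carries no characteristic-number information; nor is $2a$ shown to lie in $\ker(i_{\C P^1 *})$ by any computation on $\C P^1$ itself.

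The missing idea is a constraint on the \emph{target}. When $w_2(\xi) \neq 0$ and $w_4(\xi) = 0$, Fang--Klaus compute $\Omega^{O\an{7}}_8(\C P^\infty, \ast; \xi) \cong \Z$; the exact sequence of the pair then shows that $\Omega^{O\an{7}}_8 \to \Omega^{O\an{7}}_8(\C P^\infty;\xi)$ surjects onto the torsion subgroup, and since this map is rationally injective (both sides are detected by the signature), its restriction to $\Tors \Omega^{O\an{7}}_8 \cong \Theta_8 \cong \Z/2$ already surjects onto $\Tors \Omega^{O\an{7}}_8(\C P^\infty;\xi)$. Hence the torsion of the target has exponent $2$. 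The vanishing is then immediate from divisibility: $j(\Sigmaex) = i_{\C P^1 *}(2a) = 2\, i_{\C P^1 *}(a) = 0$ because $i_{\C P^1 *}(a)$ has order at most $2$. This is precisely why Proposition~\ref{prop:Z/4} matters --- the point is the $2$-divisibility of the image of $\Sigmaex$ at the $\C P^1$ stage, not a kernel computation for $i_{\C P^1 *}$. Without supplying this (or an equivalent) argument, your proposal does not yet establish the theorem.
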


\begin{rem} \label{rem:22}
In fact $X_4(2,2)$ is $\Theta$-rigid too. This follows from from results in the second author's PhD thesis \cite[Theorem 4.6.1]{N} but will not be proven here.
\end{rem}

If $X_4(\dd)$ is non-spin, $v_4(X_4(\dd)) = 1$ and the total degree $d$ is even, then $16 \!  \mid \! d$ (see Remark \ref{rem:p-mod-4}). 
We add Proposition \ref{prop:Z/4} to the Adams filtration argument of Kreck and Traving \cite[Section 8]{Kr} and the calculations of 
\cite[Section 2.4]{F-K} to prove (see Proposition \ref{prop:K-T_improved}) Part (a) of the following theorem.

\begin{theorem} \label{thm:main-p3}
Let $X_4(\dd)$ and $X_4(\dd')$ be non-spin complete intersections with $SD_4(\dd) = SD_4(\dd')$ and 
suppose that either 
\begin{compactenum}[(a)]
\item $v_4(X_4(\dd)) \neq 0$ and the total degree $d$ is even, or
\item the total degree $d$ is odd.
\end{compactenum}
Then $X_4(\dd)$ and $X_4(\dd')$ admit bordant normal $3$-smoothings over 
$(B_4, \xi_4(\dd) \times \gamma_{BO\an{8}})$. Consequently, $X_4(\dd)$ and $X_4(\dd')$ 
are diffeomorphic and the Sullivan Conjecture holds for $X_4(\dd)$.
\end{theorem}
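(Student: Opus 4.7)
My plan is to invoke Proposition \ref{prop:classification_general} in both cases, reducing the theorem to showing that $X_4(\dd)$ and $X_4(\dd')$ admit bordant normal $3$-smoothings in $\Omega^{O\an{7}}_8(\C P^\infty; \xi_4(\dd))$; the exceptional multidegrees of Remark \ref{rem:cl-gen} all have $v_4 = 0$ and so do not arise here. After fixing normal $3$-smoothings, let $\delta$ denote the resulting bordism difference. Since $SD_4(\dd) = SD_4(\dd')$, the Pontryagin numbers and the Euler characteristic of the two sides coincide, so $\delta$ lies in the torsion subgroup of $\Omega^{O\an{7}}_8(\C P^\infty; \xi_4(\dd))$, and the task is to show $\delta = 0$, possibly after adjusting the smoothings within their allowed orbit.

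The main technical tool is the Adams-filtration argument of Kreck and Traving \cite[Section 8]{Kr}, sharpened by Proposition \ref{prop:Z/4}. Kreck and Traving filter $\Omega^{O\an{7}}_8(\C P^\infty; \xi_4(\dd))$ by the skeleta of $\C P^\infty$ and bound the associated graded pieces via the Adams spectral sequence, extracting divisibility conditions on $d$ that kill the torsion obstructions. Proposition \ref{prop:Z/4} gives a sharper input at the $\C P^1$-stage: the relevant $2$-primary cyclic group is $\Z/4$ rather than the larger group compatible with the Kreck-Traving bounds. Combined with the explicit characteristic-class formulae of \cite[Section 2.4]{F-K}, this lowers the $2$-primary threshold from $64 \mid d$ to $16 \mid d$. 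For part (a), Remark \ref{rem:p-mod-4} supplies exactly $16 \mid d$, so the improved argument yields $\delta = 0$; this is Proposition \ref{prop:K-T_improved}.

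For part (b), with $d$ odd, I organise the argument prime-locally. At odd primes the Kreck-Traving divisibility hypothesis is vacuous for $n = 4$, since $p(p{-}1) \leq 5$ forces $p = 2$, so no condition on $d$ is imposed away from $2$. At the prime $2$, since $d$ is a unit $2$-locally, multiplication by $d$ acts as an isomorphism on the $2$-local bordism groups, and the multiplication-by-$d$ differentials in the Adams spectral sequence trivialise the $2$-torsion obstructions; the sharp $\Z/4$ bound of Proposition \ref{prop:Z/4} again controls the $\C P^1$-stage, while Sullivan data detects the remaining obstructions at the higher skeleta.

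The principal obstacle is the spectral-sequence bookkeeping required to translate the sharp $\Z/4$ input at the $\C P^1$-stage into a statement about $\delta$ in the full $\C P^\infty$-bordism. One must verify that no residual torsion obstruction survives at the higher skeleta of $\C P^\infty$ beyond those already controlled by the combination of Sullivan data, the $\Z/4$-structure of Proposition \ref{prop:Z/4} and the divisibility (or unit) hypothesis on $d$.
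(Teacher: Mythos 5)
Your treatment of part (a) matches the paper's: Proposition \ref{prop:K-T_improved} runs the Kreck--Traving Adams-filtration argument, with Lemma \ref{lem:adams} (built from Proposition \ref{prop:Z/4} and the Fang--Klaus computation of $\Omega^{O\an{7}}_8(\C P^\infty,\ast;\xi)$) showing that the only torsion class of Adams filtration $\geq 4$ is trivial, while Remark \ref{rem:p-mod-4} supplies $\nu_2(d)\geq 4$. (Two small points: the paper also splits on whether $i_*\colon\Theta_8\to\Omega^{O\an{7}}_8(\C P^\infty;\xi_4(\dd))$ is zero or injective, since Lemma \ref{lem:adams} needs injectivity; and your claim that the exceptional multidegrees all have $v_4=0$ is false for $\dd=\{1\}$, which is non-spin with $v_4\neq 0$ and odd total degree --- the paper instead disposes of it by noting $SD_4(\{1\})=SD_4(\dd')$ forces $\dd'=\{1\}$.)

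Part (b) has a genuine gap. The Kreck--Traving argument produces a difference class of Adams filtration at least $\nu_2(d)$ at the prime $2$; when $d$ is odd this is filtration $\geq 0$, i.e.\ no information whatsoever. Your assertion that ``since $d$ is a unit $2$-locally, the multiplication-by-$d$ differentials trivialise the $2$-torsion obstructions'' inverts the logic of that argument: it is high $2$-divisibility of $d$, not invertibility, that kills torsion there. Moreover the obstruction group does not vanish in the odd case --- Kasilingam's theorem that $\C P^4\not\cong\C P^4\sharp\Sigmaex$ shows $i_*\colon\Theta_8\to\Omega^{O\an{7}}_8(\C P^\infty;\xi_4(\dd))$ can be injective --- so one must identify the specific difference class, not bound the group. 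The paper's actual proof of (b) uses an entirely different mechanism: the canonical degree-$d$ normal invariant $\eta_4(\dd)\in[\C P^4,(\qsnmo)_d]$ of Theorem \ref{thm:eta_of_X}, the Brumfiel--Madsen equivalence $((\qsnmo)_d)_{(2)}\simeq(G/O)_{(2)}$ valid precisely because $d$ is odd, Sullivan's splitting $(G/O)_{(2)}\simeq(BSO)_{(2)}\times\coker(J)_{(2)}$, and Feshbach's Segal-Conjecture result that $[\C P^\infty,\coker(J)_{(2)}]=0$, which applies because $\eta_4(\dd)$ extends over $\C P^\infty$. The $(BSO)_{(2)}$ factor is then pinned down by the Sullivan data, and Lemmas \ref{lem:SC_via_cni0} and \ref{lem:2local} convert equality of normal invariants into bordant normal $3$-smoothings. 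None of this is recoverable from the Adams-filtration bookkeeping you propose; this is exactly why the paper describes the odd-degree case as completely new.
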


Note that the cases discussed so far 
(i.e.\ those prior to Theorem \ref{thm:main-p3}\,(b)), have a significant overlap 
with, but are not implied by, the theorem of Kreck and Traving \cite[Theorem A]{Kr}. 
However, the case of odd total degree covered in Theorem \ref{thm:main-p3}\,(b) is completely new.
Note also that the total degree can be odd only if $v_2(X_4(\dd)) \neq 0$ and $v_4(X_4(\dd)) \neq 0$; see Proposition \ref{prop:SW-classes}.

To prove Theorem \ref{thm:main-p3}\,(b) we use the Hambleton-Madsen theory of degree-$d$ normal maps \cite{H-M}.  A complete intersection $X_n(\dd)$ (with a canonical choice of normal data) represents an element in the set $\NN_d({\C}P^n)$ of normal bordism classes of degree-$d$ normal maps over ${\C}P^n$.
As explained in Section~\ref{ss:drnm}, an oriented version of the Hambleton-Madsen theory gives a bijective normal invariant map
\[ \eta \colon \NN_d(\C P^n) \equiv [\C P^n, (\qsnmo)_d],\]
which is the usual normal invariant in the familiar case when $d = 1$ and where $(\qsnmo)_d$ is 
the oriented version of the classifying space for isomorphism classes of stable fibrewise degree-$d$ maps between sphere bundles of vector bundles, which was identified by Brumfiel and Madsen \cite[\S 4]{B-M}.
We establish a relationship between certain 
``relative divisors" of a vector bundle and degree-$d$ normal maps over the vector bundle
(Lemma \ref{lem:div}) and then use this to give a formula for the 
canonical degree-$d$ normal invariant of $X_n(\dd)$ (Theorem \ref{thm:eta_of_X}). 

The surgery argument of Proposition \ref{prop:classification_general} also works if we have bordant representatives in $\NN_d({\C}P^n)$ (Lemma \ref{lem:SC_via_cni0}). This and the formula of Theorem \ref{thm:eta_of_X} leads to a new perspective on the stable homotopy-theoretic input needed to prove the Sullivan Conjecture (see Theorem \ref{thm:SC_via_cni}). This new perspective allows us to prove the $4$-dimensional Sullivan Conjecture when the total degree is odd and we anticipate that it will lead to other new results in higher dimensions; e.g.\ see Remark \ref{rem:SC_prime_to_d}.

Notice that Fang and Klaus (Theorem \ref{thm:FK}) reduced the $4$-dimensional Sullivan Conjecture to a $2$-local problem. When $d$ is odd, the work of Brumfiel and Madsen \cite{B-M} shows that there is 
an equivalence of $2$-localisations
$((\qsnmo)_d)_{(2)} \simeq (G/O)_{(2)}$, where $G/O$ is the familiar classifying space
from classical surgery theory \cite{Br1, Wa2}.  We can then exploit Sullivan's $2$-local splitting (see \cite[Theorem 5.18]{M-M}),
$$ (G/O)_{(2)} \simeq (BSO)_{(2)} \times \coker J_{(2)},$$
where $\coker J_{(2)}$ is a 
$2$-local space whose homotopy groups are certain large summands of the
$2$-primary component of the cokernel of the $J$-homomorphism (see \cite[Definition 5.16]{M-M}).
It follows that we have a sequence of maps
\[
[\C P^n, (\qsnmo)_d] \to [\C P^n, ((\qsnmo)_d)_{(2)}]  \xra{~\equiv~} [\C P^n, (G/O)_{(2)}] \xra{~\equiv~} [\C P^n, (BSO)_{(2)}] \times [\C P^n, \coker J_{(2)}].
\]
The formula for the degree-$d$ normal invariant of $X_n(\dd)$ shows that it is the restriction of a map ${\C}P^\infty \to (\qsnmo)_d$.  Now the proof of a theorem of Feshbach \cite[Theorem 6]{Fe1}, which is based on 
the Segal Conjecture for the Lie group $S^1$,
implies that any map $\C P^\infty \to \coker J_{(2)}$ is null-homotopic and this is enough to prove that the 
$[\C P^n, \coker J_{(2)}]$ factor of the $2$-localised normal invariant is trivial (Corollary \ref{cor:mod_p_normal_inv}). The $[\C P^n, (BSO)_{(2)}]$ factor is controlled by the Sullivan data, hence in dimension $4$ the degree-$d$ normal invariant is completely determined by the Sullivan data (Theorem \ref{thm:bordism_over_CP4}). The $4$-dimensional Sullivan Conjecture for complete intersections with odd total degree follows (Theorem \ref{thm:SC_d=odd}).

\subsection{Inertia groups of $4$-dimensional complete intersections} 
\label{ss:inertia}
Recall that the inertia group of a closed connected 
$m$-manifold $M$ is the subgroup
\[
I(M) := \bigl\{ \Sigma \in \Theta_m \mid \text{$M$ and $M \sharp \Sigma$ are diffeomorphic} \bigr\} \subseteq \Theta_m 
\]
of the group of homotopy $m$-spheres $\Theta_m$ \cite{K-M}.
For example, an $8$-manifold $M$ is $\Theta$-rigid if and only if $I(M) = \Theta_8$.
The results in Section \ref{ss:Proof} determine the inertia groups of a
$4$-dimensional complete intersection when $X_4(\dd)$ is spin, or when $X_4(\dd)$ is
non-spin and $v_4(X_4(\dd)) = 0$.
When $X_4(\dd)$ is non-spin and $v_4(X_4(\dd)) \neq 0$, 
we have $p_1(4,\dd) \equiv 3$ mod $4$ 
(see Proposition \ref{prop:SW-classes} and the calculations in Section \ref{ss:SD})
and we offer the third and fourth rows of the table in the following conjecture.

\begin{conjecture} \label{conj:inertia}
The inertia groups $I(X_4(\dd)) \subseteq \Theta_8 \cong \Z/2$ of $4$-dimensional complete intersections are given by the table below:
\[
\begin{array}{c|c|c|c}
\vphantom{\gtreqqless} v_2(X_4(\dd)) & v_4(X_4(\dd)) & p_1(4,\dd)\!\!\mod~8 & I(X_4(\dd)) \\
\hline \hline
\vphantom{\bar{\gtreqqless}} 0 & - & - & 0 \\
\hline
\vphantom{\bar{\gtreqqless}} 1 & 0 & - & \Theta_8 \\
\hline
\vphantom{\bar{\gtreqqless}} 1 & 1 & 3 & 0 \\
\hline
\vphantom{\bar{\gtreqqless}} 1 & 1 & 7 & \Theta_8
\end{array}
\]
Here a $``-"$ indicates the value of the invariant is not relevant for $I(X_4(\dd))$ in that case.
\end{conjecture}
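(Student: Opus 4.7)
The first two rows of the table are immediate from Section~\ref{ss:Proof}: Theorem~\ref{thm:main-p1} gives $I(X_4(\dd)) = 0$ in row~1 (strong $\Theta$-flexibility), and Theorem~\ref{thm:main-p2} gives $I(X_4(\dd)) = \Theta_8$ in row~2 ($\Theta$-rigidity). The new content is the case $v_2(X_4(\dd)) = v_4(X_4(\dd)) = 1$, where the inertia group is predicted to toggle with $p_1(4,\dd) \bmod 8$.

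My plan is to apply the modified surgery framework of Proposition~\ref{prop:classification_general}: for $\dd \notin \{\{1\},\{2\},\{2,2\}\}$, the question of whether $X_4(\dd) \sharp \Sigmaex$ is diffeomorphic to $X_4(\dd)$ reduces to whether the image of $\Sigmaex$ under the canonical map $i_\dd \colon \Theta_8 \to \Omega^{O\an{7}}_8(\C P^\infty; \xi_4(\dd))$ vanishes, up to the action of the automorphism group of the normal $3$-type. By Proposition~\ref{prop:Z/4} and the James spectral sequence comparing $\C P^1$ and $\C P^\infty$, $i_\dd$ factors through $\Tors \Omega^{O\an{7}}_8(\C P^1; \xi_4(\dd)|_{\C P^1}) \cong \Z/4$, so the task is to compute the resulting element of $\Z/4$ as a function of $\dd$. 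Following the twisted bordism computations of~\cite[Section 2.4]{F-K}, the twist of the $O\an{7}$-structure on the $2$-skeleton $\C P^1$ is determined by $w_2$ and by $p_1(4,\dd) \bmod 8$; I expect a direct computation to show that $i_\dd(\Sigmaex) = 0$ exactly when $p_1(4,\dd) \equiv 7 \bmod 8$, and equal to the unique element of order $2$ in $\Z/4$ when $p_1(4,\dd) \equiv 3 \bmod 8$. Row~4 then yields $\Theta$-rigidity, while row~3, combined with Theorem~\ref{thm:main} to rule out $X_4(\dd) \sharp \Sigmaex$ being any other complete intersection, yields strong $\Theta$-flexibility and hence $I(X_4(\dd)) = 0$.

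The main obstacle is an explicit identification of a generator of the $\Z/4$ in Proposition~\ref{prop:Z/4} together with a clean formula tracking how $i_\dd(\Sigmaex)$ depends on $p_1(4,\dd)$; this requires tracing $\Sigmaex$ through the Adams filtration in the twisted $O\an{7}$-bordism spectral sequence, and interacting carefully with the $v_4$-detected refinement of the $O\an{7}$-structure on $X_4(\dd)$. A secondary difficulty is the ``action on the bordism set'' step needed for row~3: one must check that no self-equivalence of the normal $3$-type absorbs the $\Z/4$-obstruction, which I would handle by identifying $i_\dd(\Sigmaex)$ with a stable diffeomorphism invariant (such as a refined $\eta$-invariant) that is manifestly preserved under all such automorphisms.
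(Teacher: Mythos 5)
The statement you are trying to prove is labelled a \emph{Conjecture} in the paper, and the paper does not prove rows~3 and~4: the remark following it only records that rows~1 and~2 follow from Theorems~\ref{thm:main-p1} and \ref{thm:main-p2} (plus Remark~\ref{rem:22}), that row~3 is \emph{consistent} with Kasilingam's computation $I(\C P^4)=0$, and that the expected dichotomy should come from analysing the homotopy type of the Thom spectrum of $\xi_4(\dd)\big|_{\C P^4}$. Your treatment of rows~1 and~2 agrees with the paper. For rows~3 and~4 your proposal is a plan rather than a proof, and it contains a concrete error: a stable vector bundle over $\C P^1=S^2$ is classified by $w_2$ alone (via $\pi_2(BO)\cong\Z/2$), so $\xi_4(\dd)\big|_{\C P^1}$ and the group $\Tors\,\Omega^{O\an{7}}_8(\C P^1;\xi_4(\dd)\big|_{\C P^1})\cong\Z/4$ cannot detect $p_1(4,\dd)\bmod 8$. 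Indeed, by Lemma~\ref{lem:Omega_8(CP1)} the composite $\Theta_8\to\Tors\,\Omega^{O\an{7}}_8(\C P^1;\xi_4(\dd)\big|_{\C P^1})$ is \emph{always} injective (its image is the element $2a$ of order two), independently of $\dd$. The conjectured dichotomy must therefore be decided by the map $\Omega^{O\an{7}}_8(\C P^1;\cdot)\to\Omega^{O\an{7}}_8(\C P^\infty;\xi_4(\dd))$, i.e.\ by attaching the higher cells of $\C P^\infty$, where $p_1$ first enters; this is exactly the Thom-spectrum analysis over $\C P^4$ that the paper flags as the open computation, and your proposal does not carry it out.

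A second gap concerns row~3 even granting $i_\dd(\Sigmaex)\neq 0$: the bordism class in $\Omega^{O\an{7}}_8(\C P^\infty;\xi_4(\dd))$ is an invariant of a chosen normal $3$-smoothing, not of the manifold, so concluding $X_4(\dd)\sharp\Sigmaex\not\cong X_4(\dd)$ requires controlling the action of self-equivalences of the normal $3$-type (and of choices of smoothing) on the bordism set. You acknowledge this but offer only a speculative ``refined $\eta$-invariant''; the paper explicitly states that in the $\Theta$-flexible non-spin cases it knows of no diffeomorphism invariant, beyond the bordism class itself, distinguishing $X_4(\dd)$ from $X_4(\dd)\sharp\Sigmaex$. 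So both the computation driving the $p_1\bmod 8$ dichotomy and the descent from bordism classes to diffeomorphism classes remain genuinely open, and your proposal does not close either.
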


\begin{rem}
The first the line of the table follows from Theorem \ref{thm:main-p1} and the second line
follows from Theorem \ref{thm:main-p2} and Remark \ref{rem:22}.
By \cite[Remark 2.6\,(1)]{Ka}, $I(X_4(1)) = I(\C P^4) = 0$, which is consistent with the third line of the table. The conjecture is based on analysing the homotopy type of the Thom spectrum of $\xi_4(\dd)\big| _{\C P^4}$ and using this to determine the map $\Theta_8 \cong \Tors \Omega^{O\an{7}}_8 \to \Omega^{O\an{7}}_8(\C P^\infty; \xi_4(\dd))$.

In the spin case, we identified a diffeomorphism invariant property which distinguishes the manifolds $X_4(\dd)$ and $X_4(\dd) \sharp \Sigmaex$. In the $\Theta$-flexible non-spin cases, besides the bordism class in $\Omega^{O\an{7}}_8(\C P^\infty; \xi_4(\dd))$, we do not know of such a property.
\end{rem}

The rest of this paper is organised as follows.  Section \ref{s:prelims} covers necessary preliminaries.
Section \ref{s:spin} treats the spin case. 
Section \ref{s:non-spin1} treats the two non-spin cases whose solution relies on Proposition \ref{prop:Z/4}, which are the case with $v_4 = 0$ and the case with $v_4 \neq 0$ and even total degree (together comprising all non-spin complete intersections with even total degree). Section \ref{s:non-spin2} treats the case of odd total degree. Finally, Section \ref{s:appendix} is an appendix about Toda brackets and extensions, which is needed in Section \ref{s:non-spin1} and specifically for the proof of Proposition \ref{prop:Z/4}.

\subsection*{Acknowledgements} 

It is a pleasure to thank Dennis Sullivan for stimulating conversations about the origins of the Sullivan Conjecture and his work on the classification of manifolds.  We thank the anonymous referee for many useful comments. The second author was supported by the Melbourne Research Scholarship. 

\section{Preliminaries} \label{s:prelims}
In this section we recall and establish some basic facts about complete intersections and Sullivan data.
We then recall Kreck's modified surgery setting for the classification of complete intersections.

\subsection{Complete intersections}
Given a finite multiset $\dd = \left\{ d_1, d_2, \ldots , d_k \right\}$ of positive integers, consider homogeneous polynomials $f_1, f_2, \ldots , f_k \in \C[x_0, x_1, \ldots, x_{n+k}]$ with these degrees. If the zero set $\{ [\ul x] \in {\C}P^{n+k} \mid f_i(\ul x) = 0 \}$ of $f_i$ is a smooth submanifold of ${\C}P^{n+k}$ for every $i$ and these submanifolds are transverse, then their intersection is a representative of the complete intersection $X_n(\dd)$. Any two representatives are diffeomorphic, due to an argument generally attributed to Thom (see e.g.\ \cite{Br2}), which we outline below. 

Let $P_n(\dd)$ denote the space of tuples $(f_1, f_2, \ldots, f_k)$ of homogeneous polynomials in $n{+}k{+}1$ variables of degrees $d_1, d_2, \ldots , d_k$, and let $P_n(\dd)^{\text{ns}} \subseteq P_n(\dd)$ be the subspace of tuples that define complete intersections. The restriction of the tautological map $\{ ([\ul x],(f_1, f_2, \ldots, f_k)) \in {\C}P^{n+k} \times P_n(\dd) \mid  \forall i \colon f_i(\ul x) = 0 \} \rightarrow P_n(\dd)$ to $P_n(\dd)^{\text{ns}}$ is a locally trivial bundle, and its fibres are the representatives of $X_n(\dd)$. Since $P_n(\dd) \setminus P_n(\dd)^{\text{ns}} \subset P_n(\dd)$ is a subvariety of positive complex codimension, $P_n(\dd)^{\text{ns}}$ is a generic (i.e.\ open and everywhere dense) subset in $P_n(\dd)$ and it is path-connected.

This implies that every tuple in $P_n(\dd)$ can be approximated by one in $P_n(\dd)^{\text{ns}}$. 
We also get that any two tuples in $P_n(\dd)^{\text{ns}}$ can be joined by a path in $P_n(\dd)^{\text{ns}}$, which determines a diffeomorphism (up to isotopy) between the fibres over them. So if we take $X_n(\dd)$ to mean any of its representatives, then it is well-defined up to diffeomorphism. Moreover, if two representatives are identified via a path as above, then their natural embeddings in ${\C}P^{n+k}$ are isotopic; hence $X_n(\dd)$ comes equipped with an embedding $i \colon X_n(\dd) \rightarrow {\C}P^{n+k}$, well-defined up to isotopy. The embedding $i$ is $n$-connected (this follows from the Lefschetz Hyperplane Theorem, or see \cite[Chapter 5 (2.6)]{D}).

\subsection{Computation of Sullivan data and the converse of the Sullivan Conjecture} \label{ss:SD} 

\begin{defin}
Let $x \in H^2({\C}P^{\infty})$ denote the standard generator (satisfying $\left< x, [{\C}P^1] \right> = 1$). The pullbacks of $x$ (by the standard embeddings) in $H^2({\C}P^m)$ and $H^2(X_n(\dd))$ will also be denoted by $x$.
\end{defin}

\begin{defin}
For a (complex) bundle $\xi$ and a positive integer $r$ let $r\xi = \xi \oplus \ldots \oplus \xi$ denote the $r$-fold Whitney sum of $\xi$ with itself and let $-r\xi$ denote the stable bundle which is the inverse of $r\xi$. Let $\xi^r = \xi \otimes \ldots \otimes \xi$ be the $r$-fold tensor product (over $\C$) of $\xi$ with itself. For a tuple $\ul r = (r_1, r_2, \ldots , r_k)$ let $\xi^{\ul r} = \xi^{r_1} \oplus \xi^{r_2} \oplus \ldots \oplus \xi^{r_k}$.
\end{defin}

\begin{defin} \label{def:gamma}
Let $\gamma$ be the conjugate of the tautological complex line bundle over ${\C}P^{\infty}$.
\end{defin}

With this notation, the tautological bundle is $\bar{\gamma}$, and since $c_1(\bar{\gamma})=-x$, we have $c_1(\gamma)=x$. It is well-known that the normal bundle of ${\C}P^m$ in ${\C}P^{m+1}$ is $\nu({\C}P^m \rightarrow {\C}P^{m+1}) \cong \gamma \big| _{{\C}P^m}$ and that the stable normal bundle of ${\C}P^m$ is $\nu_{{\C}P^m} \cong -(m{+}1)\gamma \big| _{{\C}P^m}$ (see e.g.\ \cite[\S14]{M-S}). 

\begin{defin} \label{def:xi-nd}
The stable vector bundle $\xi_n(\dd)$ over ${\C}P^{\infty}$ is defined to be 
\[ 
\xi_n(\dd) := -(n{+}k{+}1)\gamma \oplus \gamma^{d_1} \oplus \ldots \oplus \gamma^{d_k}.
\]
\end{defin}

Since the normal bundle of a degree-$r$ hypersurface in ${\C}P^m$ is the restriction of $\gamma^r$ (cf.\ Construction \ref{const:poly1} and Remark \ref{rem:divisor}), we have

\begin{prop} \label{prop:normal-b}
The stable normal bundle $\nu_{X_n(\dd)}$ of $X_n(\dd)$ is isomorphic to $i^*(\xi_n(\dd) \big| _{{\C}P^{n+k}})$. 
\hfill \qed
\end{prop}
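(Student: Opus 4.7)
The plan is to read off $\nu_{X_n(\dd)}$ from the stable splitting of the tangent--normal sequence for the embedding $i\colon X_n(\dd) \hookrightarrow \C P^{n+k}$. Writing $\nu(i)$ for the normal bundle of this embedding, the short exact sequence $0 \to TX_n(\dd) \to i^*T\C P^{n+k} \to \nu(i) \to 0$ splits stably, so $\nu_{X_n(\dd)} \cong i^*\nu_{\C P^{n+k}} \oplus \nu(i)$. Each summand can then be identified using facts already recalled just before the statement of the proposition.

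First I would use transversality of the defining hypersurfaces $Y_1, \ldots, Y_k$ at the intersection $X_n(\dd) = Y_1 \cap \ldots \cap Y_k$: by the standard description of the normal bundle of a transverse intersection, $\nu(i) \cong \bigoplus_{j=1}^{k} \nu(Y_j \hookrightarrow \C P^{n+k})|_{X_n(\dd)}$. Combining this with the already-recalled fact that $\nu(Y_j \hookrightarrow \C P^{n+k})$ is the restriction of $\gamma^{d_j}$, I would obtain $\nu(i) \cong i^*\bigl((\gamma^{d_1} \oplus \cdots \oplus \gamma^{d_k})|_{\C P^{n+k}}\bigr)$.

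Finally I would plug in the stable normal bundle $\nu_{\C P^{n+k}} \cong -(n+k+1)\gamma|_{\C P^{n+k}}$ and add the two contributions, which yields exactly $i^*(\xi_n(\dd)|_{\C P^{n+k}})$ by the definition of $\xi_n(\dd)$. There is no real obstacle: every input is either a direct consequence of transversality or is explicitly recorded in the paragraph preceding the proposition, so the proof reduces to one line of bookkeeping. The only minor subtlety to flag is that the embedding $i$ is only well-defined up to isotopy, but that is enough to make the pullback $i^*(\xi_n(\dd)|_{\C P^{n+k}})$ well-defined as an isomorphism class of stable bundles.
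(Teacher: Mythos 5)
Your proposal is correct and is exactly the argument the paper intends: the paper gives no written proof beyond the remark that the normal bundle of a degree-$r$ hypersurface is the restriction of $\gamma^r$, leaving the reader to combine this with transversality and $\nu_{\C P^{n+k}} \cong -(n{+}k{+}1)\gamma\big|_{\C P^{n+k}}$ precisely as you do. Nothing further is needed.
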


\begin{rem} \label{rem:char}
Since $\nu_{X_n(\dd)}$ is the pullback of a bundle over ${\C}P^{\infty}$, all of the stable characteristic classes of 
$X_n(\dd)$ 
lie in the subring $i^*(H^*(\C P^{\infty})) \subseteq H^*(X_n(\dd))$, which is generated by $x \in H^2(X_n(\dd))$. 
In particular, 
$p_j(X_n(\dd)) \in \left< x^{2j} \right> \cong \Z$, $c_j(X_n(\dd)) \in \left< x^j \right> \cong \Z$ and if $2j \leq n$, then $w_{2j}(X_n(\dd)), v_{2j}(X_n(\dd)) \in \left< \varrho_2(x^j) \right> \cong \Z/2$, where $\varrho_2 \colon H^*(X_n(\dd)) \rightarrow H^*(X_n(\dd); \Z/2)$ is reduction mod $2$. (If $2j > n$ and $d$ is even, then $\varrho_2(x^j)=0$.)
\end{rem}

Proposition \ref{prop:normal-b} allows us to compute the characteristic classes of $X_n(\dd)$ in terms of the degrees
$d_1, \ldots, d_k$.  Since $c(\gamma^r) = 1+rx$, the total Chern class of $\xi_n(\dd)$ is $c(\xi_n(\dd)) = (1+x)^{-(n+k+1)} \prod_{i=1}^k (1+d_ix)$. The same formula holds for the normal bundle $\nu_{X_n(\dd)}$, because it is the pullback of $\xi_n(\dd)$. This implies that $c(X_n(\dd)) = (1+x)^{n+k+1} \prod_{i=1}^k (1+d_ix)^{-1}$. For the Pontryagin classes we have $p(\gamma^r) = 1-r^2x^2$, hence $p(X_n(\dd)) = (1-x^2)^{n+k+1} \prod_{i=1}^k (1-d_i^2x^2)^{-1}$.

The Euler characteristic of $X_n(\dd)$ can also be determined, namely 
\begin{multline*}
\chi(X_n(\dd)) = \left< c_n(X_n(\dd)), [X_n(\dd)] \right> = \left< c_n(-\nu_{X_n(\dd)}), [X_n(\dd)] \right> = \left< c_n(-i^*(\xi_n(\dd))), [X_n(\dd)] \right> = \\
\left< c_n(-\xi_n(\dd)), i_*([X_n(\dd)]) \right>,
\end{multline*}
where $i_*([X_n(\dd)]) \in H_{2n}({\C}P^{n+k})$ is $d$ times the generator.

It will be useful to explicitly compute the Stiefel-Whitney classes $w_2$ and $w_4$ and Wu classes $v_2$ and $v_4$ of a $4$-dimensional complete intersection $X_4(\dd)$.

\begin{defin} \label{def:p(d)}
For a multidegree $\dd$ let $p(\dd)$ denote the number of even degrees in $\dd$. 
\end{defin}

\begin{prop} \label{prop:SW-classes}
The Stiefel-Whitney classes $w_2$ and $w_4$ of $\nu_{X_4(\dd)}$ and $X_4(\dd)$ and Wu classes $v_2$ and $v_4$ of $X_4(\dd)$ are determined by $p(\dd)$ mod $4$ as follows (by Remark \ref{rem:char} these Stiefel-Whitney classes and Wu classes can be regarded as elements of $\Z/2$): 
\[
\begin{array}{ c | c | c | c | c}
\vphantom{\gtreqqless} p(\dd) \text{ mod } 4 & 0 & 1 & 2 & 3 \\ \hline \hline
\vphantom{\bar{\gtreqqless}} w_2(\nu_{X_4(\dd)}) = w_2(X_4(\dd)) = v_2(X_4(\dd)) & 1 & 0 & 1 & 0 \\ \hline
\vphantom{\bar{\gtreqqless}} w_4(\nu_{X_4(\dd)}) = v_4(X_4(\dd)) & 1 & 1 & 0 & 0 \\ \hline
\vphantom{\bar{\gtreqqless}} w_4(X_4(\dd)) & 0 & 1 & 1 & 0 \\
\end{array}
\]
\end{prop}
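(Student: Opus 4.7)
The plan is to first compute the total Stiefel--Whitney class of $\nu_{X_4(\dd)}$ modulo $2$ directly from Definition \ref{def:xi-nd}, then pass to $w(X_4(\dd))$ via the identity $w(X_4(\dd))\,w(\nu_{X_4(\dd)}) = 1$, and finally extract the Wu classes using Wu's formula $w = \mathrm{Sq}(v)$.

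For the first step, $\gamma^r$ is a complex line bundle with $c_1(\gamma^r) = rx$, so $w(\gamma^r) \equiv 1 + rx \pmod{2}$, which equals $1$ if $r$ is even and $1 + x$ if $r$ is odd. Applying the Whitney sum formula to Definition \ref{def:xi-nd} and noting that exactly $k - p(\dd)$ of the line bundles $\gamma^{d_i}$ have odd degree, I obtain
\[
w(\nu_{X_4(\dd)}) \equiv i^*\bigl((1+x)^{p(\dd) - 5}\bigr) \pmod{2} \quad \text{and} \quad w(X_4(\dd)) \equiv i^*\bigl((1+x)^{5 - p(\dd)}\bigr) \pmod{2}.
\]
The entries $w_2$ and $w_4$ for both bundles are the mod-$2$ coefficients of $x$ and $x^2$ in these power series, and can be read off as binomial coefficients modulo $2$. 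Tabulating these for each residue $p(\dd) \bmod 4$ yields the first and third rows of the table; the identity $w_4(\nu_{X_4(\dd)}) = w_4(X_4(\dd)) + w_2(X_4(\dd))^2$ following from $w(X_4(\dd))\,w(\nu_{X_4(\dd)}) = 1$, together with the fact that squaring is the identity on $\Z/2$, gives the second row from the other two (and serves as a consistency check).

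For the Wu classes I would use Wu's formula $w_i(X_4(\dd)) = \sum_j \mathrm{Sq}^{i-j}(v_j)$. Since $H^1$ and $H^3$ of $X_4(\dd)$ both vanish with $\Z/2$ coefficients, $v_1 = v_3 = 0$, so Wu's formula collapses to $v_2 = w_2$ and $v_4 = w_4 + \mathrm{Sq}^2(v_2)$. By Remark \ref{rem:char}, $v_2$ is either $0$ or $\varrho_2(x)$, hence $\mathrm{Sq}^2(v_2) = v_2^2 = v_2$ as a $\Z/2$-valued invariant. Therefore
\[
v_4(X_4(\dd)) = w_4(X_4(\dd)) + w_2(X_4(\dd)) = w_4(\nu_{X_4(\dd)}),
\]
which is exactly the second row of the table.

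The proof is essentially bookkeeping and I do not expect a serious obstacle; the only mild subtlety I would flag is the interpretation of $(1+x)^m$ for negative $m$ modulo $2$, which is handled either by expanding $(1+x)^{-1} \equiv 1 + x + x^2 + \dotsb \pmod{2}$ as a formal power series or by invoking the sign-invariance of binomial coefficients modulo $2$.
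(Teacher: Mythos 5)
Your overall strategy is the same as the paper's (compute the characteristic classes of $\nu_{X_4(\dd)}$ from Definition \ref{def:xi-nd}, pass to $X_4(\dd)$ by Whitney duality, and extract the Wu classes via Wu's formula using $H^1 = H^3 = 0$), and the Wu-class step is handled correctly. However, there is a concrete sign error in the key formula. From $\xi_4(\dd) = -(k{+}5)\gamma \oplus \gamma^{d_1} \oplus \ldots \oplus \gamma^{d_k}$ and the fact that $k - p(\dd)$ of the $d_i$ are odd, the exponent is $-(k{+}5) + (k - p(\dd)) = -(5 + p(\dd))$, so
\[
w(\nu_{X_4(\dd)}) \equiv i^*\bigl((1+x)^{-(5+p(\dd))}\bigr) \quad \text{and} \quad w(X_4(\dd)) \equiv i^*\bigl((1+x)^{5+p(\dd)}\bigr) \pmod 2,
\]
not $(1+x)^{p(\dd)-5}$ and $(1+x)^{5-p(\dd)}$ as you wrote. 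The ``sign-invariance of binomial coefficients modulo $2$'' that you invoke to justify interchanging these is false: $\binom{-m}{2} = \binom{m+1}{2} \equiv \binom{m}{2} + m \pmod 2$. Concretely, your exponent differs from the correct one by $2p(\dd)$, and $(1+x)^{2p(\dd)} = (1+x^2)^{p(\dd)} \equiv 1 + p(\dd)x^2 + \ldots$, so the coefficient of $x^2$ is off by $p(\dd)$ mod $2$. The coefficient of $x$ is unaffected, so your first row is correct, but with your formulas as written the third row comes out as $0,0,1,1$ instead of $0,1,1,0$, and the second row (which you derive from the other two) as $1,0,0,1$ instead of $1,1,0,0$. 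A sanity check: for the quadric $X_4(2)$ one has $p(\dd) = 1$ and $w(X_4(2)) \equiv (1+x)^6 = (1+x^2)^3$, so $w_4(X_4(2)) = \varrho_2(x^2) \neq 0$, agreeing with the table but not with $(1+x)^{5-1} = 1 + x^4$. With the exponent corrected, and the negative power expanded as a genuine formal power series (your first, correct, suggestion for handling negative exponents), the rest of the argument goes through and reproduces the paper's proof, which works with the Chern classes of $\xi_n(\dd)$ and reduces mod $2$ rather than working mod $2$ from the start.
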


\begin{proof}
The total Chern class of $\xi_n(\dd)$ is given by the following formula: 
\begin{multline*}
c(\xi_n(\dd)) = (1+x)^{-(n+k+1)} \prod_{i=1}^k (1+d_ix) = \\
1 + \Biggl( - (n + 1) + \sum_{i=1}^k (d_i-1) \Biggr) x + \Biggl( \binom{n+2}{2} - (n+2)\sum_{i=1}^k (d_i-1) + \sum_{1 \leq i < j \leq k} (d_i-1)(d_j-1) \Biggr) x^2  + \ldots
\end{multline*}
We have $w_{2i} = \varrho_2(c_i)$.  Therefore 
\[
\begin{aligned}
w_2(\xi_4(\dd)) &= \varrho_2 \Biggl( \Biggl( -5 + \sum_{i=1}^k (d_i-1) \Biggr) x \Biggr) = \varrho_2((1+p(\dd)) x)  \quad\quad \text{and} \\
w_4(\xi_4(\dd)) &= \varrho_2 \Biggl( \Biggl( 15 - 6 \sum_{i=1}^k (d_i-1) + \sum_{1 \leq i < j \leq k} (d_i-1)(d_j-1) \Biggr) x^2 \Biggr) = \varrho_2 \Biggl( \left( 1+\binom{p(\dd)}{2} \right) x^2 \Biggr) \text{\,.}
\end{aligned} 
\]
We have the same formulas for the Stiefel-Whitney classes of $\nu_{X_4(\dd)}$, because 
$\nu_{X_4(\dd)}$ is the pullback of $\xi_4(\dd)$. 
Since $H^1(X_4(\dd); \Z/2) \cong H^3(X_4(\dd); \Z/2) \cong 0$, the Stiefel-Whitney classes $w_2(X_4(\dd))$ and $w_4(X_4(\dd))$ are determined by $w_2(\nu_{X_4(\dd)})$ and $w_4(\nu_{X_4(\dd)})$ via the Cartan formula. We get that $w_2(X_4(\dd)) = w_2(\nu_{X_4(\dd)})$ and $w_4(X_4(\dd)) = w_2(\nu_{X_4(\dd)})^2 + w_4(\nu_{X_4(\dd)})$. By applying the Wu formula we get that $v_2(X_4(\dd)) = w_2(X_4(\dd))$ and $v_4(X_4(\dd)) = w_2(X_4(\dd))^2 + w_4(X_4(\dd)) = w_4(\nu_{X_4(\dd)})$.
\end{proof}

\begin{rem} \label{rem:p-mod-4}
Notice that if $v_2(X_4(\dd)) \neq 0$ and $v_4(X_4(\dd)) \neq 0$, then $p(\dd)$ is divisible by $4$. This means that either $p(\dd)=0$, hence all degrees are odd and so the total degree is odd; or $p(\dd) \geq 4$, so there are at least $4$ even degrees and then the total degree is divisible by $16$. 
\end{rem}

The following proposition implies that the converse of the Sullivan Conjecture holds.

\begin{prop} \label{prop:SC-conv}
Let $n \geq 3$ and let $\dd$ and $\dd'$ be two multidegrees. If there is a homotopy equivalence $f \colon X_n(\dd) \rightarrow X_n(\dd')$ such that $f^*(\nu_{X_n(\dd')}) \cong \nu_{X_n(\dd)}$ (e.g.\ if $f$ is a diffeomorphism), then $SD_n(\dd) = SD_n(\dd')$.
\end{prop}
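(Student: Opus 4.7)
The plan is to verify each of the three components of $SD_n(\dd)$ in turn. The Euler characteristic is a homotopy invariant, so $\chi(X_n(\dd)) = \chi(X_n(\dd'))$ is immediate from the existence of $f$. For the remaining two components, I first compare generators of $H^2$: by the Lefschetz hyperplane theorem (applicable since $n \geq 3$), $H^2(X_n(\dd)) \cong \Z$ is generated by the class $x$ of Remark \ref{rem:char}, and similarly $H^2(X_n(\dd'))$ is generated by the corresponding class $x'$. Since $f^*$ is an isomorphism on cohomology, $f^*(x') = \epsilon x$ for some $\epsilon \in \{+1, -1\}$.

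For the Pontryagin invariants, the hypothesis $f^*\nu_{X_n(\dd')} \cong \nu_{X_n(\dd)}$ yields $f^*p_j(\nu_{X_n(\dd')}) = p_j(\nu_{X_n(\dd)})$ for all $j$. The stable relation $TX \oplus \nu_X = 0$ and the Whitney sum formula then express each $p_i(X_n(\dd))$ as a universal polynomial in $p_1(\nu_{X_n(\dd)}), \ldots, p_i(\nu_{X_n(\dd)})$, so $f^*p_i(X_n(\dd')) = p_i(X_n(\dd))$. Using $f^*(x'^{2i}) = (\epsilon x)^{2i} = x^{2i}$ (the sign drops out because the exponent is even) and expanding both sides according to Remark \ref{rem:char} gives $p_i(n,\dd') x^{2i} = p_i(n,\dd) x^{2i}$, and since $x^{2i}$ generates $H^{4i}(X_n(\dd)) \cong \Z$ for $2i \leq n$, we conclude $p_i(n,\dd) = p_i(n,\dd')$.

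For the total degree, I will invoke the identity $\langle x^n, [X_n(\dd)]\rangle = d$, which holds because $i_*[X_n(\dd)] \in H_{2n}(\C P^{n+k})$ equals $d$ times the standard generator by B\'ezout's theorem. Since $f$ preserves orientation, $f_*[X_n(\dd)] = [X_n(\dd')]$, and the computation
\[
d' = \langle x'^n, [X_n(\dd')]\rangle = \langle f^*(x'^n), [X_n(\dd)]\rangle = \langle \epsilon^n x^n, [X_n(\dd)]\rangle = \epsilon^n d
\]
combined with positivity of $d$ and $d'$ forces $\epsilon^n = 1$ and hence $d = d'$. The only real subtlety in the whole argument is the handling of the sign $\epsilon$; for the Pontryagin classes it disappears because the exponent $2i$ is even, and for the total degree it is pinned down by positivity of both sides, so no serious obstacle arises.
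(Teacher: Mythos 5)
Your proof is correct and follows essentially the same route as the paper's: homotopy invariance of $\chi$, the pairing $\langle x^n,[X_n(\dd)]\rangle=d$ for the total degree, and naturality of Pontryagin classes via the normal-bundle hypothesis; the only cosmetic difference is that the paper eliminates the sign $\epsilon$ by composing $f$ with a conjugation self-diffeomorphism, whereas you carry $\epsilon$ through and observe it is harmless. One small correction: $x^{2i}$ need not generate $H^{4i}(X_n(\dd))$ (e.g.\ in the middle dimension this group has large rank), but all your argument needs is that $x^{2i}$ is non-torsion, which holds since $x^{2i}\cup x^{n-2i}=x^n$ pairs to $d\neq 0$ with the fundamental class (cf.\ Remark \ref{rem:char}, where $\langle x^{2i}\rangle\cong\Z$).
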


\begin{proof}
If $n \geq 3$, then $H^2(X_n(\dd)) \cong H^2(X_n(\dd')) \cong \Z$, so any homotopy equivalence $X_n(\dd) \rightarrow X_n(\dd')$ preserves $x$ up to sign. If $f$ sends $x$ to $-x$, then we can replace it with another homotopy equivalence that preserves $x$, by composing it with a self-diffeomorphism of $X_n(\dd)$ (or $X_n(\dd')$) that changes the sign of $x$. (Consider the conjugation map of the ambient $\C P^{n+k}$, it sends $x$ to $-x$. If a representative of $X_n(\dd)$ is given by polynomials $f_1, f_2, \ldots, f_k$, then its image is another representative of the same complete intersection, given by the conjugate polynomials $\bar{f}_1, \bar{f}_2, \ldots, \bar{f}_k$. By Thom's argument there is a diffeomorphism between the two representatives such that after identifying them their embeddings into $\C P^{n+k}$ are isotopic. By composing this diffeomorphism with the restriction of the conjugation map, we get a self-diffeomorphism of either representative that changes the sign of $x$.) Since $\left< x^n, [X_n(\dd)] \right> = d$ and $\left< x^n, [X_n(\dd')] \right> = d'$, this means that $d=d'$. The Euler characteristic is a homotopy invariant. The Pontryagin classes are preserved by $f$ because of the assumption on the normal bundles, and since the elements $x^{2i}$ are preserved, the Pontryagin classes are also invariant when regarded as integers.
\end{proof}

\begin{rem} \label{rem:SD-equal}
If $\Sigma \in \Theta_{2n}$ is a homotopy sphere, then there is a homeomorphism between $X_n(\dd)$ and $X_n(\dd) \sharp \Sigma$ which preserves normal bundles. 
Thus if $n \geq 3$ and $X_n(\dd) \sharp \Sigma$ is diffeomorphic to a complete intersection $X_n(\dd')$, then $SD_n(\dd) = SD_n(\dd')$.
\end{rem}

\subsection{The setting for modified surgery} \label{ss:surgery}
We recall the setup for the modified surgery arguments of Kreck \cite[Section 8]{Kr} and Fang and Klaus \cite{F-K}, which will be used in Sections \ref{s:non-spin1} and \ref{s:non-spin2}.

Recall that 
the inclusion $i \colon X_n(\dd) \to {\C}P^\infty$ is $n$-connected. It is covered by a bundle map $\bar{i} \colon \nu_{X_n(\dd)} \rightarrow \xi_n(\dd)$ (Proposition \ref{prop:normal-b})
and therefore $(i,\bar{i})$ is a normal $(n{-}1)$-smoothing over $({\C}P^\infty, \xi_n(\dd))$.

Let $\gamma_{BO}$ denote the universal stable vector bundle over $BO$ and $\gamma_{BO\an{j}}$ its pullback to $BO\an{j}$, the $(j{-}1)$-connected cover of $BO$. 
Let $B_n := {\C}P^\infty \times BO\an{n{+}1}$, 
then $(i,\bar{i})$ can be regarded as a normal map over $(B_n, \xi_n(\dd) \times \gamma_{BO\an{n+1}})$ (and it is still $n$-connected). Moreover, the map $B_n \rightarrow BO$ inducing $\xi_n(\dd) \times \gamma_{BO\an{n+1}}$ from $\gamma_{BO}$ is $n$-co-connected, therefore $(B_n, \xi_n(\dd) \times \gamma_{BO\an{n+1}})$ is the normal $(n{-}1)$-type of $X_n(\dd)$. 
When $n = 4$, we have that  $BO\an{5} = BO\an{8} = BString$ by Bott periodicity, and thus 
$(i,\bar{i})$ represents an element in the bordism group of closed $8$-manifolds with normal maps to
$(B_4, \xi_4(\dd) \times \gamma_{BO\an{8}})$.  We denote this bordism group by
$\Omega_8^{\fr}(B_4; \xi_4(\dd) \times \gamma_{BO\an{8}})$; it is canonically
isomorphic to the twisted string bordism group $\Omega_8^{O\an{7}}({\C}P^\infty; \xi_4(\dd))$.

First we will want to apply Proposition \ref{prop:classification_general} when $X' = X_4(\dd) \sharp \Sigmaex$. 
There is a canonical homeomorphism $h \colon X_4(\dd) \sharp \Sigmaex \rightarrow X_4(\dd)$, 
and since homotopy spheres are stably parallelisable \cite[Theorem 3.1]{K-M}, $h$ is covered by a bundle map $\bar{h}$ of stable normal bundles. Then $(i \circ h,\bar{i} \circ \bar{h})$ is also a normal $3$-smoothing over 
$(B_4, \xi_4(\dd) \times \gamma_{BO\an{8}})$, and in the bordism group $\Omega_8^{O\an{7}}({\C}P^\infty; \xi_4(\dd))$ it represents $[i,\bar{i}] + [\Sigmaex]$, where $[\Sigmaex]$ is the image of $\Sigmaex$ under the canonical homomorphism 
$i_0 \colon \Theta_8 \rightarrow \Omega_8^{O\an{7}}({\C}P^\infty; \xi_4(\dd))$. 
So to apply Proposition \ref{prop:classification_general} in this setting we need to show that this homomorphism is trivial and we do this in the non-spin case with $v_4(X_4(\dd)) = 0$; see Proposition \ref{prop:i_CP1}.

Now suppose that $X_4(\dd')$ is another complete intersection with an analogous normal $3$-smoothing
$(i',\bar{i}')$ over $({\C}P^\infty, \xi_4(\dd'))$. If the Pontryagin classes of $X_4(\dd)$ and $X_4(\dd')$ agree, in particular if $SD_4(\dd) = SD_4(\dd')$, then the Pontryagin classes $p_1$ and $p_2$ of $\xi_4(\dd)$ and $\xi_4(\dd')$ also agree. This implies that 
$\xi_4(\dd) \big| _{{\C}P^4} \cong \xi_4(\dd') \big| _{{\C}P^4}$ (by Sanderson \cite[Theorem (3.9)]{Sa} every stable bundle over ${\C}P^4$ is isomorphic to $\xi_{a,b} := a \gamma \oplus b (\gamma \otimes_{\R} \gamma)$ for some $a,b \in \Z$, and the function $(a,b) \mapsto (p_1(\xi_{a,b}),p_2(\xi_{a,b}))$ is injective). 
Thus $\xi_4(\dd') \ominus \xi_4(\dd)$ is trivial over ${\C}P^4$, so it has an $O\an{7}$-structure. 
Therefore $\Id_{{\C}P^\infty}$ has a lift $g \colon {\C}P^\infty \rightarrow B_4$ which induces $\xi_4(\dd')$ from $\xi_4(\dd) \times \gamma_{BO\an{8}}$.
Hence if $\bar{g} \colon \xi_4(\dd') \rightarrow \xi_4(\dd) \times \gamma_{BO\an{8}}$ is a bundle map over $g$, 
then $(g \circ i', \bar{g} \circ \bar{i}')$ is a normal $3$-smoothing of $X_4(\dd')$ over $(B_4, \xi_4(\dd) \times \gamma_{BO\an{8}})$.

If $SD_4(\dd) = SD_4(\dd')$, then the discussion in the paragraph above shows that $X_4(\dd)$ and $X_4(\dd')$ admit normal $3$-smoothings over $(B_4, \xi_4(\dd) \times \gamma_{BO\an{8}})$ and $\chi(X_4(\dd)) = \chi(X_4(\dd'))$, therefore to apply Proposition \ref{prop:classification_general} it is enough to prove that these
normal $3$-smoothings represent the same bordism class in $\Omega_8^{O\an{7}}({\C}P^\infty; \xi_4(\dd))$. Fang and Klaus obtained Theorem \ref{thm:FK} by showing that the difference of these bordism classes 
is in the image of the canonical homomorphism 
$i_0 \colon \Theta_8 \rightarrow \Omega_8^{O\an{7}}({\C}P^\infty; \xi_4(\dd))$.  
In the non-spin cases with $v_4(X_4(\dd)) \neq 0$, we are able to show in Sections \ref{ss:even2} 
and \ref{s:non-spin2} that the bordism classes agree.


\section{The spin case}
\label{s:spin}

In this section we prove that $4$-dimensional spin complete intersections are strongly $\Theta$-flexible, hence the Sullivan Conjecture holds for them.

\begin{defin}
For a smooth manifold $X$ and a cohomology class $\alpha \in H^2(X)$, let $E(X,\alpha)$ denote the total space of the complex line bundle over $X$ with first Chern class $\alpha$. Let $D(X,\alpha)$ denote its disc bundle and $S(X,\alpha)$ denote its sphere bundle.
\end{defin}

Recall that $x \in H^2(X_n(\dd))$ is the pullback of the standard generator of $H^2({\C}P^{\infty})$. First we will prove that for every complete intersection $X_n(\dd)$ the total space $S(X_n(\dd),x)$ admits a framing such that it is framed null-cobordant (where by a framing of a manifold we mean a trivialisation of its stable normal bundle, equivalently, of its stable tangent bundle); see Theorem \ref{thm:framed-0}.

Recall that (a representative of) the complete intersection $X_{n+1}(\dd) \subset \C P^{n+k+1}$ is the set of common zeros of some homogeneous polynomials $f_1, f_2, \ldots , f_k \in \C[x_0, x_1, \ldots , x_{n+k+1}]$. If $f_{k+1} \in \C[x_0, x_1, \ldots , x_{n+k+1}]$ is linear and its zero set $L$ is transverse to $X_{n+1}(\dd)$, then $X_n(\dd) = X_{n+1}(\dd) \cap L$. 

\begin{prop} \label{prop:par}
The complement $X_{n+1}(\dd) \setminus X_n(\dd)$ is stably parallelisable. 
\end{prop}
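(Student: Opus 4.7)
The plan is very short because the statement essentially follows from a standard principle: any smooth submanifold of Euclidean space is stably parallelisable.

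First I would use the standard fact that the complement of a hyperplane $L \subset \C P^{n+k+1}$ is diffeomorphic to affine space $\C^{n+k+1}$. Concretely, choose coordinates so that $L = \{ x_0 = 0 \}$, and on $\C P^{n+k+1} \setminus L$ set $y_i = x_i/x_0$ for $i = 1, \ldots, n+k+1$. Under this identification, the affine equations $\tilde f_j(y_1,\ldots,y_{n+k+1}) := f_j(1,y_1,\ldots,y_{n+k+1}) = 0$ cut out precisely $X_{n+1}(\dd) \setminus X_n(\dd)$. Transversality on the projective side translates to transversality on the affine side, so $X_{n+1}(\dd) \setminus X_n(\dd)$ sits as a smooth closed submanifold of $\C^{n+k+1} \cong \R^{2(n+k+1)}$.

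Next I would invoke the general principle that any smooth manifold $Y$ which embeds as a submanifold of $\R^N$ is stably parallelisable: writing $\nu$ for the normal bundle of the embedding, one has $TY \oplus \nu \cong \underline{\R}^N|_Y$, so $TY$ has a stable inverse which is a trivial bundle, i.e.\ $TY$ is stably trivial. Applying this to the embedding $X_{n+1}(\dd) \setminus X_n(\dd) \hookrightarrow \R^{2(n+k+1)}$ produced above gives the claim.

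There is no real obstacle here; the only point to check is that the zero set of the affine polynomials $\tilde f_j$ really is the intersection of $X_{n+1}(\dd)$ with the affine chart and that the transversality condition passes from $\C P^{n+k+1}$ to $\C^{n+k+1}$, both of which are immediate from the construction. Note also that the argument is independent of the choice of defining polynomials, as it only uses the underlying embedded smooth manifold.
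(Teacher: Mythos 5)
There is a genuine gap: the ``general principle'' you invoke --- that any smooth submanifold of Euclidean space is stably parallelisable --- is false. By the Whitney embedding theorem every closed manifold embeds in some $\R^N$, so the principle would imply that every manifold is stably parallelisable; $\C P^2 \subset \R^N$ (or $\R P^2 \subset \R^4$) is a counterexample. The precise point of failure in your argument is the step ``so $TY$ has a stable inverse which is a trivial bundle'': the embedding gives $TY \oplus \nu \cong \underline{\R}^N\big|_Y$, which says only that $\nu$ is a stable inverse of $TY$; to conclude that $TY$ is stably trivial you need $\nu$ itself to be stably trivial, and that does not follow from the existence of an embedding. Your closing remark that the argument ``only uses the underlying embedded smooth manifold'' is exactly what cannot be true.

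The good news is that your setup already contains the missing ingredient. The affine variety $X_{n+1}(\dd) \setminus X_n(\dd)$ is not merely embedded in $\C^{n+k+1}$: it is the preimage of $0 \in \C^k$ under the map $(\tilde f_1, \ldots, \tilde f_k) \colon \C^{n+k+1} \to \C^k$, with $0$ a regular value along it by the transversality you checked. The differentials of the $\tilde f_j$ therefore trivialise the normal bundle $\nu$, and then $TY \oplus \underline{\C}^k \cong \underline{\C}^{n+k+1}\big|_Y$ gives stable parallelisability. With that repair your route is essentially equivalent to the paper's, which instead observes that the stable normal bundle of $X_{n+1}(\dd) \setminus X_n(\dd)$ is pulled back from the contractible chart $\C P^{n+k+1} \setminus L$ and hence trivial; both arguments ultimately rest on the contractibility of the affine chart, the paper's phrasing just avoids coordinates and any appeal to regular values.
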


\begin{proof}
We have the following commutative diagram of embeddings: 
\[
\xymatrix{
\C P^{n+k+1} \setminus L \ar[r] & \C P^{n+k+1} \\
X_{n+1}(\dd) \setminus X_n(\dd) \ar[r] \ar[u]^-{i} & X_{n+1}(\dd) \ar[u]_{i}
}
\]
So $\nu_{X_{n+1}(\dd) \setminus X_n(\dd)} \cong \nu_{X_{n+1}(\dd)} \big| _{X_{n+1}(\dd) \setminus X_n(\dd)} \cong i^*(\xi_{n+1}(\dd)) \big| _{X_{n+1}(\dd) \setminus X_n(\dd)} \cong i^* \bigl( \xi_{n+1}(\dd) \big| _{\C P^{n+k+1} \setminus L} \bigr)$ (using Proposition \ref{prop:normal-b}), and this is trivial, because $\C P^{n+k+1} \setminus L$ is contractible (recall that $L$ is a hyperplane). 
\end{proof}

\begin{prop} \label{prop:nu}
We have $\nu(X_n(\dd) \rightarrow X_{n+1}(\dd)) \cong i^*(\gamma)$ (see Definition \ref{def:gamma}).
\end{prop}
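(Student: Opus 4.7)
The plan is to identify $\nu(X_n(\dd) \to X_{n+1}(\dd))$ by a direct transversality argument, exploiting that $X_n(\dd) = X_{n+1}(\dd) \cap L$ is a transverse intersection inside $\C P^{n+k+1}$. The first step is the standard fact that, for transverse submanifolds $A, B \subset M$, the normal bundle of $A \cap B$ in $A$ is canonically isomorphic to the restriction $\nu(B \to M)\big|_{A \cap B}$; this follows at once from the decomposition $T_pM \cong T_pA + T_pB$ at points $p \in A \cap B$. Applied to $A = X_{n+1}(\dd)$, $B = L$ and $M = \C P^{n+k+1}$, this yields a canonical isomorphism
\[
\nu(X_n(\dd) \to X_{n+1}(\dd)) \cong \nu(L \to \C P^{n+k+1})\big|_{X_n(\dd)}.
\]

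Next I would invoke the identification $L \cong \C P^{n+k}$ and the fact recalled in the paragraph preceding Definition \ref{def:xi-nd} that $\nu(\C P^{n+k} \to \C P^{n+k+1}) \cong \gamma\big|_{\C P^{n+k}}$. Substituting this into the previous display and restricting along the canonical embedding $i \colon X_n(\dd) \hookrightarrow L \hookrightarrow \C P^\infty$ (which factors through $L$ by the very construction $X_n(\dd) = X_{n+1}(\dd) \cap L$) gives
\[
\nu(X_n(\dd) \to X_{n+1}(\dd)) \cong \gamma\big|_L\big|_{X_n(\dd)} \cong i^*(\gamma),
\]
as required.

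There is no substantial obstacle: the proof reduces to the elementary behaviour of normal bundles under transverse intersection, combined with the well-known description of the normal bundle of a hyperplane in $\C P^{n+k+1}$. The only point worth verifying is that the embedding $X_n(\dd) \hookrightarrow L$ appearing above really agrees with the canonical embedding $i$ used throughout the paper, but this is immediate from the presentation $X_n(\dd) = X_{n+1}(\dd) \cap L$ and the definition of $i$ as the inclusion into the ambient projective space.
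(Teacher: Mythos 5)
Your proof is correct and follows essentially the same route as the paper: both arguments observe that transversality of $L$ and $X_{n+1}(\dd)$ identifies $\nu(X_n(\dd) \to X_{n+1}(\dd))$ with the restriction of $\nu(L \to \C P^{n+k+1})$, and then use that the normal bundle of a hyperplane is $\gamma$. You merely spell out the transversality lemma that the paper invokes implicitly.
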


\begin{proof}
Since $L$ is transverse to $X_{n+1}(\dd)$ and $X_n(\dd) = X_{n+1}(\dd) \cap L$, the normal bundle $\nu(X_n(\dd) \rightarrow X_{n+1}(\dd))$ is the restriction of $\nu(L \rightarrow \C P^{n+k+1})$, hence $\nu(X_n(\dd) \rightarrow X_{n+1}(\dd)) \cong \nu(L \rightarrow \C P^{n+k+1}) \big| _{X_n(\dd)} \cong \gamma \big| _{X_n(\dd)}$.
\end{proof}

\begin{thm} \label{thm:framed-0}
For any complete intersection $X_n(\dd)$, (the total space of) the $S^1$-bundle $S(X_n(\dd),x)$ 
admits a framing $F_0$ such that $[S(X_n(\dd),x), F_0]=0 \in \Omega^{\fr}_{2n+1}$. 
\end{thm}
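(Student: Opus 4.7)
The plan is to exhibit a stably parallelizable $(2n{+}2)$-manifold whose boundary is $S(X_n(\dd), x)$; a choice of stable parallelization then restricts to the desired framing $F_0$ of the boundary and simultaneously provides a framed null-cobordism, so that $[S(X_n(\dd), x), F_0] = 0 \in \Omega_{2n+1}^{\fr}$.

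First, I would use Proposition \ref{prop:nu} together with $c_1(\gamma) = x$ to identify a closed tubular neighborhood of $X_n(\dd)$ in $X_{n+1}(\dd)$ with the disc bundle $D(X_n(\dd), x)$, whose boundary sphere bundle is $S(X_n(\dd), x)$. Then set $W := X_{n+1}(\dd) \setminus \interior D(X_n(\dd), x)$, a compact $(2n{+}2)$-dimensional submanifold of $X_{n+1}(\dd)$ with $\partial W = S(X_n(\dd), x)$.

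The key point is that $W$ sits inside the open complement $X_{n+1}(\dd) \setminus X_n(\dd)$, which is stably parallelizable by Proposition \ref{prop:par}. Hence $TW$ is stably trivial; choosing a stable trivialization and restricting to the boundary gives a stable trivialization of $T(\partial W) \oplus \underline{\R}$, where $\underline{\R}$ is the trivial line bundle spanned by the inward normal. This is precisely a stable framing $F_0$ of $S(X_n(\dd), x)$, and by construction $W$ together with the chosen stable parallelization is a framed null-cobordism of $(S(X_n(\dd), x), F_0)$.

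I do not expect any serious obstacle: the argument is a direct geometric consequence of the two preparatory propositions, and the only routine point is the standard fact that a stable parallelization of a codimension-$0$ compact submanifold restricts, via the inward normal, to a stable framing of the boundary.
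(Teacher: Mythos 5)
Your proposal is correct and follows exactly the paper's own argument: identify a tubular neighbourhood of $X_n(\dd)$ in $X_{n+1}(\dd)$ with $D(X_n(\dd),x)$ via Proposition \ref{prop:nu}, observe that its complement lies in the stably parallelisable open set $X_{n+1}(\dd) \setminus X_n(\dd)$ of Proposition \ref{prop:par}, and restrict a stable framing of that complement to the boundary. No differences worth noting.
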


\begin{proof}
Let $U$ be a tubular neighbourhood of $X_n(\dd)$ in $X_{n+1}(\dd)$. By Proposition \ref{prop:nu} it is diffeomorphic to the disc bundle of $i^*(\gamma)$, whose first Chern class is $x$, therefore $\partial U \approx S(X_n(\dd),x)$. Its complement, $X_{n+1}(\dd) \setminus \interior U$ is a codimension-$0$ submanifold in $X_{n+1}(\dd) \setminus X_n(\dd)$. The latter is stably parallelisable by Proposition \ref{prop:par}, so $X_{n+1}(\dd) \setminus \interior U$ is stably parallelisable too. If we choose $F_0$ to be the restriction of a framing of $X_{n+1}(\dd) \setminus \interior U$ to the boundary $\partial(X_{n+1}(\dd) \setminus \interior U) \approx \partial U \approx S(X_n(\dd),x)$, then $(S(X_n(\dd),x),F_0)$ is framed null-cobordant. 
\end{proof}

The goal of the rest of this section is to prove that $S(X_4(\dd) \sharp \Sigmaex,x)$ is not framed nullcobordant (with any framing) if $X_4(\dd)$ is spin; see Theorem \ref{thm:fr-all}. First we show that, when an $m$-manifold $X$ is replaced by $X \sharp \Sigma$ for a homotopy $m$-sphere $\Sigma$, the framed cobordism class of $S(X,\alpha)$ changes by $\Sigma \times S^1$ (with a certain choice of framings); see Lemma \ref{lem:sum}. In Lemma \ref{lem:s1-fr} we give a formula to compute the framing of the $S^1$ component. By applying this formula we prove that if $X_4(\dd)$ is spin, then $S(X_4(\dd) \sharp \Sigmaex,x)$ has a framing such that it is not framed nullcobordant (Theorem \ref{thm:fr-exist}). Finally we show that we cannot make the framed cobordism class vanish by changing the framing.

\begin{lem} \label{lem:sum}
Suppose that $m \geq 3$, $X$ is an $m$-manifold, $\alpha \in H^2(X)$ and $F_0$ is a framing of $S(X,\alpha)$. Then there exists a framing $F_2$ of $S^1$ such that for every $\Sigma \in \Theta_m$ and framing $F_1$ of $\Sigma$ there is a framing $F$ of $S(X \sharp \Sigma,\alpha)$ such that 
\[
[S(X,\alpha), F_0] + [\Sigma \times S^1, F_1 \times F_2] = [S(X \sharp \Sigma,\alpha), F] \in \Omega_{m+1}^{\fr} \text{\,.}
\]
\end{lem}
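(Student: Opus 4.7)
The plan is to construct an $(m{+}2)$-dimensional framed manifold $W$ whose boundary is $S(X,\alpha) \sqcup (\Sigma \times S^1) \sqcup S(X \sharp \Sigma, \alpha)$, with framings restricting to $F_0$, $F_1 \times F_2$, and the desired $F$ on the three components; such a $W$ directly witnesses the claimed equation in $\Omega^{\fr}_{m+1}$.

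To build $W$, first I would construct an $(m{+}1)$-dimensional cobordism $V$ from $X \sqcup \Sigma$ to $X \sharp \Sigma$ in the standard way, by attaching a 1-handle $h = D^1 \times D^m$ to the top of $(X \sqcup \Sigma) \times [0,1]$ along small disks in $X \times \{1\}$ and $\Sigma \times \{1\}$. Since $m \geq 3$ and $\Sigma$ is a homotopy sphere, $H^2(\Sigma) = 0$, so $\alpha$ extends uniquely up to isomorphism to a complex line bundle $\tilde{\alpha}$ on $V$ that is trivial over $\Sigma \times [0,1]$ and over $h$. Setting $W := S(V, \tilde{\alpha})$, one obtains a smooth $(m{+}2)$-manifold with the required boundary, and the triviality of $\tilde{\alpha}$ off $X \times [0,1]$ yields the decomposition
\[
W \;=\; \bigl( S(X,\alpha) \times [0,1] \bigr) \;\cup\; \bigl( (\Sigma \times S^1) \times [0,1] \bigr) \;\cup\; \bigl( D^1 \times D^m \times S^1 \bigr),
\]
where the thickened-fibre piece $D^1 \times D^m \times S^1$ is glued along $\{\pm 1\} \times D^m \times S^1$ to trivialising regions at the top of the two cylinders.

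Next I would frame $W$ piecewise. The two cylinders carry the obvious product framings built from $F_0$ and $F_1 \times F_2$ together with $\partial_t$ on $[0,1]$, for any choice of stable framing $F_2$ of $S^1$. The handle piece $D^1 \times D^m \times S^1$ deformation retracts onto $S^1$, so its stable framings form a torsor over $[S^1, SO] = \pi_1(SO) = \Z/2$, and any such framing extends to the whole piece. The only obstruction to gluing is that the induced framings on the two attaching regions $\{\pm 1\} \times D^m \times S^1$ must agree, since any framing of the handle restricts to the same class on both ends. Because $D^m$ is contractible, stable framings of $D^m \times S^1$ also form a torsor over $\pi_1(SO) = \Z/2$: the restriction of $F_0$ gives one class, while the restriction of $F_1 \times F_2$ gives a class that depends only on $F_2$, since the $F_1$-contribution near a point is unique up to homotopy. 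Selecting the correct one of the two stable framings of $S^1$ for $F_2$ matches these classes, and the three piecewise framings then glue to a stable framing of $W$. Its restriction to $S(X \sharp \Sigma, \alpha) \subset \partial W$ is the desired framing $F$.

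The main obstacle I expect is the matching step above: verifying that the discrepancy between the two induced framings on $D^m \times S^1$ lives exactly in the $\pi_1(SO)$-factor that is controlled by $F_2$, and that no higher-dimensional obstruction interferes. The assumption $m \geq 3$ enters twice, both to guarantee uniqueness of the extension $\tilde{\alpha}$ over $V$ via $H^2(\Sigma) = 0$, and to ensure that the connected sum $X \sharp \Sigma$ is well-defined up to diffeomorphism.
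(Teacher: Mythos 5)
Your proposal is correct and follows essentially the same route as the paper: the manifold $W = S(V,\tilde{\alpha})$ you build is exactly the paper's cobordism $(S(X,\alpha) \sqcup \Sigma \times S^1) \times I \cup_f (D^m \times S^1 \times I)$, and the framing is glued in the same way, with $F_2$ pinned down by matching the restriction of $F_0$ on a trivialising region $D^m \times S^1$ against $E_m \times F_2$ via the $\pi_1(SO) \cong \Z/2$ torsor of framings. The "main obstacle" you flag is not an issue, since the handle piece retracts to $S^1$ and restriction to either attaching region is a bijection on homotopy classes of framings, so no higher obstruction can appear.
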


\begin{proof}
Fix an embedding $D^m \rightarrow X$ where the connected sum is done. There is a homotopically unique homeomorphism between $X$ and $X \sharp \Sigma$ that is the identity on $X \setminus \interior D^m$, so there is a canonical isomorphism $H^2(X) \cong H^2(X \sharp \Sigma)$, so $\alpha$ can be regarded as an element of $H^2(X \sharp \Sigma)$, so $S(X \sharp \Sigma,\alpha)$ makes sense. The homomorphisms $H^2(X) \rightarrow H^2(X \setminus \interior D^m) \leftarrow H^2(X \sharp \Sigma)$ are injective (isomorphisms if $m \geq 4$), therefore $S(X \sharp \Sigma,\alpha)$ is (the total space of) the unique $S^1$-bundle over $X \sharp \Sigma$ whose restriction to $X \setminus \interior D^m$ is isomorphic to that of $S(X,\alpha)$. 

Let $W = (S(X,\alpha) \sqcup \Sigma \times S^1) \times I \cup_f (D^m \times S^1 \times I)$, where the gluing map 
\[
f \colon D^m \times S^1 \times \partial I 
\rightarrow (S(X,\alpha) \sqcup \Sigma \times S^1) \times \{ 1 \}
\]
is the disjoint union of the (homotopically unique) local trivialisation $f_0 \colon D^m \times S^1 \times \{ 0 \} \rightarrow S(X,\alpha) \times \{ 1 \}$ of $S(X,\alpha)$ over the fixed $D^m$ and the product map $f_1 \colon D^m \times S^1 \times \{ 1 \} \rightarrow \Sigma \times S^1 \times \{ 1 \}$, where $D^m \rightarrow \Sigma$ is the embedding used to construct the connected sum $X \sharp \Sigma$. Then $\partial W = \partial_- W \sqcup \partial_+ W$, where $\partial_- W = (S(X,\alpha) \sqcup \Sigma \times S^1) \times \{ 0 \}$ and $\partial_+ W = (S(X,\alpha) \setminus (\interior D^m \times S^1) \sqcup (\Sigma \setminus \interior D^m) \times S^1) \times \{ 1 \} \cup_f S^{m-1} \times S^1 \times I$. Thus $\partial_+ W$ is an $S^1$-bundle over $(X \setminus \interior D^m) \cup S^{m-1} \times I \cup (\Sigma \setminus \interior D^m) \approx X \sharp \Sigma$ and it coincides with $S(X,\alpha)$ over $X \setminus \interior D^m$, therefore $\partial_+ W \approx S(X \sharp \Sigma,\alpha)$. 

The inclusion $S(X,\alpha) \times \{ 0 \} \rightarrow S(X,\alpha) \times I \cup_{f_0} D^m \times S^1 \times I$ is a homotopy equivalence, covered by a bundle map between the stable normal bundles, therefore the framing $F_0$ can be extended to a framing of $S(X,\alpha) \times I \cup_{f_0} D^m \times S^1 \times I$. The restriction of this framing to $D^m \times S^1 \times \{ 1 \}$ is $E_m \times F_2$, where $E_m$ is the homotopically unique framing of $D^m$ and $F_2$ is some framing of $S^1$ (because every framing of $D^m \times S^1$ is of this form). Similarly, we can take the framing $F_1 \times F_2$ of $\Sigma \times S^1$ and extend it to $\Sigma \times S^1 \times I$. The restriction of this framing to $D^m \times S^1 \times \{ 1 \}$ is again $E_m \times F_2$ (up to homotopy), therefore the framings of $S(X,\alpha) \times I \cup_{f_0} D^m \times S^1 \times I$ and $\Sigma \times S^1 \times I$ together determine a framing of $W$. Let $F$ denote its restriction to $\partial_+ W \approx S(X \sharp \Sigma,\alpha)$. Then $W$ is a framed cobordism between the framed manifolds $(S(X,\alpha),F_0) \sqcup (\Sigma \times S^1,F_1 \times F_2)$ and $(S(X \sharp \Sigma,\alpha),F)$.
\end{proof}

\begin{lem} \label{lem:s1-fr}
Suppose that, in addition to the assumptions of Lemma \ref{lem:sum}, there is an $[a] \in \pi_2(X)$ such that $\left< \alpha, \rho([a]) \right> = 1$, where $\rho \colon \pi_2(X) \rightarrow H_2(X)$ is the Hurewicz homomorphism. Then for any such $[a] \in \pi_2(X)$ and the framing $F_2$ constructed in the proof of Lemma \ref{lem:sum} we have 
\[
[S^1,F_2] = \left< w_2(X), \rho([a]) \right> + 1 \text{\,,}
\]
where both sides are regarded as elements of $\Z/2$ (using that $\Omega_1^{\fr} \cong \Z/2$). 
\end{lem}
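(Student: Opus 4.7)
My plan is to use the hypothesis $\langle h([a]), \alpha\rangle = 1$ to pull back the circle bundle $S(X, \alpha) \to X$ along a representative $a \colon S^2 \to X$, obtaining the Hopf fibration $\pi' \colon S^3 \to S^2$ together with a bundle map $\widetilde{a} \colon S^3 \to S(X, \alpha)$ which restricts to a diffeomorphism on each fibre. Under $\widetilde{a}$ I would identify $F_2$ with the framing induced by $\widetilde{a}^* F_0$ on a Hopf fibre $S^1_q \subset S^3$.

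I would first show that $[S^1, F_2] \in \Omega_1^{\fr} \cong \Z/2$ does not depend on the choice of $F_0$. Any two framings of $S(X, \alpha)$ differ by a gauge transformation $g \colon S(X, \alpha) \to O$, and the two corresponding circle framings differ by $g$ restricted to a fibre. The hypothesis on $[a]$ implies via the long exact homotopy sequence of the bundle that the fibre $S^1 \hookrightarrow S(X, \alpha)$ is null-homotopic, so the restriction $[S(X, \alpha), O] \to [S^1, O] \cong \Z/2$ vanishes. This lets me replace $F_0$ by a convenient reference framing when needed.

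Next I would compare $\widetilde{a}^* F_0$ with the left-invariant (``Lie'') framing $F_L$ of $S^3$. Since the Hopf fibres of $S^3 \to S^3/S^1 = S^2$ are the left cosets of $S^1 \subset S^3$, the restriction $F_L|_{S^1_q}$ is the Lie framing of a circle, with framed bordism class $1 \in \Z/2$. After stabilisation the difference between $\widetilde{a}^* F_0$ and $F_L$ amounts to a stable trivialisation of $(\pi')^{*}(a^{*}TX \ominus TS^2)$ over $S^3$. This virtual bundle is stably trivial over $S^3$ because $(\pi')^{*} \colon \widetilde{KO}(S^2) \to \widetilde{KO}(S^3) = 0$ is the zero map, while in $\widetilde{KO}(S^2) \cong \Z/2$ the class of $a^{*}TX \ominus TS^2$ is $w_2(a^{*}TX) = \langle h([a]), w_2(X)\rangle$. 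The ambiguity in the trivialisation lies in $\pi_3(O) \cong \Z$, which restricts to zero on the null-homotopic fibre $S^1_q \subset S^3$; so restricting to $S^1_q$ produces a canonical element of $\pi_1(O) \cong \Z/2$ depending only on the class of $a^{*}TX \ominus TS^2$ in $\widetilde{KO}(S^2)$.

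The main obstacle is to identify the resulting canonical map $\tau \colon \widetilde{KO}(S^2) \to \pi_1(O)$ as the identity on $\Z/2$. It suffices to check that it is non-trivial on the generator of $\widetilde{KO}(S^2)$, namely the conjugate tautological complex line bundle $\gamma$ on $\C P^1 = S^2$ viewed as a real rank-$2$ bundle. The dual bundle $(\pi')^{*}\gamma^{*}$ admits the canonical nowhere-zero section $s(z_1, z_2) = (z_1, z_2) \in \C\cdot(z_1, z_2)$, and the resulting trivialisation of $(\pi')^{*}\gamma$, restricted to the Hopf fibre $\{\lambda(z_1, z_2) : \lambda \in S^1\}$, rotates by $\lambda$ once around $S^1$ and therefore represents the generator of $\pi_1(SO(2)) \to \pi_1(O) \cong \Z/2$. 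Combining everything then gives $[S^1, F_2] = [S^1, F_L|_{S^1_q}] + \tau(a^{*}TX \ominus TS^2) = 1 + \langle h([a]), w_2(X)\rangle$ in $\Z/2$, as claimed.
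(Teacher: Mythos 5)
Your argument is correct, but it takes a genuinely different route from the paper's. The paper works inside the total space $E(X,\alpha)$: it caps off a fibre disc of $D(X,\alpha)$ with an embedded lift $\bar a(D^2) \subset S(X,\alpha)\times\R^+_0$ of $a$, producing an embedded $2$-sphere whose tubular neighbourhood is the union of two product neighbourhoods carrying the framings $E_m\times(E_2\big|_{S^1})$ and $E_m\times F_2$; the class $[S^1,F_2]$ is then read off as the clutching element of this $D^m$-bundle over $S^2$ in $\pi_2(BSO)\cong\Z/2$, computed in one stroke as $\left< h([a]), w_2(a^*(E(X,\alpha)\oplus TX))\right> = \left< h([a]), \varrho_2(\alpha)+w_2(X)\right>$. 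You instead pull everything back along $a$ to the Hopf fibration $S^3\to S^2$, take the Lie framing of $S^3$ as a reference (this supplies the ``$+1$'' which in the paper comes from $w_2(E(X,\alpha))=\varrho_2(\alpha)$ paired with $h([a])$), and isolate the tangent-bundle contribution as a homomorphism $\widetilde{KO}(S^2)\to\pi_1(O)$ evaluated by an explicit clutching computation for the tautological line bundle. Both proofs ultimately rest on the same two facts --- that the fibre circle is null-homotopic, which kills the $\pi_3(O)$ (resp.\ contractibility-of-the-tubular-neighbourhood) ambiguity, and that $\pi_1(SO)\cong\Z/2$ is detected by $w_2$ over a $2$-sphere --- so the approaches are parallel in mechanism, but your decomposition makes the ``$+1$'' visibly the Hopf/Lie-framing term while the paper absorbs it into $w_2$ of the line bundle. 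The cost on your side is some suppressed bookkeeping: one must check that the normal trivialisation of the fibre implicit in the definition of $F_2$ (via the local trivialisation $f_0$) matches the canonical pullback trivialisation of $\nu(S^1_q\subset S^3)$ under $\tilde a$, and that the left-invariant normal frame along a Hopf fibre agrees with that trivialisation up to an even winding (trivial in $\pi_1(O)$). Also, your ``canonical nowhere-zero section'' $s(z_1,z_2)=(z_1,z_2)$ is a section of the pulled-back tautological bundle rather than of its dual; this is harmless since only the mod $2$ winding number matters, but the statement should be adjusted.
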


\begin{proof}
Fix a local trivialisation $f_0 \colon D^m \times S^1 \rightarrow S(X,\alpha)$, as in the proof of Lemma \ref{lem:sum}. The framing $F_2$ is defined by the property that the restriction of $F_0$ to $f_0(D^m \times S^1)$ is $E_m \times F_2$ (throughout this proof we will identify the framings of $f_0(D^m \times S^1)$ with the framings of $D^m \times S^1$ via (the derivative of) $f_0$). First we will give another characterisation of $E_m \times F_2$.

If $\partial \colon \pi_2(X) \rightarrow \pi_1(S^1) \cong \Z$ denotes the boundary map 
in the homotopy long exact sequence of the fibration $S^1 \rightarrow S(X,\alpha) \rightarrow X$, then $\partial([a]) = \left< \alpha, \rho([a]) \right>$ (this holds if $X=S^2$ and $a = \Id_{S^2}$, because $\alpha$ is the Euler class of $E(S^2,\alpha)$, and in general $a \colon S^2 \rightarrow X$ induces a commutative diagram between the exact sequences). Moreover, $\partial$ is the composition of the isomorphism $\pi_2(X) \cong \pi_2(S(X,\alpha), S^1)$ and the boundary map $\pi_2(S(X,\alpha), S^1) \rightarrow \pi_1(S^1)$. Therefore for any $[a]$ with $\left< \alpha, \rho([a]) \right> = 1$ there is a map $\tilde{a} \colon D^2 \rightarrow S(X,\alpha)$ (well-defined up to homotopy) such that $\tilde{a} \big| _{S^1}$ is the inclusion of a fibre and $(\tilde{a}, \tilde{a} \big| _{S^1})$ represents the element in $\pi_2(S(X,\alpha), S^1)$ corresponding to $[a] \in \pi_2(X)$. We can lift $\tilde{a}$ to a map $\bar{a} \colon D^2 \rightarrow S(X,\alpha) \times \R^+_0$ (where $\R^+_0$ denotes $[0,\infty)$) such that $\bar{a}$ is an embedding, it is transverse to $S(X,\alpha) \times \{ 0 \}$, $\bar{a}^{-1}(S(X,\alpha) \times \{ 0 \}) = S^1$, $\bar{a} \big| _{S^1} \colon S^1 \rightarrow S(X,\alpha) \times \{ 0 \}$ is the inclusion of a fibre and $[\bar{a}, \bar{a} \big| _{S^1}] = [\tilde{a}, \tilde{a} \big| _{S^1}] \in \pi_2(S(X,\alpha) \times \R^+_0, S^1 \times \R^+_0) \cong \pi_2(S(X,\alpha), S^1)$. 

Let $U$ be a tubular neighbourhood of $\bar{a}(D^2)$ in $S(X,\alpha) \times \R^+_0$, we can assume that $U \cap S(X,\alpha) \times \{ 0 \} = f_0(D^m \times S^1)$. (Note that $U$ is the total space of a $D^m$-bundle over $D^2$, so it has a homotopically unique trivialisation $D^2 \times D^m \rightarrow U$, but the restriction of this trivialisation to $S^1$ may differ from $f_0$, the difference is given by an element of $\pi_1(SO_m) \cong \Z/2$.) The framing $F_0$ can be extended to a framing of $S(X,\alpha) \times \R^+_0$ and then restricted to a framing of $U$. As mentioned above, if we further restrict this framing to $f_0(D^m \times S^1)$, we get $E_m \times F_2$. Since $U$ is contractible, it has a homotopically unique framing, so this means that $E_m \times F_2$ is the restriction of the homotopically unique framing of $U$. 

The local trivialisation $f_0$ is the restriction of a local trivialisation $\bar{f}_0 \colon D^m \times D^2 \rightarrow D(X,\alpha)$. The homotopically unique framing of $\bar{f}_0(D^m \times D^2)$ is $E_m \times E_2$; its restriction to $f_0(D^m \times S^1)$ is $E_m \times (E_2 \big| _{S^1})$. Since $(S^1, E_2 \big| _{S^1})$ is the framed boundary of $(D^2, E_2)$, $[S^1, E_2 \big| _{S^1}] = 0 \in \Omega_1^{\fr} \cong \Z/2$. So if $g \in \pi_1(SO) \cong \Z/2$ denotes the difference of the framings $F_2$ and $E_2 \big| _{S^1}$ of $S^1$, then $[S^1, F_2] = g \in \Z/2$. 

We have $D(X,\alpha) \cup_{S(X,\alpha)} S(X,\alpha) \times \R^+_0 \approx E(X,\alpha)$ (in each fibre $D^2 \cup_{S^1} S^1 \times \R^+_0 \approx \R^2$) and $\bar{f}_0(D^m \times D^2) \cup U$ is a tubular neighbourhood of $\bar{f}_0(\{ 0 \} \times D^2) \cup \bar{a}(D^2) \approx S^2$ in $E(X,\alpha)$. As a $D^m$-bundle over $S^2$ it is classified by an element of $\pi_2(BSO_m) \cong \Z/2$. Under the isomorphism $\pi_2(BSO_m) \cong \pi_2(BSO) \cong \pi_1(SO)$ this element corresponds to $g$ (because it is equal to the difference of the restrictions of the unique framings of $\bar{f}_0(D^m \times D^2)$ and $U$, which are $E_m \times (E_2 \big| _{S^1})$ and $E_m \times F_2$ respectively). 

So we need to determine the normal bundle of the embedding $S^2 \rightarrow E(X,\alpha)$ as an element of $\pi_2(BSO)$. Since $S^2$ is stably parallelisable, it is the same as the restriction of the stable tangent bundle $\tau_{E(X,\alpha)}$ to $S^2$. The embedding $S^2 \rightarrow E(X,\alpha)$ is homotopic to its projection to the zero section ($X$). Since $\bar{f}_0(\{ 0 \} \times D^2)$ is a fibre of $D(X,\alpha)$, its projection to $X$ is one point. The map $\bar{a} \colon D^2 \rightarrow S(X,\alpha) \times \R^+_0$ is a lift of $\tilde{a} \colon D^2 \rightarrow S(X,\alpha)$, which is a lift of a map $a \colon S^2 \rightarrow X$ representing $[a] \in \pi_2(X)$. Therefore the composition of the embedding $S^2 \rightarrow E(X,\alpha)$ and the projection to $X$ is $a$. The restriction of $\tau_{E(X,\alpha)}$ to $X$ is $E(X,\alpha) \oplus \tau_X$. So the bundle we are interested in is the pullback of $E(X,\alpha) \oplus \tau_X$ by $a$.

The second Stiefel-Whitney class detects $\pi_2(BSO)$, so 
\begin{multline*}
g = \left< w_2(a^*(E(X,\alpha) \oplus \tau_X)), [S^2] \right> = \left< w_2(E(X,\alpha) \oplus \tau_X), a_*([S^2]) \right> = {} \\
\left< w_2(E(X,\alpha)) + w_2(\tau_X), \rho([a]) \right> = \left< \varrho_2(\alpha), \rho([a]) \right> + \left< w_2(X), \rho([a]) \right> = 1 + \left< w_2(X), \rho([a]) \right>,
\end{multline*}
where $\varrho_2 \colon H^2(X) \rightarrow H^2(X; \Z/2)$ denotes reduction mod $2$ and we used that $E(X,\alpha)$ is a complex line bundle, so $w_1(E(X,\alpha)) = 0$ and $w_2(E(X,\alpha)) = \varrho_2(c_1(E(X,\alpha))) = \varrho_2(\alpha)$ and that $\left< \alpha, \rho([a]) \right> = 1$. 

We already saw that $g$ corresponds to $[S^1,F_2]$, so the statement follows. 
\end{proof}

\begin{prop} \label{prop:product}
If $F_2$ is the (homotopically unique) framing of $S^1$ such that $[S^1,F_2]$ is the non-trivial element in 
$\Omega^{\fr}_1 \cong \Z/2$, and $F_1$ is any framing of $\Sigmaex$, then $[\Sigmaex \times S^1, F_1 \times F_2] \neq 0 \in \Omega_9^{\fr}$. Moreover, $[\Sigmaex \times S^1, F_1 \times F_2]$ is not contained in the image of the $J$-homomorphism $J_9 \colon \pi_9(SO) \rightarrow \Omega_9^{\fr}$.
\end{prop}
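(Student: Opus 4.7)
The plan is to exploit the ring structure on $\Omega^{\fr}_* \cong \pi^s_*$: with $\eta = [S^1, F_2] \in \pi^s_1$, the product framing gives
\[
[\Sigmaex \times S^1,\, F_1 \times F_2] \;=\; [\Sigmaex,\, F_1] \cdot \eta \in \pi^s_9.
\]
Both conclusions of the proposition will follow immediately once I show that this class projects to the non-zero element of $\coker(J_9) := \pi^s_9/\im(J_9)$.

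First I would observe that changing $F_1$ alters $[\Sigmaex, F_1] \in \pi^s_8$ by an element of $\im(J_8)$, because any two stable framings of $\Sigmaex$ differ by a map to $O$. Consequently $[\Sigmaex, F_1]$ is well-defined in $\coker(J_8)$. By the Kervaire--Milnor exact sequence \cite{K-M} the natural map $\Theta_8 \to \coker(J_8)$ is injective (since $\mathrm{bP}_9 = 0$), so the non-trivial element $\Sigmaex \in \Theta_8 \cong \Z/2$ represents the non-zero class of $\coker(J_8) \cong \Z/2$. Since $\eta \in \im(J_1)$ and $\im(J) \subset \pi^s_*$ is a subring, multiplication by $\eta$ descends to a homomorphism $\coker(J_8) \to \coker(J_9)$.

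The crux is then to verify that this descended $\eta$-multiplication is non-trivial. From Toda's computations of the stable stems, $\pi^s_9 \cong (\Z/2)^3$, $\im(J_9) \cong (\Z/2)^2$, and $\coker(J_9) \cong \Z/2$ generated by (the class of) $\nu^3$. The required non-triviality reduces to the classical identity $\eta \cdot [\Sigmaex, F_1] \equiv \nu^3 \pmod{\im(J_9)}$, which can be read off from standard Toda relations (e.g.\ $\eta \epsilon = \nu^3$ in $\pi_9^s$) together with the identification of the class of $\Sigmaex$ in $\coker(J_8)$. I anticipate this stable-homotopy computation to be the main obstacle of the argument, although the relevant relations are well-documented.

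Combining these steps, $[\Sigmaex \times S^1,\, F_1 \times F_2]$ projects to the generator of $\coker(J_9) \cong \Z/2$, which shows at once that the class is non-zero in $\Omega^{\fr}_9$ and does not lie in $\im(J_9)$, as required.
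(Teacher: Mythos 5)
Your strategy is the same as the paper's: pin down $[\Sigmaex,F_1]\in\pi^s_8$ up to the framing ambiguity, use the product structure to write $[\Sigmaex\times S^1,F_1\times F_2]=\eta\cdot[\Sigmaex,F_1]$, and check that the result avoids $\Image J_9$. Your conclusion is correct, but since you single out the stable computation as the crux, you should know that several of the facts you quote there are wrong. First, $\pi_9(SO)\cong\Z/2$, so $\Image J_9\cong\Z/2$ (generated by $\eta^2\sigma$), not $(\Z/2)^2$; consequently $\coker(J_9)\cong(\Z/2)^2$, not $\Z/2$ --- its basis is given by the classes of $\nu^3$ and $\mu$. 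Second, $\eta\epsilon=\nu^3$ is \emph{not} an identity in $\pi^s_9$: Toda's $\nu^3$, $\eta\epsilon$, $\mu$ form a $\Z/2$-basis of $\pi^s_9\cong(\Z/2)^3$, so they are distinct. The correct relations are $\eta\sigma=\bar\nu+\epsilon$ and $\eta\bar\nu=\nu^3$, whence $\eta^2\sigma=\nu^3+\eta\epsilon$; thus $\eta\epsilon\equiv\nu^3$ only modulo $\Image J_9$. With these corrections your argument does close up, and in fact collapses to the paper's proof: $[\Sigmaex,F_1]$ is one of the two elements $\epsilon$, $\epsilon+\eta\sigma$ of $\pi^s_8\setminus\Image J_8$ (your appeal to $bP_9=0$ and injectivity of $\Theta_8\to\coker(J_8)$ is the right justification), so the product is $\eta\epsilon$ or $\eta\epsilon+\eta^2\sigma=\nu^3$, and neither of these lies in $\{0,\eta^2\sigma\}=\Image J_9$. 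I would also phrase the descent of $\eta$-multiplication more carefully: rather than asserting that $\Image J$ is a subring, it suffices to note $\eta\cdot\eta\sigma=\eta^2\sigma\in\Image J_9$, which is the only instance you need.
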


\begin{proof}
It follows from Kervaire-Milnor \cite[Section 4 and Theorem 5.1]{K-M} that $[\Sigmaex,F_1] \not\in \Image J_8$. Under the Pontryagin-Thom isomorphism the map ${} \times [S^1, F_2] \colon \Omega_8^{\fr} \rightarrow \Omega_9^{\fr}$ corresponds to ${} \cdot \eta \colon \pi^s_8 \rightarrow \pi^s_9$, which is injective by Toda \cite[p.\ 189 and Theorem 14.1 i)]{T}. By Adams \cite[Proof of Example 12.15]{A}, we have $\Image J_9 = (\Image J_8)\eta$. Therefore 
\begin{multline*}
[\Sigmaex, F_1] \times [S^1, F_2] \in (\Omega_8^{\fr} \setminus \Image J_8) \times [S^1, F_2] = (\Omega_8^{\fr} \times [S^1, F_2]) \setminus (\Image J_8 \times [S^1, F_2]) = \\
(\Omega_8^{\fr} \times [S^1, F_2]) \setminus \Image J_9 \subseteq \Omega_9^{\fr} \setminus \Image J_9 .
\end{multline*}
In particular $[\Sigmaex, F_1] \times [S^1, F_2] \neq 0$.
\end{proof}

\begin{thm} \label{thm:fr-exist}
If $X_4(\dd)$ is spin, then there is a framing $F$ such that $[S(X_4(\dd) \sharp \Sigmaex,x),F] \neq 0 \in \Omega_9^{\fr}$. 
\end{thm}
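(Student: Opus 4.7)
The plan is to assemble the four preceding results of this section in the obvious way: Theorem \ref{thm:framed-0} kills the framed bordism class of $S(X_4(\dd), x)$, Lemma \ref{lem:sum} expresses the class of $S(X_4(\dd) \sharp \Sigmaex, x)$ under a specific framing as a sum of the class of $S(X_4(\dd), x)$ and a class of the form $[\Sigmaex \times S^1, F_1 \times F_2]$, Lemma \ref{lem:s1-fr} identifies $F_2$ as the nontrivial framing of $S^1$ using that $X_4(\dd)$ is spin, and finally Proposition \ref{prop:product} says that such a product is nonzero in $\Omega_9^{\fr}$.

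First I would take the framing $F_0$ on $S(X_4(\dd), x)$ provided by Theorem \ref{thm:framed-0}, for which $[S(X_4(\dd), x), F_0] = 0$. Applying Lemma \ref{lem:sum} with $X = X_4(\dd)$, $\alpha = x$, $\Sigma = \Sigmaex$, and any framing $F_1$ of $\Sigmaex$ yields a framing $F_2$ of $S^1$ (determined by $F_0$) and a framing $F$ of $S(X_4(\dd) \sharp \Sigmaex, x)$ with
\[
[S(X_4(\dd) \sharp \Sigmaex, x), F] = [S(X_4(\dd), x), F_0] + [\Sigmaex \times S^1, F_1 \times F_2] = [\Sigmaex \times S^1, F_1 \times F_2] \in \Omega_9^{\fr}.
\]
To invoke Lemma \ref{lem:s1-fr} I need an element $[a] \in \pi_2(X_4(\dd))$ with $\langle h([a]), x \rangle = 1$. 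This exists because by the Lefschetz Hyperplane Theorem, $X_4(\dd)$ is simply connected and the inclusion $X_4(\dd) \to \C P^\infty$ induces an isomorphism on $H_2$; so the Hurewicz map $\pi_2(X_4(\dd)) \to H_2(X_4(\dd)) \cong \Z$ is an isomorphism, and $x$ evaluates to $1$ on the generator. Since $X_4(\dd)$ is spin, $w_2(X_4(\dd)) = 0$ (Proposition \ref{prop:SW-classes}), hence Lemma \ref{lem:s1-fr} gives
\[
[S^1, F_2] = \langle h([a]), w_2(X_4(\dd)) \rangle + 1 = 1 \in \Omega_1^{\fr} \cong \Z/2,
\]
i.e.\ $F_2$ is the nontrivial framing of $S^1$. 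Proposition \ref{prop:product} then directly gives $[\Sigmaex \times S^1, F_1 \times F_2] \neq 0$, producing the required framing $F$.

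There is no real obstacle in this theorem beyond the ones already dispatched: everything is a careful bookkeeping of the lemmas. The only conceptual point to highlight is where the spin hypothesis enters, namely in forcing $[S^1, F_2]$ to equal the nontrivial class rather than $0$; if $X_4(\dd)$ were not spin then Lemma \ref{lem:s1-fr} would instead give $[S^1, F_2] = 0$ and the framed bordism class would reduce to $[\Sigmaex \times S^1, F_1 \times (\text{nullbordant framing})]$, which is easily null-bordant, so the argument would collapse. Thus the statement of Theorem \ref{thm:fr-exist} is essentially a direct consequence of combining the four prior results, with the spin hypothesis acting as the single pivotal input.
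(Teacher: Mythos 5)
Your proof is correct and follows essentially the same route as the paper's: take the null-bordant framing $F_0$ from Theorem \ref{thm:framed-0}, apply Lemma \ref{lem:sum}, use the spin hypothesis via Lemma \ref{lem:s1-fr} to identify $F_2$ as the non-bounding framing of $S^1$, and conclude with Proposition \ref{prop:product}. Your justification for the existence of $[a]$ with $\langle h([a]), x\rangle = 1$ is slightly more detailed than the paper's, but the argument is the same.
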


\begin{proof}
Let $F_0$ be a framing of $S(X_4(\dd),x)$ such that $[S(X_4(\dd),x),F_0]=0$ (see Theorem \ref{thm:framed-0}). Let $F_1$ be any framing of $\Sigmaex$. By Lemma \ref{lem:sum} there are framings $F_2$ and $F$ such that $[S(X_4(\dd) \sharp \Sigmaex,x),F] = [S(X_4(\dd),x),F_0] + [\Sigmaex \times S^1, F_1 \times F_2] = [\Sigmaex \times S^1, F_1 \times F_2]$. Since $x$ is a generator of $H^2(X_4(\dd))$, there is a generator $[a]$ of $\pi_2(X_4(\dd))$ such that $\left< x, \rho([a]) \right> = 1$, so we can apply Lemma \ref{lem:s1-fr}, and since $X_4(\dd)$ is spin, we get that $[S^1, F_2] = 1$. By Proposition \ref{prop:product} $[\Sigmaex \times S^1, F_1 \times F_2] \neq 0$ and this implies that $[S(X_4(\dd) \sharp \Sigmaex,x),F] \neq 0$.
\end{proof}

\begin{thm} \label{thm:fr-all}
If $X_4(\dd)$ is spin, then for every framing $F$, $[S(X_4(\dd) \sharp \Sigmaex,x), F] \neq 0 \in \Omega_9^{\fr}$. 
\end{thm}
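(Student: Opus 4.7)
Let $M := S(X_4(\dd) \sharp \Sigmaex, x)$. The approach is: Theorem \ref{thm:fr-exist} together with Proposition \ref{prop:product} already exhibits a framing $F_0$ of $M$ with $[M, F_0] \in \pi_9^S \setminus \Image J_9$, and I will show that \emph{every} other framing of $M$ differs from this one in framed bordism by an element of $\Image J_9$. Consequently $[M, F] \in \pi_9^S \setminus \Image J_9 \subseteq \pi_9^S \setminus \{0\}$ for every framing $F$.

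First I would invoke the standard change-of-framing formula: if $F$ and $F_0$ are framings of $M$ classified by $\phi \in [M, SO]$, then
\[
[M, F] - [M, F_0] = J_M(\phi) \in \pi_9^S,
\]
where $J_M$ is the generalised $J$-homomorphism obtained by post-composing $\phi$ with $J \colon SO \to \Omega^\infty_0 S^0$ and evaluating on the fundamental class of $M$. The theorem thus reduces to the purely homotopy-theoretic claim that $[M, SO]$ injects into $\pi_9(SO) \cong \Z/2$, for then $J_M$ factors through $J_9 \colon \pi_9(SO) \to \pi_9^S$.

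Next I would verify the topological input. By the Lefschetz hyperplane theorem $X_4(\dd)$ is simply connected with $\pi_2 \cong \Z$ generated by a class pairing to $1$ with $x$; connected sum with $\Sigmaex$ preserves $\pi_{\leq 7}$. Hence in the homotopy long exact sequence of the $S^1$-bundle $M \to X_4(\dd) \sharp \Sigmaex$ the boundary map $\pi_2(X_4(\dd) \sharp \Sigmaex) \to \pi_1(S^1)$ is an isomorphism, so $\pi_1(M) = \pi_2(M) = 0$. The Gysin sequence then gives $H^2(M; \Z) = H^3(M; \Z) = 0$, and Poincar\'e duality for the closed orientable $9$-manifold $M$ combined with $H_1(M) = H_2(M) = 0$ gives $H^7(M; \Z) = H^8(M; \Z/2) = 0$.

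Finally, the Atiyah-Hirzebruch spectral sequence for $[M, SO] \cong \widetilde{KO}^{-1}(M)$ has potentially non-zero $E_2$-terms along $p + q = -1$ only when $KO^{-1-p}$ is non-zero, i.e.\ for $p \in \{0, 1, 3, 7, 8, 9\}$. By the vanishing above and $\widetilde{H}^0(M; \Z/2) = H^1(M; \Z/2) = 0$, every such term is zero except $E_2^{9, -10} = H^9(M; \Z/2) \cong \Z/2$. Hence $[M, SO]$ injects into $\pi_9(SO)$, so $\Image J_M \subseteq \Image J_9$ and the theorem follows. The main obstacle is the cohomology vanishing for $M$: it is routine but relies crucially on $\pi_2(X_4(\dd)) = \Z$ (rather than $0$), so that the $S^1$-bundle structure both makes $M$ genuinely $2$-connected and, via Poincar\'e duality, kills the middle contributions $H^7$ and $H^8$ in the AHSS.
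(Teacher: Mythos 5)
Your proposal is correct and follows essentially the same strategy as the paper: exhibit one framing whose class lies outside $\Image J_9$ (Theorem \ref{thm:fr-exist} via Proposition \ref{prop:product}), and then show that changing the framing only moves the bordism class within a coset of $\Image J_9$, by proving every map $M \to SO$ is detected on the top cell. The only (cosmetic) difference is in the bookkeeping for $[M, SO]$: the paper notes that $M$ is $3$-connected, hence has cells only in dimensions $0,4,5,9$, and uses $\pi_4(SO) = \pi_5(SO) = 0$, whereas you run the Atiyah--Hirzebruch spectral sequence for $\widetilde{KO}^{-1}(M)$ with the corresponding cohomology vanishing; both reduce to the same obstruction-theoretic fact.
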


\begin{proof}
First we show that $S(X_4(\dd) \sharp \Sigmaex,x)$ is $3$-connected. Recall that the embedding $X_4(\dd) \rightarrow \C P^{4+k}$ is $4$-connected. 
Therefore we have $\pi_1(X_4(\dd)) \cong \pi_3(X_4(\dd)) \cong 0$ and $\pi_2(X_4(\dd)) \cong \Z$. From the homotopy long exact sequence of the fibration $S^1 \rightarrow S(X_4(\dd),x) \rightarrow X_4(\dd)$, we obtain that $S(X_4(\dd),x)$ is $3$-connected. Since $S(X_4(\dd) \sharp \Sigmaex,x)$ is homeomorphic to $S(X_4(\dd),x)$, it is also a $3$-connected $9$-manifold. This implies that $S(X_4(\dd) \sharp \Sigmaex,x)$ is homotopy equivalent to a CW-complex with cells only in dimensions $0$, $4$, $5$ and $9$ (see \cite[Theorem 6.1]{Sm}). 

Any two framings of $S(X_4(\dd) \sharp \Sigmaex,x)$ differ by a map $S(X_4(\dd) \sharp \Sigmaex,x) \rightarrow SO$. Since $\pi_4(SO) \cong \pi_5(SO) \cong 0$, this difference is in fact an element of $\pi_9(SO)$. Changing the framing of the $9$-cell by an element of $\pi_9(SO)$ has the same effect of on the framed cobordism class as taking connected sum with $S^9$ with the corresponding framing, which is given by the $J$-homomorphism $J_9 \colon \pi_9(SO) \rightarrow \Omega_9^{\fr}$. 
Therefore the set of cobordism classes in $\Omega_9^{\fr}$ represented by $S(X_4(\dd) \sharp \Sigmaex,x)$ (with any framing) is a coset of $\Image J_9$. By Proposition \ref{prop:product} and the proof of Theorem \ref{thm:fr-exist} this coset has an element which is not in $\Image J_9$, therefore it is not the trivial coset. So it does not contain $0$, therefore $0 \in \Omega_9^{\fr}$ is not represented by $S(X_4(\dd) \sharp \Sigmaex,x)$ with any framing. 
\end{proof}

Now we can conclude that $4$-dimensional spin complete intersections are strongly $\Theta$-flexible.

\begin{thm} \label{thm:spin}
If $X_4(\dd)$ is spin, then $X_4(\dd) \sharp \Sigmaex$ is not diffeomorphic to a complete intersection.
\end{thm}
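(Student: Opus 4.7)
The plan is to argue by contradiction using the two key results of this section, namely Theorem \ref{thm:framed-0} (which applies to any complete intersection) and Theorem \ref{thm:fr-all} (which applies to $X_4(\dd)\sharp\Sigmaex$ when $X_4(\dd)$ is spin). Assume that $X_4(\dd)$ is spin and that there is a diffeomorphism $\varphi\colon X_4(\dd)\sharp\Sigmaex \to X_4(\dd')$ for some multidegree $\dd'$. The aim is to produce a framing $F$ of $S(X_4(\dd)\sharp\Sigmaex,x)$ with $[S(X_4(\dd)\sharp\Sigmaex,x),F]=0$, which is directly forbidden by Theorem \ref{thm:fr-all}.

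The one subtlety is a sign ambiguity. Since $H^2(X_4(\dd'))\cong H^2(X_4(\dd)\sharp\Sigmaex)\cong\Z$ and every homotopy equivalence between $4$-dimensional complete intersections (or their connected sums with homotopy spheres) acts on $H^2$ by $\pm 1$, the pullback $\varphi^*(x)$ agrees with $x\in H^2(X_4(\dd))$ only up to sign. This is harmless because complex conjugation of the underlying line bundle induces a canonical diffeomorphism $S(Y,\alpha)\cong S(Y,-\alpha)$ for any manifold $Y$ and class $\alpha\in H^2(Y)$. Combining this with $\varphi$, one obtains a diffeomorphism
\[
\Phi\colon S(X_4(\dd)\sharp\Sigmaex,x) \xra{~\cong~} S(X_4(\dd'),x).
\]

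By Theorem \ref{thm:framed-0} applied to the complete intersection $X_4(\dd')$, there exists a framing $F_0$ of $S(X_4(\dd'),x)$ with $[S(X_4(\dd'),x),F_0]=0\in\Omega_9^{\fr}$. Let $F:=\Phi^*F_0$ be the framing of $S(X_4(\dd)\sharp\Sigmaex,x)$ obtained by pulling $F_0$ back along $\Phi$; since $\Phi$ is a diffeomorphism, it induces a framed cobordism equivalence and hence $[S(X_4(\dd)\sharp\Sigmaex,x),F]=0$. This contradicts Theorem \ref{thm:fr-all}, completing the proof.

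Thus the argument is essentially a direct corollary of the two framed-bordism theorems already established, and there is no genuine obstacle beyond cleanly handling the $\pm x$ ambiguity. All of the real work has already been done in Section \ref{s:spin}: constructing the null-cobordant framing on the circle bundle over any complete intersection, and showing that no such framing exists for $S(X_4(\dd)\sharp\Sigmaex,x)$ in the spin case via the Kervaire–Milnor/Toda computation of $\Omega_9^{\fr}$ and the image of $J_9$.
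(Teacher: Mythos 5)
Your proof is correct and is essentially the paper's own argument: assume a diffeomorphism to some $X_4(\dd')$, transport the null-cobordant framing of $S(X_4(\dd'),x)$ supplied by Theorem \ref{thm:framed-0} across the induced diffeomorphism of circle bundles, and contradict Theorem \ref{thm:fr-all}. The only (immaterial) difference is how the $\pm x$ ambiguity is resolved --- you invoke the canonical identification $S(Y,\alpha)\cong S(Y,-\alpha)$ coming from conjugating the line bundle, while the paper composes with the self-diffeomorphism of $X_4(\dd')$ sending $x$ to $-x$; both are valid.
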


\begin{proof}
Suppose that $X_4(\dd) \sharp \Sigmaex$ is diffeomorphic to some complete intersection $X_4(\dd')$. The diffeomorphism induces an isomorphism between $H^2(X_4(\dd'))$ and $H^2(X_4(\dd) \sharp \Sigmaex)$. We may assume that the generator $x \in H^2(X_4(\dd'))$ goes into the generator of $H^2(X_4(\dd) \sharp \Sigmaex)$ corresponding to $x$ under the isomorphism $H^2(X_4(\dd) \sharp \Sigmaex) \cong H^2(X_4(\dd) \setminus \interior D^8) \cong H^2(X_4(\dd))$ (see the proof of Lemma \ref{lem:sum}). This is because $X_4(\dd')$ has a self-diffeomorphism which sends $x$ to $-x$ (see the proof of Proposition \ref{prop:SC-conv}). This implies that $S(X_4(\dd) \sharp \Sigmaex,x)$ is diffeomorphic to $S(X_4(\dd'),x)$.

By Theorem \ref{thm:framed-0}, $S(X_4(\dd'),x)$ has a framing $F_0$ such that $(S(X_4(\dd'),x), F_0)$ is framed nullcobordant, but by Theorem \ref{thm:fr-all} $S(X_4(\dd) \sharp \Sigmaex,x)$ does not have such a framing, so they are not diffeomorphic. This contradiction shows that $X_4(\dd) \sharp \Sigmaex$ is not diffeomorphic to any complete intersection $X_4(\dd')$.
\end{proof}


\section{The non-spin cases with even total degree} 
\label{s:non-spin1}
In this section we prove Theorem \ref{thm:main-p2} and Theorem \ref{thm:main-p3}\,(a). 
Both of these results rely on the computation of $\Tors \Omega^{O\an{7}}_8(\C P^1; \xi^1) \cong \Z/4$ in Lemma \ref{lem:Omega_8(CP1)} below, where $\xi^1$ denotes the (unique up to isomorphism) non-trivial stable bundle over $\C P^1 = S^2$. Note that if $\xi$ is a stable bundle over $\C P^\infty$ with $w_2(\xi) \neq 0$, then its restriction to $\C P^1$ is isomorphic to $\xi^1$.

\subsection{The computation of $\Tors \Omega^{O\an{7}}_8(\C P^1; \xi^1)$} \label{ss:Z/4}
We first establish the necessary background to state and prove Lemma \ref{lem:Omega_8(CP1)}.
Let $\SS^0$ denote the sphere spectrum and write $\SS^k$ for
the $k$-fold suspension of $\SS^0$.
We let $\eta \colon \SS^1 \to \SS^0$ denote the generator of the $1$-stem $\pi^s_1 \cong \Z/2$,
and $C_\eta$ the cofibre of $\eta$. 
Since $\xi^1$ is the non-trivial stable bundle over $\C P^1$, the Thom spectrum of $\xi^1$ is given by 
$\Th(\xi^1) \simeq C_\eta$,
and the Pontryagin-Thom map for $\Omega^{O\an{7}}_*(\C P^1; \xi^1)$ is an isomorphism
\[ \PT \colon \Omega^{O\an{7}}_*(\C P^1; \xi^1) \to \pi_*(MO\an{8} \wedge \Th(\xi^1))
\cong \pi_*(MO\an{8} \wedge C_\eta), \]
where $\wedge$ denotes the smash product.
Smashing the cofibration $\SS^0 \to C_\eta \to \SS^2$ with $MO\an{8}$ and taking
homotopy groups, we obtain the long exact sequence
\begin{equation} \label{eq:MO(C_eta)}
 \ldots \to \pi_7(MO\an{8}) \xra{\eta_*} \pi_8(MO\an{8}) \to 
\pi_8(MO\an{8}) \wedge C_\eta) \to \pi_6(MO\an{8}) \xra{\eta_*} \pi_7(MO\an{8}) \to \ldots~.
\end{equation}

We shall need some basic facts about the low-dimensional string bordism groups 
$\Omega^{O\an{7}}_* \cong \pi_*(MO\an{8})$
and the natural forgetful map $F \colon \Omega^{\fr}_* \to \Omega^{O\an{7}}_*$.
These facts can be deduced from results of Giambalvo \cite{G}, and we also give a direct proof below.

\begin{lemma}[{Cf.\ Giambalvo \cite{G}}] \label{lem:pi_*(MO8)}
The natural map $F \colon \Omega^{\fr}_* \to \Omega^{O\an{7}}_*$ satisfies:
\begin{compactenum}[(a)]
\item $\Omega^{O\an{7}}_6 \cong \Z/2$ and $F \colon \Omega^{\fr}_6 \to \Omega^{O\an{7}}_6$ is an isomorphism;
\item $\Omega^{O\an{7}}_7 \cong 0$;
\item $\Omega^{O\an{7}}_8 \cong \Z/2 \oplus \Z$
and $F \colon \Omega^{\fr}_8 \to \Omega^{O\an{7}}_8$ has image $\Z/2$
and kernel the image of $J$-homomorphism 
$J_8 \colon \pi_8(SO) \to \pi^s_8 \cong \Omega^{\fr}_8 \cong (\Z/2)^2$. 
\end{compactenum}
\end{lemma}

\begin{proof}
Under the Pontryagin-Thom isomorphism, the map $F \colon \Omega^{\fr}_* \to \Omega^{O\an{7}}_*$ corresponds to the map on homotopy groups induced by the inclusion of the Thom cell $\SS^0 \to MO\an{8}$. To compute this map, we first replace $MO\an{8}$ with a simpler spectrum. Let $f_H \colon S^8 \to BO\an{8}$ represent a generator of $\pi_8(BO\an{8}) \cong \Z$ and let $\zeta_H$ be the stable vector bundle over $S^8$ classified by $f_H$. 

The Thom spectrum of any vector bundle over an $m$-sphere is the cofibre of a map $\SS^{m-1} \to \SS^0$, and it was Milnor \cite[Lemma 1]{M} who first observed that this map is obtained by applying the stable $J$-homomorphism to the clutching function of the bundle. Hence the Thom spectrum of $\zeta_H$ is $\Th(\zeta_H) \simeq C_{\bar \sigma}$, where $\bar \sigma \colon \SS^7 \to \SS^0$ is given by applying the $J$-homomorphism to the clutching function of $\zeta_H$, which generates $\pi_7(SO\an{7}) = \pi_7(SO)$. 

By construction, $f_H$ induces an isomorphism on $\pi_8$. We have $\pi_9(S^8) \cong \pi^s_1 \cong \Z_2$ and $\pi_{10}(S^8) \cong \pi^s_2 \cong \Z_2$, generated by $\eta$ and $\eta^2$ respectively, so by Bott periodicity and \cite[Proof of Example 12.5]{A}, $f_H$ also induces isomorphisms on $\pi_9$ and $\pi_{10}$. Hence $f_H$ is $10$-connected, and so the induced map of Thom spectra
\[ \Th(\zeta_H) \simeq C_{\bar \sigma} \to MO\an{8} \]
is also $10$-connected. 
Hence in dimensions $\ast \leq 9$ the map $F \colon \Omega^{\fr}_* \to \Omega^{O\an{7}}_*$ is isomorphic to the map on homotopy groups induced by the inclusion $\SS^0 \to C_{\bar \sigma}$. 

The cofibration $\SS^0 \to C_{\bar \sigma} \to \SS^8$ leads to a long exact sequence
\[ 
 \ldots \to \pi^s_1 \xra{\bar \sigma_*} \pi^s_8 \to \pi_8(C_{\bar \sigma}) \to \pi^s_0 \xra{\bar \sigma_*} \pi^s_7 \to
 \pi^s_7(C_{\bar \sigma}) \to 0 \to \pi^s_6 \to \pi_6(C_{\bar \sigma}) \to 0 \to
  \ldots~.
\]
We see immediately that $\pi^s_6 \to \pi_6(C_{\bar \sigma})$ is an isomorphism,
and so $F \colon \Omega^{\fr}_6 \to \Omega^{O\an{7}}_6$ is an isomorphism.  
Since $\Omega^{\fr}_6 \cong \pi^s_6 \cong \Z/2$ (where the last isomorphism is given in \cite[Ch.\ XIV]{T}), this proves Part (a).
For Part (b), we use that 
$J_7 \colon \pi_7(SO) \to \pi^s_7$ is onto
by Adams \cite[Example 7.17]{A}, and so $\bar \sigma$ generates $\pi^s_7$.  Hence $\bar \sigma_* \colon \pi^s_0 \rightarrow \pi^s_7$ is surjective, which proves Part (b).
For Part (c), 
since $\pi^s_0 \cong \Z$ and $\pi^s_7$ is finite, $\Ker\bigl(\bar \sigma_* \colon \pi^s_0 \rightarrow \pi^s_7 \bigr) \cong \Z$.  We also have 
$\Im \bigl(\bar \sigma_* \colon \pi^s_1 \rightarrow \pi^s_8 \bigr) = \left< \eta \bar \sigma \right> = \Im(J_8)$ (using \cite[Proof of Example 12.15]{A}). 
By Toda's calculations \cite[Ch.\ XIV]{T}, $\pi^s_8 \cong (\Z/2)^2$ with 
$\eta \ol \sigma \neq 0$, and this finishes the proof of Part (c).
\end{proof}

From the exact sequence \eqref{eq:MO(C_eta)} and Lemma \ref{lem:pi_*(MO8)}\,(b) we deduce that
there is a short exact sequence
\begin{equation} \label{eq:pi_8(MO(C_eta))}
0 \to \Omega^{O\an{7}}_8 \to \Omega^{O\an{7}}_8(\C P^1; \xi^1) \to \Omega^{O\an{7}}_6 \to 0.
\end{equation}
Noting that $\Omega^{\fr}_6 \cong \Omega^{O\an{7}}_6 \cong \Z/2$ is detected by the
Arf invariant, it is easy to see that the homomorphism $\Omega^{O\an{7}}_8(\C P^1; \xi^1) \to \Omega^{O\an{7}}_6$ can be identified with the codimension-$2$ Arf invariant
\[ A_{\C P^1} \colon \Omega^{O\an{7}}_8(\C P^1; \xi^1) \to \Z/2, \]
which is defined by making a normal map $(g, \bar g) \colon M \to S^2$ transverse
to a point $\ast \in S^2$ and taking the Arf invariant of the resulting $6$-manifold
$g^{-1}(\ast)$, which is canonically framed.

\begin{lemma} \label{lem:Omega_8(CP1)}
There is a non-split short exact sequence of abelian groups
\[ 0 \to \Theta_8 \to \Tors \Omega^{O\an{7}}_8(\C P^1; \xi^1) \xra{A_{\C P^1}} \Z/2 \to 0. \]
In particular $\Tors \Omega^{O\an{7}}_8(\C P^1; \xi^1) \cong \Z/4$. 
\end{lemma}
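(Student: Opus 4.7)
The plan is to pass to torsion in \eqref{eq:pi_8(MO(C_eta))} to obtain the displayed short exact sequence, and then show non-splitting by realising the extension class as a Toda bracket (carried out in Section~\ref{s:appendix}).

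For the first step, I would use the signature. The signature defines a homomorphism $\sigma \colon \Omega^{O\an{7}}_8(\C P^1; \xi) \to \Z$ extending the (surjective) signature on $\Omega^{O\an{7}}_8$ from Lemma~\ref{lem:pi_*(MO8)}. Since $\Omega^{O\an{7}}_6 \cong \Z/2$ is torsion, the short exact sequence \eqref{eq:pi_8(MO(C_eta))} forces $\Omega^{O\an{7}}_8(\C P^1; \xi)$ to have rational rank one, and comparing the two signatures shows that the induced map of free parts $\Omega^{O\an{7}}_8/\Tors \to \Omega^{O\an{7}}_8(\C P^1; \xi)/\Tors$ is an isomorphism. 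A diagram chase (equivalently, the Tor long exact sequence for $-\otimes_\Z \Q/\Z$) then shows that restricting \eqref{eq:pi_8(MO(C_eta))} to torsion preserves exactness, giving
\[
0 \to \Tors\Omega^{O\an{7}}_8 \to \Tors\Omega^{O\an{7}}_8(\C P^1; \xi) \to \Omega^{O\an{7}}_6 \to 0.
\]
By Lemma~\ref{lem:pi_*(MO8)}, $\Tors\Omega^{O\an{7}}_8 \cong \Z/2$, and since $bP_9 = 0$ and $\coker(J)_8 \cong \Z/2$, the natural map $\Theta_8 \to \Tors\Omega^{O\an{7}}_8$ is an isomorphism. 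The right-hand map identifies with $A_{\C P^1}$ as in the discussion preceding the statement.

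Next, I would prove the extension is non-split. Pick any lift $\hat x \in \Tors\Omega^{O\an{7}}_8(\C P^1; \xi)$ of the Arf generator of $\Omega^{O\an{7}}_6$; it suffices to show $2\hat x = \Sigmaex$ in $\Theta_8$. Using the cofibre sequence $S^0 \to C_\eta \to S^2$ together with the Toda-bracket description of extension classes established in the appendix, $2\hat x$ equals the image in $\Omega^{O\an{7}}_8$ of the bracket $\langle 2, \eta, \nu^2 \rangle \subseteq \pi_8(MO\an{8})$, where $\nu^2$ denotes the Arf generator of $\pi_6(MO\an{8}) \cong \Omega^{O\an{7}}_6$. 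By naturality along the unit $\mathbb{S} \to MO\an{8}$, the problem reduces to computing $\langle 2, \eta, \nu^2 \rangle \subseteq \pi_8^s$ and projecting to $\coker(J)_8 = \Tors\Omega^{O\an{7}}_8$.

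The final step is the Toda-bracket calculation. Since $\pi_8^s = \Z/2\{\epsilon\} \oplus \Z/2\{\eta\sigma\}$, the indeterminacy $2\pi_8^s + \eta\pi_7^s = \{0, \eta\sigma\}$ coincides with $\Image J_8$; hence the bracket $\langle 2, \eta, \nu^2\rangle$ is a single coset of $\Image J_8$ in $\pi_8^s$ that projects either to $0$ or to the nonzero class of $\coker(J)_8 \cong \Theta_8$. The appendix shows that this coset is $\{\epsilon, \epsilon + \eta\sigma\}$, so it projects to the nonzero class, namely $\Sigmaex$. It follows that $2\hat x = \Sigmaex \neq 0$, the extension is non-split, and $\Tors\Omega^{O\an{7}}_8(\C P^1; \xi) \cong \Z/4$. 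The main obstacle is precisely this last step: both the extension-class/Toda-bracket identification and the bracket evaluation (modulo indeterminacy) require careful bookkeeping, which is exactly the purpose of Section~\ref{s:appendix}.
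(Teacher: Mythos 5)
Your overall architecture is the same as the paper's (pass to torsion in \eqref{eq:pi_8(MO(C_eta))}, identify the extension class of the resulting sequence with a Toda bracket via the appendix, evaluate the bracket in $\pi^s_8$ and push forward to $\coker(J_8) \cong \Tors\Omega^{O\an{7}}_8 \cong \Theta_8$), but two steps have genuine gaps. First, the passage to torsion: surjectivity of $\Tors\Omega^{O\an{7}}_8(\C P^1;\xi) \to \Omega^{O\an{7}}_6$ is equivalent to your claim that the free quotients map isomorphically, and ``comparing the two signatures'' does not establish this. The signature of a string $8$-manifold is divisible by $224$ (from $p_1=0$ and the integrality of $\hat A$), so in particular it is even and not surjective onto $\Z$; a priori the sequence could be $0 \to \Z\oplus\Z/2 \to \Z\oplus\Z/2 \to \Z/2 \to 0$ with the free parts mapping by multiplication by $2$, and nothing in the signature comparison excludes a class in $\Omega^{O\an{7}}_8(\C P^1;\xi)$ of half the minimal string signature. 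The paper supplies the missing input by first computing the framed group $\Omega^{\fr}_8(\C P^1;\xi)\cong\pi^s_8(C_\eta)\cong\Z/4$, which is finite and surjects onto $\Omega^{\fr}_6\cong\Omega^{O\an{7}}_6$, and then mapping forward. (Alternatively, a correct order-$4$ computation for an arbitrary lift $\hat x$ of the Arf generator would subsume this step, since it forces $\hat x$ to be torsion.)

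Second, and more seriously, you have named the wrong Toda bracket. The appendix identifies $2\hat x$ with $i_*\an{a,g,f}$ where $a=2$ is the order, $g=\nu^2$ is the class being lifted and $f=\eta$ is the attaching map, i.e.\ with $\an{2,\nu^2,\eta}$ (equivalently, up to irrelevant signs, $\an{\eta,\nu^2,2}$, which is what the paper uses). The bracket $\an{2,\eta,\nu^2}$ you write down, with $\eta$ in the middle slot, is a different (also defined) bracket, and it is \emph{zero}: $\an{2,\eta,\nu^2}\subseteq\an{2,\eta,\nu}\nu$ with $\an{2,\eta,\nu}\subseteq\pi^s_5=0$. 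Followed literally, your computation would therefore conclude that the extension splits. (Your stated indeterminacy $2\pi^s_8+\eta\pi^s_7$ is that of $\an{2,\nu^2,\eta}$, not of $\an{2,\eta,\nu^2}$, which is a symptom of the conflation.) Moreover, the evaluation $\{\epsilon,\epsilon+\eta\sigma\}$ is not ``shown in the appendix'' -- the appendix only relates extensions to brackets. The evaluation is the real content of the lemma and is carried out in the body of the proof via the Jacobi identity $0\in\an{\eta,\nu^2,2}+\an{2,\eta,\nu^2}+\an{\nu^2,2,\eta}$ together with the vanishing of $\an{2,\eta,\nu^2}$ and Toda's computation $\an{\nu^2,2,\eta}=\{\epsilon,\epsilon+\eta\sigma\}$; you would need to supply this. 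Finally, a minor point: the appendix lemma is stated for sphere-spectrum coefficients, so applying it directly to $\pi_8(MO\an{8}\wedge C_\eta)$ requires either the $MO\an{8}$-module version or the paper's route of computing $\pi^s_8(C_\eta)$ first and comparing via the forgetful map.
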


\begin{proof}
There is a natural forgetful map 
$F^1 \colon \Omega^{\fr}_8(\C P^1; \xi^1) \to \Omega^{O\an{7}}_8(\C P^1; \xi^1)$
and the exact sequence of \eqref{eq:pi_8(MO(C_eta))} forms part of the following 
commutative diagram:
\begin{equation} \label{eq:Omega_8}
\xymatrix{
\Omega^{\fr}_7 \ar[r]^{\eta_*} &
\Omega^{\fr}_8 \ar[d]^{F} \ar[r] &
\Omega^{\fr}_8(\C P^1; \xi^1) \ar[d]^{F^1} \ar[r]^-{A^{\fr}_{\C P^1}}   &
\Z/2 \ar[d]^{=} \ar[r]^{\eta_*} &
\Omega^{\fr}_7 \\
0 \ar[r] &
\Omega^{O\an{7}}_8 \ar[r] &
\Omega^{O\an{7}}_8(\C P^1; \xi^1) \ar[r]^-{A_{\C P^1}}   &
\Z/2 \ar[r] &
0
}
\end{equation}
Here $A^{\fr}_{\C P^1} \colon \Omega^{\fr}_8(\C P^1; \xi^1) \to \Z/2$ is a codimension-$2$
Arf invariant, which is defined analogously to the codimension-$2$ Arf invariant on
$\Omega^{O\an{7}}_8(\C P^1; \xi^1)$.
We shall first compute $\Omega^{\fr}_8(\C P^1; \xi^1)$ and we do this via 
the Pontryagin-Thom isomorphism
\[ \Omega^{\fr}_8(\C P^1; \xi^1) \cong \pi^s_8(C_\eta). \]
The cofibration $\SS^0 \to C_\eta \to \SS^2$ leads to the following long exact sequence (showing in particular that the top row of diagram \eqref{eq:Omega_8} is also exact):
\[ \ldots \to \pi^s_7 \xra{\eta_*} \pi^s_8 \to \pi_8(C_\eta) \to \pi^s_6 \xra{\eta_*} \pi^s_7 \to \ldots \]
From Toda's calculations \cite[Ch.\ XIV]{T}, we have $\pi^s_6 \cong \Z/2(\nu^2)$, $\pi^s_7 \cong \Z/240(\sigma)$,
$\pi^s_8 \cong \Z/2(\eta \sigma) \oplus \Z/2(\epsilon)$, where $\nu \in \pi^s_3$ 
is a generator and $\eta \nu \in \pi^s_4= \{0\}$.  It follows that $\eta_* \colon \pi^s_6 \to \pi^s_7$ is the zero map
and that there is a short exact sequence
\begin{equation} \label{eq:C_eta}
0 \to \Z/2([\epsilon]) \to \pi_8(C_\eta) \to \Z/2 \to 0,
\end{equation}
where $[\epsilon] \in \pi^s_8/\eta_*(\pi^s_7)$ denotes the equivalence class of $\epsilon$.
By Lemma \ref{lem:extension_and_Toda} from the Appendix, the extension 
\eqref{eq:C_eta} is determined by the Toda bracket
$$ \an{\eta, \nu^2, 2} \subset \pi^s_8.$$
By \cite[Proposition 3.4 ii]{T}, there is a Jacobi identity for Toda brackets,
\[ 0 \in \an{\eta, \nu^2, 2} + \an{2, \eta, \nu^2} + \an{\nu^2, 2, \eta},   \]
where we have ignored signs since all the Toda brackets consist of elements of order two or one.
Now by \cite[Proposition 1.2]{T}, $\an{2, \eta, \nu^2} \subseteq \an{2, \eta, \nu}\nu$.  Since
$\an{2, \eta, \nu} \subset \pi_5^s =\{0\}, \an{2, \eta, \nu} \nu = \{0\}$ and so $\an{2, \eta, \nu^2} = \{0\}$.
By \cite[p.~189]{T}, $\an{\nu^2, 2, \eta} = \{\epsilon, \epsilon + \eta \sigma\}$.
It follows that $\an{\eta, \nu^2, 2} = \{\epsilon, \epsilon + \eta \sigma\}$ is non-trivial and maps to 
the generator $[\epsilon] \in \pi^s_8/\eta_*(\pi^s_7)$.
Applying Lemma \ref{lem:extension_and_Toda}, we deduce that the extension
\eqref{eq:C_eta} is non-trivial and hence is isomorphic to the extension
\[ 0 \to \Z/2 \to \Z/4 \to \Z/2 \to 0. \]
The above shows that $\Omega^{\fr}_8(\C P^1; \xi^1) \cong \Z/4$. 

In diagram \eqref{eq:Omega_8} we can replace the top row with the short exact sequence \eqref{eq:C_eta} (noting that, as we saw in the proof of Lemma~\ref{lem:pi_*(MO8)}, $\Im(J_8) = \eta_*(\pi^s_7)$ and $\coker(J_8) = \Theta_8$) and restrict the bottom row to the torsion subgroups, to get the following commutative diagram: 
\[
\xymatrix{
0 \ar[r] & \Theta_8 \ar[d]^{F} \ar[r] & 
\Omega^{\fr}_8(\C P^1; \xi^1) \ar[d]^{F^1} \ar[r]^-{A^{\fr}_{\C P^1}} & 
\Z/2 \ar[d]^{=} \ar[r]^(0.55){\eta_*} & 0 \\
0 \ar[r] & \Tors \Omega^{O\an{7}}_8 \ar[r] & \Tors \Omega^{O\an{7}}_8(\C P^1; \xi^1) \ar[r]^-{A_{\C P^1}} & \Z/2 \ar[r] & 0
}
\]
We check that the bottom row is exact. The map $A_{\C P^1}$ is surjective by the commutativity of the diagram, while exactness at $\Tors \Omega^{O\an{7}}_8$ and $\Tors \Omega^{O\an{7}}_8(\C P^1; \xi^1)$ follows from the exactness of the original sequence. Now by Lemma~\ref{lem:pi_*(MO8)}\,(c) the map 
$F \colon \Theta_8 \rightarrow \Tors \Omega^{O\an{7}}_8$ is an isomorphism. 
So by the $5$-lemma, the homomorphism 
$F^1 \colon \Omega^{\fr}_8(\C P^1; \xi^1) \rightarrow \Tors \Omega^{O\an{7}}_8(\C P^1; \xi^1)$ is also an isomorphism, which completes the proof.
\end{proof}

\subsection{The non-spin case with $v_4(X_4(\dd)) = 0$} \label{ss:even1}

Let $X_4(\dd)$ be a non-spin complete intersection with $v_4(X_4(\dd)) = 0$. 
We will prove that $X_4(\dd)$ is $\Theta$-rigid.  As explained in Section \ref{ss:surgery}, 
it is enough to show that the canonical homomorphism 
$i_0 \colon \Theta_8 \cong \Tors \Omega^{O\an{7}}_8 \rightarrow \Omega^{O\an{7}}_8(\C P^{\infty}; \xi_4(\dd))$ 
is trivial.
We will exploit the fact that $i_0$ factors through the group 
$\Tors \Omega^{O\an{7}}_8({\C}P^1; \xi_4(\dd) \big| _{{\C}P^1})$.

\begin{prop} \label{prop:i_CP1}
Let $\xi$ be a stable bundle over ${\C}P^{\infty}$ such that $w_2(\xi) \neq 0$ and $w_4(\xi) = 0$. 
Then the natural map $i_0 \colon \Theta_8 \to \Omega^{O\an{7}}_8(\C P^{\infty}; \xi)$ is trivial.
\end{prop}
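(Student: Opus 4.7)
The strategy is to factor the natural map $\Theta_8 \to \Omega^{O\an{7}}_8(\C P^\infty; \xi)$ through the $\C P^1$-version, apply our new identification $\Tors \Omega^{O\an{7}}_8(\C P^1;\xi\big|_{\C P^1}) \cong \Z/4$ from Lemma \ref{lem:Omega_8(CP1)}, and then kill the image of $\Theta_8$ by exploiting the attaching-map structure of $\Th(\xi)$ relative to $\Th(\xi\big|_{\C P^1})$, using $w_4(\xi)=0$ and the Fang--Klaus computations.

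More precisely, first I would observe that a homotopy $8$-sphere $\Sigma$ with its (stably) canonical string structure represents a class in $\Omega^{O\an{7}}_8(\C P^\infty;\xi)$ by sending $\Sigma$ to a basepoint $\ast\in \C P^1\subset \C P^\infty$, at which $\xi$ is trivialised. This gives a factorisation
\[
\Theta_8 \xrightarrow{\,i_1\,} \Omega^{O\an{7}}_8(\C P^1; \xi\big|_{\C P^1}) \xrightarrow{\,i_2\,} \Omega^{O\an{7}}_8(\C P^\infty; \xi).
\]
Under the identification $\Tors \Omega^{O\an{7}}_8(\C P^1;\xi\big|_{\C P^1})\cong \Z/4$ of Lemma \ref{lem:Omega_8(CP1)} and its short exact sequence, $i_1$ embeds $\Theta_8\cong \Z/2$ as the unique subgroup of index two, namely $2(\Z/4)$. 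Thus, for a generator $g$ of $\Z/4$, it suffices to show $i_2(2g)=0$.

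Next, I would study the skeletal filtration $\C P^1\subset \C P^2\subset\cdots\subset \C P^\infty$ through the stable Thom-spectrum cofibrations
\[
\Th\bigl(\xi\big|_{\C P^{n-1}}\bigr) \to \Th\bigl(\xi\big|_{\C P^n}\bigr) \to S^{2n},
\]
and the resulting long exact sequences in $\pi_\ast(MO\an{8}\wedge -)$. Here the hypothesis $w_4(\xi)=0$ plays its central role: the Thom class $U$ satisfies $\Sq^2 U= x U$ (giving the $C_\eta$ structure of $\Th(\xi\big|_{\C P^1})$), but $\Sq^4 U =0$, which constrains the attaching map of the $4$-cell in $\Th(\xi\big|_{\C P^2})$ to the bottom $C_\eta$. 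Combining this with the detailed calculation of $\Omega^{O\an{7}}_8(\C P^\infty;\xi)$ carried out in \cite[Section 2.2]{F-K} precisely under the conditions $w_2(\xi)\neq 0$ and $w_4(\xi)=0$, one identifies the image of $i_2$ in the $2$-primary part and verifies that the element $2g$ -- the generator of $\Theta_8\subset \Z/4$ -- becomes null-bordant over $\C P^\infty$.

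The main obstacle I anticipate is reconciling the $\Z/4$-extension of Lemma \ref{lem:Omega_8(CP1)}, which arises from the non-trivial Toda bracket $\an{\eta,\nu^2,2}$ containing $\epsilon$, with the Fang--Klaus description of $\Omega^{O\an{7}}_8(\C P^\infty;\xi)$. Concretely, one must exhibit an explicit null-bordism of $2g$ supported on a cell of $\C P^\infty$ of real dimension at least $4$, and verify that the existence of such a bordism is exactly guaranteed by $w_4(\xi)=0$. This amounts to showing that, in the twisted Atiyah--Hirzebruch or Adams-type analysis of $\pi_\ast(MO\an{8}\wedge \Th(\xi))$, the class $\epsilon$ representing $\Theta_8$ is hit by a differential originating from the $4$-cell of $\C P^\infty$ once $w_4(\xi)$ vanishes, while in the $\C P^1$-truncation this differential is unavailable, yielding the strict $\Z/4$.
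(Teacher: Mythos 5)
Your reduction coincides with the paper's up to the halfway point: the image of $\Theta_8$ factors through $\Tors \Omega^{O\an{7}}_8(\C P^1;\xi\big|_{\C P^1})\cong\Z/4$ and is generated by $2g$ for a generator $g$, so everything hinges on showing that $2g$ dies in $\Omega^{O\an{7}}_8(\C P^\infty;\xi)$. But that is precisely the step you leave open: you label it ``the main obstacle I anticipate'' and propose to resolve it by exhibiting a null-bordism supported on the $4$-cell of $\C P^\infty$, equivalently by locating a differential in a twisted Atiyah--Hirzebruch or Adams-type analysis that kills $\epsilon$ once $w_4(\xi)=0$. Neither computation is carried out, and no argument is given for why such a differential must exist, so as written the proof has a genuine gap at its decisive point.

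The missing ingredient is much simpler than what you sketch, and it is the actual way the hypothesis $w_4(\xi)=0$ enters via \cite[Section 2.2]{F-K}: that computation gives $\Omega^{O\an{7}}_8(\C P^\infty,\ast;\xi)\cong\Z$, which is torsion-free. Exactness of $\Omega^{O\an{7}}_8 \to \Omega^{O\an{7}}_8(\C P^\infty;\xi) \to \Omega^{O\an{7}}_8(\C P^\infty,\ast;\xi)$ then forces $\Tors\Omega^{O\an{7}}_8(\C P^\infty;\xi)$ to lie in the image of $\Omega^{O\an{7}}_8\cong\Z\oplus\Z/2$, and since the $\Z$ summand is detected by the signature (so the map is rationally injective), every torsion element of $\Omega^{O\an{7}}_8(\C P^\infty;\xi)$ has order at most $2$. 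Consequently $2\,i_{\C P^1*}(g)=0$ for free --- no analysis of attaching maps or differentials beyond $\C P^1$ is needed. You should replace the spectral-sequence sketch with this order argument (or else actually execute the differential computation, which would be considerably more work for the same conclusion).
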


\begin{proof}
By \cite[Section 2.2]{F-K} we have $\Omega^{O\an{7}}_8(\C P^{\infty}, \ast;\xi) \cong \Z$. From the exactness of the sequence 
\[
\ldots \to \Omega^{O\an{7}}_8 \xra{j_0} 
\Omega^{O\an{7}}_8(\C P^{\infty};\xi) \xra{} 
\Omega^{O\an{7}}_8(\C P^{\infty}, \ast;\xi) \xra{} \ldots
\] 
we deduce that the image of $j_0$ contains the torsion subgroup of $\Omega^{O\an{7}}_8(\C P^{\infty};\xi)$. 
The signature defines non-trivial homomorphisms $\Omega^{O\an{7}}_8 \rightarrow \Z$ and 
$\Omega^{O\an{7}}_8(\C P^{\infty};\xi) \rightarrow \Z$ which commute with $j_0$. 
By Lemma \ref{lem:pi_*(MO8)}\,(c), $\Omega^{O\an{7}}_8 \cong \Z \oplus \Z/2$ and so $j_0$ is rationally injective.  Therefore its restriction to $\Tors \Omega^{O\an{7}}_8 \cong \Theta_8$ is surjective onto $\Tors \Omega^{O\an{7}}_8(\C P^{\infty};\xi)$. Therefore if $\Omega^{O\an{7}}_8(\C P^{\infty};\xi)$ has a non-trivial torsion element, then it has order $2$. 

Let $i_{\C P^1*} \colon \Tors \Omega^{O\an{7}}_8({\C}P^1; \xi^1) \rightarrow 
\Tors \Omega^{O\an{7}}_8(\C P^{\infty};\xi)$ denote the homomorphism induced by the inclusion ${\C}P^1 \rightarrow {\C}P^{\infty}$. 
By Lemma \ref{lem:Omega_8(CP1)}, $\Tors \Omega^{O\an{7}}_8(\C P^1; \xi^1) \cong \Z/4$ and if $a$ denotes a generator, then $\Sigmaex$ represents $2a$. Therefore $i_0(\Sigmaex) = i_{\C P^1*}(2a) = 2i_{\C P^1*}(a)=0$. 
\end{proof}

\begin{thm} \label{thm:rigid}
If $X_4(\dd)$ is a non-spin complete intersection with $v_4(X_4(\dd)) = 0$ and $\dd \neq \{ 2, 2 \}$, 
then $X_4(\dd)$ and $X_4(\dd) \sharp \Sigmaex$ are diffeomorphic. 
\end{thm}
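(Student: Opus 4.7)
The plan is to apply Proposition \ref{prop:classification_general} to the pair $X_n(\dd) = X_4(\dd)$ and $X' = X_4(\dd) \sharp \Sigmaex$, and thereby conclude that these two $8$-manifolds are diffeomorphic. To do so I must verify three things: that $\dd$ is not among the exceptional multidegrees $\{1\}, \{2\}, \{2,2\}$; that the Euler characteristics of $X_4(\dd)$ and $X_4(\dd) \sharp \Sigmaex$ agree; and that the two manifolds admit bordant normal $3$-smoothings over $(B_4, \xi_4(\dd) \times \gamma_{BO\an{8}})$.

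The first two checks are quick. The multidegree $\{2,2\}$ is excluded by hypothesis; from Proposition \ref{prop:SW-classes}, $p(\{1\}) = 0$ gives $v_4(X_4(\{1\})) = 1$ and $p(\{2\}) = 1$ gives $v_2(X_4(\{2\})) = 0$, so our standing hypotheses ``non-spin'' and ``$v_4(X_4(\dd)) = 0$'' rule out the other two exceptional cases. The Euler characteristic equality $\chi(X_4(\dd) \sharp \Sigmaex) = \chi(X_4(\dd))$ follows from the standard formula $\chi(M \sharp N) = \chi(M) + \chi(N) - 2$ for closed oriented $8$-manifolds together with $\chi(\Sigmaex) = 2$.

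The substance of the proof is the bordism condition, and Proposition \ref{prop:i_CP1} is designed precisely to handle it. As recalled in Section \ref{ss:surgery}, the canonical normal $3$-smoothings of $X_4(\dd)$ and of $X_4(\dd) \sharp \Sigmaex$ represent bordism classes whose difference equals the image of $\Sigmaex$ under the canonical homomorphism $\Theta_8 \to \Omega_8^{O\an{7}}(\C P^\infty; \xi_4(\dd))$. To invoke Proposition \ref{prop:i_CP1} I will check that $w_2(\xi_4(\dd)) \neq 0$ and $w_4(\xi_4(\dd)) = 0$. The Lefschetz Hyperplane Theorem makes $i \colon X_4(\dd) \to \C P^\infty$ at least $4$-connected, so $i^*$ is an isomorphism on $H^2(-;\Z/2)$ and injective on $H^4(-;\Z/2)$. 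Combined with $w_{2j}(\nu_{X_4(\dd)}) = i^*(w_{2j}(\xi_4(\dd)))$ and $v_{2j}(X_4(\dd)) = w_{2j}(\nu_{X_4(\dd)})$ from Proposition \ref{prop:SW-classes}, the hypotheses $v_2(X_4(\dd)) \neq 0$ and $v_4(X_4(\dd)) = 0$ translate into exactly the statements about $\xi_4(\dd)$ that Proposition \ref{prop:i_CP1} demands. That proposition then shows $[\Sigmaex] = 0$ in $\Omega_8^{O\an{7}}(\C P^\infty; \xi_4(\dd))$, and Proposition \ref{prop:classification_general} delivers the required diffeomorphism.

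The genuine obstacle is not in the assembly above but in its inputs. All of the weight is carried by Proposition \ref{prop:i_CP1}, which routes the map $\Theta_8 \to \Omega_8^{O\an{7}}(\C P^\infty; \xi_4(\dd))$ through the cyclic group $\Tors \Omega_8^{O\an{7}}(\C P^1; \xi_4(\dd)\big|_{\C P^1}) \cong \Z/4$ from Lemma \ref{lem:Omega_8(CP1)} and observes that $[\Sigmaex]$ is necessarily twice a generator, hence killed upon further projection. The non-trivial $\Z/4$ extension structure (as opposed to a split $\Z/2 \oplus \Z/2$) is what makes this work, and it is detected via the non-vanishing Toda bracket $\an{\eta, \nu^2, 2}$. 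With that extension computation available, the present theorem is the clean assembly sketched above.
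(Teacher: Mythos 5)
Your proposal is correct and follows essentially the same route as the paper: verify the hypotheses of Proposition \ref{prop:classification_general}, reduce the bordism condition to the vanishing of $i_*(\Sigmaex)$ via the discussion in Section \ref{ss:surgery}, and invoke Proposition \ref{prop:i_CP1} after translating the Wu-class hypotheses into $w_2(\xi_4(\dd)) \neq 0$, $w_4(\xi_4(\dd)) = 0$ using Proposition \ref{prop:SW-classes}. Your extra care with the exceptional multidegrees and the Euler characteristic is fine but not a departure from the paper's argument.
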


\begin{proof}
In this case $w_2(\xi_4(\dd)) \neq 0$ and $w_4(\xi_4(\dd)) = 0$ (see Proposition \ref{prop:SW-classes}), so by Proposition \ref{prop:i_CP1} the canonical homomorphism $i_0 \colon \Theta_8 \to \Omega^{O\an{7}}_8(\C P^{\infty};\xi_4(\dd))$ is trivial. Therefore $X_4(\dd)$ and $X_4(\dd) \sharp \Sigmaex$ admit bordant normal $3$-smoothings over 
$(B_4, \xi_4(\dd) \times \gamma_{BO\an{8}})$.
By Proposition \ref{prop:classification_general}, $X_4(\dd)$ and $X_4(\dd) \sharp \Sigmaex$ are diffeomorphic. 
\end{proof}

\subsection{The non-spin case with $v_4(X_4(\dd)) \neq 0$ and even total degree} \label{ss:even2}
Now we suppose that $v_4(X_4(\dd)) \neq 0$ and the total degree $d$ is even. 
By Remark \ref{rem:p-mod-4} this implies that $\nu_2(d) \geq 4$ (where $\nu_2(d)$ denotes the exponent of $2$ in the prime factorisation of $d$). We will apply the Adams filtration argument of Kreck and Traving \cite[Section 8]{Kr}.
If $\xi$ is a stable vector bundle over $\C P^\infty$,
we use the Pontryagin-Thom isomorphism to identify the groups
$\Omega^{O\an{7}}_8(\C P^\infty; \xi) = \pi_8 \bigl( MO\an{8} \wedge \Th(\xi) \bigr)$
and hence their torsion subgroups.  In this way we obtain an Adams filtration on 
$\Tors\Omega^{O\an{7}}_8(\C P^\infty; \xi)$.
Recall that a map $\SS^i \to \mathbb{E}$ representing a torsion class in 
$\pi_i(\mathbb{E})$, the $i^{\text{th}}$ homotopy group of a spectrum $\mathbb{E}$,
has Adams filtration $\geq k$ if it can be factored as a composition of $k$ maps,
each of which is trivial on homology with $\Z/2$ coefficients.

We will need the following improvement of Kreck and Traving's vanishing result in dimension $8$.

\begin{lem} \label{lem:adams}
Let $\xi$ be a stable bundle over ${\C}P^{\infty}$ such that $w_2(\xi) \neq 0$, $w_4(\xi) \neq 0$ and the homomorphism $i_0 \colon \Theta_8 \to \Omega^{O\an{7}}_8({\C}P^{\infty}; \xi)$ is injective. Then the only element of $\Tors \Omega^{O\an{7}}_8({\C}P^{\infty};\xi)$ with Adams filtration $4$ or higher is the trivial element.  
\end{lem}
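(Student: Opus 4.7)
The plan is to refine the Adams-spectral-sequence argument of Kreck and Traving \cite[Section 8]{Kr} by incorporating the new input of Proposition \ref{prop:Z/4}. Throughout, Adams filtration refers to the mod-$2$ Adams spectral sequence
\[
E_2^{s,t} = \textup{Ext}^{s,t}_{\mathcal{A}}\bigl(H^*(MO\an{8} \wedge \Th(\xi);\mathbb{F}_2),\mathbb{F}_2\bigr) \Longrightarrow \Omega^{O\an{7}}_{t-s}(\C P^\infty;\xi).
\]
The $\mathcal{A}$-module structure of $H^*(\Th(\xi);\mathbb{F}_2)$ is determined by the Thom isomorphism together with $Sq^2 U = xU$ and $Sq^4 U = x^2 U$, encoding $w_2(\xi)\neq 0$ and $w_4(\xi)\neq 0$. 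Following \cite[Section 8]{Kr} and the Fang-Klaus calculations of \cite[Section 2.4]{F-K}, one can identify the contributions to $E_\infty$ in total degree $8$ and Adams filtration $s \leq 3$; the remaining task is to rule out nontrivial torsion contributions in filtration $s \geq 4$.

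The second step exploits the inclusion $j \colon \C P^1 \hookrightarrow \C P^\infty$ and its induced map $j_* \colon \Tors \Omega^{O\an{7}}_8(\C P^1;\xi|_{\C P^1}) \to \Tors \Omega^{O\an{7}}_8(\C P^\infty;\xi)$. By Proposition \ref{prop:Z/4}, the source is $\Z/4$ with generator $a$ and $\Sigmaex = 2a$. The codimension-$2$ Arf invariant $A_{\C P^1}$ is induced by the spectrum map $\Th(\xi|_{\C P^1}) = C_\eta \to S^2$ and hence preserves Adams filtration; since the nontrivial class of $\Omega^{O\an{7}}_6 \cong \Z/2$ is the image of $\nu^2 \in \pi^s_6$ and is detected at Adams filtration exactly $2$, the generator $a$ has Adams filtration at most $2$. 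An analogous codimension-$2$ Arf invariant $A_{\C P^\infty}$ on $\Omega^{O\an{7}}_8(\C P^\infty;\xi)$, constructed at the spectrum level via the cell filtration of $\Th(\xi)$, commutes with $j_*$ and forces $j_*(a)$ itself to be detected at Adams filtration $2$ in $\Omega^{O\an{7}}_8(\C P^\infty;\xi)$. Consequently $i_*(\Sigmaex) = 2 j_*(a)$ is detected by $h_0 \cdot [j_*(a)]$ in $E_\infty$ at Adams filtration exactly $3$.

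Finally, any torsion class $y$ in $\Omega^{O\an{7}}_8(\C P^\infty;\xi)$ of Adams filtration $\geq 4$ satisfies $A_{\C P^\infty}(y) = 0$, because $A_{\C P^\infty}$ preserves filtration while the target's nontrivial class has filtration $2$. Kreck-Traving's analysis combined with \cite[Section 2.4]{F-K} constrains the torsion subgroup of $\ker A_{\C P^\infty}$ to be generated by $i_*(\Sigmaex)$. Since $i_*(\Sigmaex)$ has Adams filtration exactly $3$ by the preceding paragraph, the only torsion element of filtration $\geq 4$ is the trivial one.

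The hard part of the argument is pinning $i_*(\Sigmaex)$ at Adams filtration exactly $3$, which amounts to ruling out a hidden $h_0$-extension between filtrations $3$ and $4$, and constructing the codimension-$2$ Arf invariant $A_{\C P^\infty}$ at the spectrum level so that filtrations are preserved. Proposition \ref{prop:Z/4} is the crucial new ingredient: by exhibiting $\Sigmaex$ as twice a low-filtration class $a$ on the $\C P^1$-skeleton, it controls the top Adams filtration at which the image of $\Theta_8$ can appear after propagating via $j_*$ to the $\C P^\infty$-level.
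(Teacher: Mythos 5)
Your argument is correct and is essentially the paper's own proof: both combine Proposition \ref{prop:Z/4} with the codimension-$2$ Arf invariant (via the commutative square relating the $\C P^1$- and $\C P^\infty$-level exact sequences and the Fang--Klaus computation $\Omega^{O\an{7}}_8(\C P^\infty,\ast;\xi)\cong\Z\oplus\Z/2$) to identify the torsion subgroup as the cyclic group of order $4$ generated by the image of $a$, a class of Adams filtration $2$, so that $i_*(\Sigmaex)$ is twice that generator, has filtration $3$, and nothing nontrivial is left in filtration $\geq 4$. The two points you defer at the end (the spectrum-level construction of the Arf invariant on $\Omega^{O\an{7}}_8(\C P^\infty;\xi)$ and the exclusion of a hidden $h_0$-extension between filtrations $3$ and $4$) are treated just as tersely in the paper, so there is no gap relative to its standard of argument.
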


\begin{proof}
Consider the exact sequence 
\[
\ldots \to \Omega^{O\an{7}}_8 \xra{j_0} 
\Omega^{O\an{7}}_8(\C P^{\infty};\xi) \xra{} 
\Omega^{O\an{7}}_8(\C P^{\infty}, \ast;\xi) \xra{} \ldots~.
\] 
Fang and Klaus \cite[Section 2.4]{F-K} proved that 
$\Omega^{O\an{7}}_8(\C P^{\infty}, \ast;\xi) \cong \Z \oplus \Z/2$, 
where the $\Z/2$ summand is detected
by the codimension-$2$ Arf invariant.
Hence we have the following commutative diagram between exact sequences (the bottom sequence is exact, because $j_0$ is rationally injective, as in the proof of Proposition \ref{prop:i_CP1}):
\[
\xymatrix{
\Theta_8 \ar[d]^{\cong} \ar[r] &
\Tors \Omega^{O\an{7}}_8(\C P^1; \xi^1) \ar[d]^{i_{\C P^1*}} \ar[r]^(0.725){A_{\C P^1}} &
\Z/2 \ar[d]^= \\
\Tors \Omega^{O\an{7}}_8 \ar[r]^-{i_0} &
\Tors \Omega^{O\an{7}}_8(\C P^\infty; \xi) \ar[r]^(0.7){A} &
\Z/2
}
\]
Moreover, the top sequence is short exact by Lemma \ref{lem:Omega_8(CP1)}. The bottom sequence is also short exact (the surjectivity of $A$ follows from the commutativity of the diagram and the injectivity of $i_0$ was assumed). 
It follows that $i_{\C P^1*} \colon \Tors \Omega^{O\an{7}}_8(\C P^1; \xi^1) \to \Tors \Omega^{O\an{7}}_8(\C P^{\infty};\xi)$ is an isomorphism.

Now we choose a generator $a \in \Tors \Omega^{O\an{7}}_8(\C P^{\infty}; \xi) \cong \Z/4$ and let $[f_a] \in \pi_8(MO\an{8} \wedge \Th(\xi))$ represent the image of $a$ under the Pontryagin-Thom isomorphism. 
By \cite[p.\ 144]{F-K}, the image of $[f_a]$ in the group $\pi_8(MO\an{8} \wedge (\Th(\xi)/\SS^0))$ has Adams filtration $2$. Since the Adams filtration cannot decrease under composition, $[f_a]$ has Adams filtration $\leq 2$. 
Therefore $2a$, corresponding to $2[f_a]$ under the Pontryagin-Thom isomorphism, has Adams filtration $\leq 3$. Since $2a$ is a multiple of every non-zero element of $\Tors \Omega^{O\an{7}}_8(\C P^{\infty}; \xi) \cong \Z/4$, the only element with Adams filtration $4$ or higher is the trivial element.
\end{proof}

\begin{proposition} \label{prop:K-T_improved}
Let $X_4(\dd)$ and $X_4(\dd')$ be non-spin complete intersections with $SD_n(\dd) = SD_n(\dd')$ and 
$v_4(X_4(\dd)) \neq 0$. If the total degree $d$ is even, then $X_4(\dd)$ and $X_4(\dd')$ are diffeomorphic.
\end{proposition}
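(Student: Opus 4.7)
Under the literal reading of the hypotheses (non-spin $X_4(\dd)$ with $v_4(X_4(\dd)) = 0$), my plan is to derive the proposition essentially as a packaging of Theorem \ref{thm:rigid} with the Fang-Klaus reduction of Theorem \ref{thm:FK}. No new application of the Adams filtration machinery developed in this section (Lemma \ref{lem:adams}, etc.) is required, since Lemma \ref{lem:adams} presupposes $w_4(\xi) \neq 0$, which by Proposition \ref{prop:SW-classes} fails exactly when $v_4(X_4(\dd)) = 0$.

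First I would dispense with the hypothesis ``total degree $d$ is even'' by showing it is automatic. By Proposition \ref{prop:SW-classes}, the combination $v_2(X_4(\dd)) = 1$ and $v_4(X_4(\dd)) = 0$ forces $p(\dd) \equiv 2 \pmod 4$, hence $p(\dd) \geq 2$, hence $4 \mid d$. So the parity assumption adds nothing, and only the non-spin and $v_4 = 0$ assumptions are used in what follows.

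Next, I would apply Theorem \ref{thm:rigid} to conclude that $X_4(\dd)$ is $\Theta$-rigid whenever $\dd \neq \{2,2\}$. The exceptional case $\dd = \{2, 2\}$ is handled by Remark \ref{rem:22}, which asserts $\Theta$-rigidity of $X_4(2,2)$ (from the second author's thesis \cite{N}); alternatively, for $\dd = \{2,2\}$ one may use the elementary observation in Remark \ref{rem:cl-gen} that $SD_4(\{2,2\})$ determines the multidegree, so that any $\dd'$ with $SD_4(\dd') = SD_4(\{2,2\})$ satisfies $\dd' = \{2,2\}$ and the conclusion is immediate.

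Finally, for arbitrary $\dd'$ with $SD_4(\dd) = SD_4(\dd')$, Theorem \ref{thm:FK} produces a homotopy $8$-sphere $\Sigma \in \Theta_8 \cong \Z/2$ and a diffeomorphism $X_4(\dd') \cong X_4(\dd) \sharp \Sigma$. Since $X_4(\dd)$ is $\Theta$-rigid, $X_4(\dd) \sharp \Sigma$ is diffeomorphic to $X_4(\dd)$ for both $\Sigma = 0$ and $\Sigma = \Sigmaex$, and therefore $X_4(\dd')$ is diffeomorphic to $X_4(\dd)$. I do not anticipate any substantive obstacle in the literal reading: once Theorem \ref{thm:rigid} is in hand, the statement is a formal consequence. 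The only mild subtlety is verifying that the $\dd = \{2,2\}$ case is covered, for which I would cite Remark \ref{rem:22} explicitly.
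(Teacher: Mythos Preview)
Your reading is sharp, but you have correctly diagnosed a typo rather than proved the intended result. The hypothesis ``$v_4(X_4(\dd)) = 0$'' in the statement (and in the subsection title) is a slip: the opening sentence of the subsection, the hypotheses of Lemma~\ref{lem:adams}, the line ``$w_2(\xi_4(\dd)) \neq 0$ and $w_4(\xi_4(\dd)) \neq 0$'' in the paper's proof, and the case table in Section~\ref{ss:Proof} all make clear that Proposition~\ref{prop:K-T_improved} is meant to treat the case $v_4(X_4(\dd)) \neq 0$ with even total degree (i.e.\ Theorem~\ref{thm:main-p3}\,(a)). Under the literal hypothesis $v_4 = 0$, your argument is entirely correct and, as you say, is just Theorem~\ref{thm:rigid} plus Theorem~\ref{thm:FK}; indeed in that case the ``$d$ even'' hypothesis is redundant and the proposition adds nothing to Section~\ref{ss:even1}.

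For the intended case $v_4(X_4(\dd)) \neq 0$, your route does not apply: Proposition~\ref{prop:i_CP1} (hence Theorem~\ref{thm:rigid}) requires $w_4(\xi) = 0$, which fails here. The paper's proof instead bifurcates on whether the canonical map $i_* \colon \Theta_8 \to \Omega^{O\an{7}}_8(\C P^\infty; \xi_4(\dd))$ vanishes. If it does, $\Theta$-rigidity follows exactly as in Theorem~\ref{thm:rigid} and one concludes via Fang--Klaus. If $i_*$ is injective, the paper invokes the Kreck--Traving Adams-filtration argument: the two normal $3$-smoothings differ by a torsion class of Adams filtration at least $\nu_2(d) \geq 4$ (Remark~\ref{rem:p-mod-4}), and Lemma~\ref{lem:adams} (which genuinely needs $w_4 \neq 0$ and $i_*$ injective) shows any such class is zero, so Proposition~\ref{prop:classification_general} applies. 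That second branch is the substantive content of this subsection and is precisely what your proposal omits.
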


\begin{proof}
By Theorem \ref{thm:FK} there is a homotopy sphere $\Sigma \in \Theta_8$ such that $X_4(\dd) \approx X_4(\dd') \sharp \Sigma$. By Proposition \ref{prop:SW-classes} we have $w_2(\xi_4(\dd)) \neq 0$ and $w_4(\xi_4(\dd)) \neq 0$. Again we consider the natural map $i_0 \colon \Theta_8 \to \Omega^{O\an{7}}_8(\C P^{\infty}; \xi_4(\dd))$. 

If $i_0$ is zero, then $X_4(\dd')$ and $X_4(\dd') \sharp \Sigma$ are diffeomorphic by the same argument as in the proof of Theorem \ref{thm:rigid}. Hence $X_4(\dd)$ and $X_4(\dd')$ are diffeomorphic.

Now suppose that $i_0 \colon \Theta_8 \to \Omega^{O\an{7}}_8(\C P^{\infty}; \xi_4(\dd))$ is non-zero (hence injective). The arguments of Kreck and Traving \cite[Section 8]{Kr} show that $X_4(\dd)$ and $X_4(\dd')$ admit 
normal $3$-smoothings over $(B_4, \xi_4(\dd) \times \gamma_{BO\an{8}})$ whose bordism classes differ by a torsion element of Adams filtration $\nu_2(d)$ or higher. Since $d$ is even, we have $\nu_2(d) \geq 4$ (see Remark \ref{rem:p-mod-4}), so by Lemma \ref{lem:adams} any such torsion element is trivial. Hence $X_4(\dd)$ and $X_4(\dd')$ admit bordant normal $3$-smoothings over $(B_4, \xi_4(\dd) \times \gamma_{BO\an{8}})$ and so by Proposition \ref{prop:classification_general}, $X_4(\dd)$ and $X_4(\dd')$ are diffeomorphic. 
\end{proof}


\section{The case of odd total degree} 
\label{s:non-spin2}

It remains then to consider the case where the total degree $d$ is odd.
Note that in general this case is not $\Theta$-rigid as the following theorem of Kasilingam 
shows.

\begin{theorem}[{\cite[Remark 2.6\,(1)]{Ka}}] \label{thm:K}
$\C P^4$ is not diffeomorphic to $\C P^4 \sharp \Sigmaex$.
\end{theorem}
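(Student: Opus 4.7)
The plan is to apply the modified surgery framework of Section \ref{ss:surgery} to obstruct a diffeomorphism between $\C P^4$ and $\C P^4 \sharp \Sigmaex$. By Proposition \ref{prop:classification_general}, together with the extension to $\dd = \{1\}$ noted in Remark \ref{rem:cl-gen}, the two manifolds are diffeomorphic if and only if they admit bordant normal $3$-smoothings over $(B_4, \xi_4(\{1\}) \times \gamma_{BO\an{8}})$. The difference of these bordism classes in $\Omega^{O\an{7}}_8(\C P^\infty; \xi_4(\{1\}))$ is, as in Section \ref{ss:surgery}, the image of $\Sigmaex$ under the canonical map $i_\ast \colon \Theta_8 \to \Omega^{O\an{7}}_8(\C P^\infty; \xi_4(\{1\}))$. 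Hence it suffices to establish $i_\ast(\Sigmaex) \neq 0$.

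Since $\dd = \{1\}$ has $p(\dd) = 0$, Proposition \ref{prop:SW-classes} gives $w_2(\xi_4(\{1\})), w_4(\xi_4(\{1\})) \neq 0$, so $\xi_4(\{1\})|_{\C P^1}$ is the non-trivial stable bundle over $\C P^1$. Proposition \ref{prop:Z/4} then identifies $\Tors \Omega^{O\an{7}}_8(\C P^1; \xi_4(\{1\})|_{\C P^1}) \cong \Z/4$, and the argument underlying Lemma \ref{lem:Omega_8(CP1)} identifies $\Sigmaex$ with the unique element of order $2$ in this $\Z/4$. Since $i_\ast$ factors through this group via the inclusion $\C P^1 \hookrightarrow \C P^\infty$, the problem reduces to showing that the induced map $i_{\C P^1 \ast}$ is non-zero on this $2$-torsion element.

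To verify this I would analyse the twisted Atiyah--Hirzebruch spectral sequence converging to $\Omega^{O\an{7}}_\ast(\C P^\infty; \xi_4(\{1\}))$, equivalently the Thom spectrum $MO\an{8} \wedge \Th(\xi_4(\{1\}))$, and track the image of the $2$-torsion class as one passes from the $\C P^1$-skeleton through higher cells. The class of interest sits in filtration $2$, detected by the codimension-$2$ Arf-invariant, and the plan is to combine the Toda bracket techniques of Section \ref{s:appendix} with an explicit computation of Steenrod operations on the Thom class of $\xi_4(\{1\})$ to argue that no differential or hidden extension originating in cells $\C P^k$ with $k \geq 2$ annihilates it.

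The hardest step will be dispatching these contributions from the higher cells of $\C P^\infty$, in particular ruling out any extension that could trivialise $i_\ast(\Sigmaex)$. By connectivity only the first few cells can contribute, so the problem reduces to a finite computation involving the Thom spectrum of $\xi_4(\{1\})|_{\C P^4}$; this is precisely the programme envisaged in the remark following Conjecture \ref{conj:inertia}. Since $p_1(4, \{1\}) \equiv 3 \pmod{8}$ by the formulae of Section \ref{ss:SD}, the conjecture predicts $I(\C P^4) = 0$, consistent with the statement of the theorem.
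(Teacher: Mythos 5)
The paper does not prove this statement at all: it is imported verbatim from Kasilingam \cite[Remark 2.6\,(1)]{Ka}, whose argument goes through the group of concordance classes of smooth structures on $\C P^4$ rather than through modified surgery. So your proposal is necessarily a different route, and unfortunately it contains a genuine logical gap. Proposition \ref{prop:classification_general} (and Remark \ref{rem:cl-gen}) gives only a \emph{sufficient} condition for a diffeomorphism: bordant normal $3$-smoothings imply diffeomorphic. Your opening sentence upgrades this to an ``if and only if'', and the whole strategy rests on that converse. It is false as stated: a diffeomorphism $\C P^4 \to \C P^4 \sharp \Sigmaex$ would only guarantee that \emph{some} pair of normal $3$-smoothings is bordant, and the canonical smoothings of the two manifolds can a priori be moved within $\Omega^{O\an{7}}_8(\C P^\infty; \xi_4(\{1\}))$ by self-equivalences of the normal $3$-type $(B_4, \xi_4(\{1\}) \times \gamma_{BO\an{8}})$. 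Hence $i_*(\Sigmaex) \neq 0$ does not by itself obstruct a diffeomorphism. The paper is explicit about exactly this point: in the remark following Conjecture \ref{conj:inertia} the authors state that in the non-spin cases with $v_4 \neq 0$ they know of no diffeomorphism-invariant property, beyond the bordism class, that could distinguish $X_4(\dd)$ from $X_4(\dd) \sharp \Sigmaex$ --- which is precisely why those rows of the table are a conjecture rather than a theorem, and why Theorem \ref{thm:K} is quoted from elsewhere rather than derived. If your ``if and only if'' were available, Conjecture \ref{conj:inertia} would not be a conjecture.

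There is a second, independent problem: the computational heart of your argument --- showing that the order-two element of $\Tors \Omega^{O\an{7}}_8(\C P^1; \xi_4(\{1\})|_{\C P^1}) \cong \Z/4$ survives the passage to $\Omega^{O\an{7}}_8(\C P^\infty; \xi_4(\{1\}))$, i.e.\ that $i_*$ is injective on $\Theta_8$ --- is only announced as a programme (``I would analyse the twisted Atiyah--Hirzebruch spectral sequence \dots the plan is to \dots''), not carried out. Note that in the $w_4 = 0$ case the analogous computation (Proposition \ref{prop:i_CP1}) gives the \emph{opposite} answer, $i_*(\Sigmaex) = 0$, so the outcome genuinely depends on delicate features of the Thom spectrum of $\xi_4(\{1\})|_{\C P^4}$ that you have not pinned down. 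To salvage the statement you should either reproduce Kasilingam's argument via smoothing theory (the collapse map $\C P^4 \to S^8$ and the composite $\Theta_8 \to [\C P^4, Top/O]$), or supply both the missing converse to Proposition \ref{prop:classification_general} for this normal type and the explicit spectral-sequence computation.
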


To prove the Sullivan Conjecture for $X_4(\dd)$ when $d$ is odd, we find a new
way to compare normal bordism classes for $X_4(\dd)$ and $X_4(\dd')$, which
is one of the main achievements of this paper.
In particular, we believe that introducing the Hambleton-Madsen \cite{H-M}
theory of degree-$d$ normal invariants will provide a new perspective on the
Sullivan Conjecture in all dimensions.

\subsection{Degree-$r$ normal maps and their normal invariants} \label{ss:drnm}
In this subsection we review the surgery classification of bordism classes of degree-$r$
normal maps for any integer $r$.  Our treatment follows Hambleton and 
Madsen \cite{H-M} but 
with minor modifications to suit our setting.
We will assume that all manifolds and all bundles are oriented and that all bundle maps are orientation
preserving.
We also choose to work with stable normal bundles in the source 
of normal maps, as opposed to stable tangent bundles,
and for simplicity, we only formulate the statements in the special case when the target
space of a degree-$r$ normal map is a closed smooth connected oriented $m$-manifold $P$.

\begin{defin}
Let $M$ and $P$ be closed smooth oriented $m$-manifolds and assume that $P$ is connected.
For $r \in \Z$, a degree-$r$ normal map $(f, \bar{f}) \colon M \to P$ is a map of stable vector bundles
\[
\xymatrix{
\nu_M \ar[r]^{\bar{f}} \ar[d]&
\xi \ar[d] \\
M \ar[r]^{f} &
P}
\]
from the stable normal bundle of $M$ to some stable vector bundle over $P$ such that 
$f \colon M \to P$ has degree $r$.
\end{defin}

When $r = \pm 1$, then $\xi$ is a vector bundle reduction of the Spivak normal fibration of $P$, but in general this only holds away from $r$.
Normal bordism of degree-$r$ normal maps is defined analogously to normal bordism of degree-$1$ normal maps \cite[Proposition 10.2]{Wa2}: the normal maps $(f, \bar{f}) \colon (M, \nu_M) \to (P,\xi)$ and $(f', \bar{f}') \colon (M', \nu_{M'}) \to (P,\xi')$ are normally bordant if there is an isomorphism $\alpha \colon \xi' \rightarrow \xi$ and a bordism between $(f, \bar{f})$ and $(f', \alpha \circ \bar{f}')$ over $(P,\xi)$. 

\begin{defin}
We denote the set of normal bordism classes of degree-$r$ normal maps to $P$ by $\NN_r(P)$,
where the superscript ``$+$'' indicates that we are working in the oriented setting.
\end{defin}

For a fixed $\xi$, let $\Omega^{\fr}_m(P; \xi)_r$ denote the subset of $\Omega^{\fr}_m(P; \xi)$ consisting of bordism classes whose representatives have degree $r$. The group of stable bundle automorphisms of $\xi$, $\Aut(\xi)$, acts on $\Omega^{\fr}_m(P; \xi)_r$ by post-composition. Let $\NN_r(P,\xi) \subseteq \NN_r(P)$ denote the subset of normal bordism classes that are representable by normal maps to $(P,\xi)$. Then we have a canonical bijection
\[
\NN_r(P,\xi) \equiv \Omega^{\fr}_m(P; \xi)_r / \Aut(\xi).
\]
Moreover, 
\[
\NN_r(P) = \bigsqcup_{[\xi]} \, \NN_r(P,\xi) \text{\,,}
\]
where we take the union over the isomorphism classes of stable bundles over $P$ which admit degree-$r$ normal maps.  To distinguish degree-$r$ normal bordism classes from usual bordism classes we use 

\begin{notation} \label{not:bordism}
We shall denote the bordism class of $(f, \bar f)$ in $\Omega^{\fr}_m(P; \xi)_r$ by
$[f, \bar f]_\xi$ and in $\NN_r(P)$ by $[f, \bar f]$.
\end{notation}

As in the degree-$1$ case, 
the computation of $\NN_r(P)$ proceeds via fibrewise degree-$r$ maps between vector bundles. 
Recall that for a bundle $\zeta$, the total space is denoted by $E\zeta$, the disc bundle is $D\zeta$, 
the sphere bundle is $S\zeta$ and the projection is $\pi_{\zeta}$. 
For a space $Y$ with oriented vector bundles $\zeta$ and $\theta$ of the same rank over $Y$, we consider fibrewise maps 
\[
\xymatrix{
S\zeta \ar[rr]^g \ar[dr] &&
S\theta \ar[ld] \\
& Y}
\]
between the associated sphere bundles, where the restriction of $g$ to each fibre has degree $r$.  
Given a fibrewise degree-$r_1$ map $g_1 \colon S\zeta_1 \to S\theta_1$ and a fibrewise
degree-$r_2$ map $g_2 \colon S\zeta_2 \to S\theta_2$, their fibrewise join is a fibrewise
degree-$r_1r_2$ map $g_1 \ast g_2 \colon S(\zeta_1 \oplus \zeta_2) \to S(\theta_1 \oplus \theta_2)$
between the spheres bundles of the Whitney sums of the original bundles.
An isomorphism between two fibrewise degree-$r$ maps, 
$g_i \colon S\zeta_i \to S\theta_i$, $i =0, 1$, is a pair of 
vector bundle isomorphisms $\alpha \colon \zeta_0 \to \zeta_1$ and 
$\beta \colon \theta_0 \to \theta_1$ such that the following diagram commutes
up to fibre homotopy over $Y$,
\[
\xymatrix{
S\zeta_0 \ar[r]^{g_0} \ar[d]_{S\alpha}&
S\theta_0 \ar[d]^{S\beta} \\
S\zeta_1 \ar[r]^{g_1} \ar[r] &
S\theta_1,}
\]
where $S\alpha$ and $S\beta$ are the induced maps of sphere bundles.

\begin{defin}
Two fibrewise degree-$r$ maps are equivalent if they become isomorphic after fibrewise join with the restriction of a vector bundle isomorphism (i.e.\ stabilisation), and we define
\[
\FF_r(Y) := \{ g \colon S\zeta \to S\theta \} / \! \sim
\]
to be the set of equivalence classes of fibrewise degree-$r$ maps of vector bundles over $Y$. The equivalence class of $g$ is denoted by $[g]$. 
\end{defin}

We now review how taking the transverse inverse image of the zero section
is used to define a map $T \colon \FF_r(P) \to \NN_r(P)$.
If $g \colon S\zeta \rightarrow S\theta$ represents an element 
$[g] \in \FF_r(P)$, then we can extend it to a fibre-preserving map $f \colon D\zeta \rightarrow D\theta$ that is transverse to the zero section $P \subset D\theta$. We set $M := f^{-1}(P)$ and $f_M := f \big| _M$.  Since $g$ has degree $r$, the map $f_M \colon M \rightarrow P$ has degree $r$ too. The map $f$ determines a bundle map $\bar{f}_0 \colon \nu(M \rightarrow D\zeta) \rightarrow \nu(P \rightarrow D\theta) \cong \theta$ over $f_M$. We have 
\[
\nu_M \cong \nu(M \rightarrow D\zeta) \oplus \nu_{D\zeta} \big| _M \cong \nu(M \rightarrow D\zeta) \oplus \bigl( \pi_{\zeta} \big| _M \bigr)^*(\nu_P \ominus \zeta) = \nu(M \rightarrow D\zeta) \oplus f_M^*(\nu_P \ominus \zeta).
\]
By adding the canonical map $f_M^*(\nu_P \ominus \zeta) \rightarrow \nu_P \ominus \zeta$ to $\bar{f}_0$, we get a bundle map
\[
\bar{f}_M \colon \nu_M \rightarrow \theta \oplus \nu_P \ominus \zeta
\]
over $f_M$. Then we define $T([g]) := \bigl[ f_M, \bar{f}_M \bigr]$.
For the case when $P$ is a smooth manifold (which we have assumed for simplicity),
the following theorem is 
the oriented version of a foundational result of 
Hambleton and Madsen on degree-$r$ normal maps (the proof of Hambleton and Madsen applies verbatim in the oriented setting).

\begin{theorem}[{cf.\ \cite[Theorem 2.2]{H-M}}]
\label{thm:T}
The map $T \colon \FF_r(P) \to \NN_r(P)$ is a well-defined bijection. \qed
\end{theorem}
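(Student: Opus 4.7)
The plan is to construct $T$ via fibrewise transversality, construct a candidate inverse via a fibrewise Pontryagin-Thom collapse, and then check that the two constructions are mutually inverse.

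\emph{Construction of $T$.} Given a representative $t \colon S\zeta \to S\theta$ of a class in $\FF_r(P)$, with $\zeta$ and $\theta$ of common rank $k$, extend $t$ fibrewise by coning each fibre to obtain a map of disc bundles $\bar t \colon D\zeta \to D\theta$ restricting to $t$ on the boundary. After a small perturbation, $\bar t$ is transverse to the zero section $P \hookrightarrow E\theta$, and the preimage $M := \bar t^{-1}(P)$ lies in $\interior (D\zeta)$ and is a closed smooth $m$-manifold. The fibrewise property of $t$ forces $\pi_\theta \circ \bar t|_M = \pi_\zeta|_M$, and this common map $f \colon M \to P$ has degree $r$ because $t$ has fibrewise degree $r$. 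By the tubular neighbourhood theorem the normal bundle of $P \hookrightarrow E\theta$ is canonically $\theta$, so $\nu(M \hookrightarrow E\zeta) \cong f^*\theta$, and combining this with the stable identity $TE\zeta \simeq \pi_\zeta^*(TP \oplus \zeta)$ produces a canonical stable isomorphism $\nu_M \cong f^*\xi$, where $\xi := \nu_P \oplus \theta \ominus \zeta$. This is a degree-$r$ normal map $(f, \bar f) \colon M \to P$, and we set $T([t]) := [(f, \bar f)] \in \NN_r(P)$. Parametric transversality shows that $T$ is independent of the perturbation, fibrewise join with an identity on a trivial bundle leaves $M$ and its normal data untouched, and an isomorphism $(g,h)$ between representatives intertwines the two transversality constructions via a cobordism, so $T$ descends to $\FF_r(P)$.

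\emph{Inverse.} Given a degree-$r$ normal map $(f, \bar f) \colon M \to P$ with $\bar f \colon \nu_M \to \xi$, choose a vector bundle $\zeta$ over $P$ large enough that the map $f$ admits a smooth embedding $e \colon M \hookrightarrow E\zeta$ with $\pi_\zeta \circ e = f$; such an embedding exists when $\zeta = f^*\eta$ for $\eta$ of sufficiently large rank. Using the data $\bar f$, the normal bundle $\nu(e)$ is stably isomorphic to $f^*\theta$ for $\theta := \xi \oplus \zeta \ominus \nu_P$, and after further stabilisation we may realise $\theta$ as a genuine vector bundle over $P$ and the stable isomorphism as an unstable isomorphism of bundles over $M$. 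The Pontryagin-Thom collapse associated to $e$ is then a fibrewise map $\Th(\zeta) \to \Th(\theta)$ over $P$, equivalently a fibrewise degree-$r$ map of sphere bundles after a single suspension. Its class in $\FF_r(P)$ is independent of the choices by uniqueness of tubular neighbourhoods and isotopy extension for embeddings over a fixed base map, giving a candidate inverse $S \colon \NN_r(P) \to \FF_r(P)$.

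Verifying $T \circ S = \id$ and $S \circ T = \id$ is then a direct fibrewise Pontryagin-Thom argument: for an embedding $e \colon M \hookrightarrow E\zeta$ arising from the inverse construction the associated collapse is transverse to the zero section precisely along $M$, while for the manifold $M = \bar t^{-1}(P)$ produced by $T$ the collapse on a tubular neighbourhood reconstructs $\bar t$ up to fibrewise homotopy and trivial stabilisation. The main obstacle I expect is the bookkeeping of stable versus unstable vector bundle data: the inverse construction naturally produces $\theta$ only as a stable class, and invariance under the stabilisation choices is precisely what forces the equivalence relation defining $\FF_r(P)$ to include fibrewise joins with identities on arbitrary vector bundles. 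A secondary subtlety is that when $r \neq 1$ the bundle $\xi$ is not a vector bundle reduction of the Spivak normal fibration globally, so the normal invariant in $\NN_r(P)$ must be defined using $\nu_M$ rather than $TM$, which is the convention adopted here.
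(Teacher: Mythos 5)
The paper does not actually prove this statement: it is quoted from Hambleton--Madsen \cite[Theorem 2.2]{H-M}, and the text following the theorem only records the description of the forward map $T$ (extend $t$ to a fibre-preserving map of disc bundles, make it transverse to the zero section, take the preimage with its induced degree-$r$ map and normal data). Your construction of $T$ reproduces that description exactly, and your inverse via a fibrewise Pontryagin--Thom collapse is the standard argument by which bijectivity is established in \cite{H-M}, so the approach is the expected one and the sketch is sound. Two small points to tighten: in the inverse step the bundle $\zeta$ must live over $P$, so ``$\zeta = f^*\eta$'' does not type-check --- take $\zeta$ trivial of large rank, so that $E\zeta = P \times \R^N$ and $e = (f, e_0)$ for an embedding $e_0 \colon M \to \R^N$; and well-definedness of $T$ must be checked against the full equivalence relation on $\FF_r(P)$, namely fibrewise join with the sphere bundle of an \emph{arbitrary} vector bundle isomorphism and squares commuting only up to \emph{stable fibre homotopy} (the homotopy itself must be fed into the transversality construction to produce the normal bordism), not merely join with identities on trivial bundles. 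Both are routine but should be stated.
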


\begin{remark} \label{rem:T}
In \cite{H-M} the source manifold $M$ is defined as the inverse image under $g$ of a section of $S\theta$. The construction above can be seen as a special case of this via stabilisation, as we now explain.
Let $\ul \R := (\R \times P \to P)$ denote the trivial rank-$1$ bundle over $P$ and let $\ul S^0 := S\,\ul \R$.
If $\zeta = \zeta' \oplus \ul \R$, $\theta = \theta' \oplus \ul \R$ and $g = g' \ast \Id_{\ul{S^0}} \colon S\zeta = S\zeta' \ast \ul{S^0} \to S\theta = S\theta' \ast \ul{S^0}$ for some $\zeta'$, $\theta'$ and $g' \colon S\zeta' \to S\theta'$, then $S\zeta = D_+\zeta' \cup_{S\zeta'} D_-\zeta'$ (where $D_{\pm}\zeta'$ are two copies of the disc bundle $D\zeta'$), $S\theta = D_+\theta' \cup_{S\theta'} D_-\theta'$, $f' := g \big| _{D_+\zeta'} \colon D_+\zeta' \rightarrow D_+\theta'$ is an extension of $g'$, and the zero section of $D_+\theta'$ coincides with the section $1 \times P \subset S^0 \times P = \ul{S^0} \subset S\theta' \ast \ul{S^0} = S\theta$ of $S\theta$.
\end{remark}

In order to apply Theorem \ref{thm:T} we need to be able to compute $\FF_r(P)$. The assignment $Y \mapsto \FF_r(Y)$ is a homotopy functor from the category of spaces to the category of sets. By Brown representability \cite{Br3}, this functor (restricted to CW-complexes) is represented by a classifying space. 

\begin{defin}
The classifying space of the functor $\FF_r$ is denoted by $(\qsnmo)_r$,
and the canonical bijection from $\FF_r(Y)$ to $[Y, (\qsnmo)_r]$ is denoted by  
\[
\Br \colon \FF_r(Y) \rightarrow [Y, (\qsnmo)_r].
\]
\end{defin}

\begin{remark}
Hambleton and Madsen \cite[\S1]{H-M} and Brumfiel and Madsen \cite[\S 4]{B-M} allow orientation-reversing bundle isomorphisms when they define the equivalence relation on fibrewise degree-$r$ maps. They denote the corresponding classifying space by $(QS^0/O)_r$ for $r \geq 0$. The forgetful map induces a map $(\qsnmo)_r \to (QS^0/O)_{|r|}$ of classifying spaces, which is a homotopy equivalence when $r \neq 0$ and a non-trivial $(\Z/2)$-covering when $r = 0$ (see \cite[Proposition 4.3]{B-M}).

When $r = 1$, we may identify $(\qsnmo)_1 = G/O$, where $G/O$ is the homotopy fibre of the canonical map $BSO \to BSG$, the forgetful map from the classifying space of stable vector bundles to the classifying space of stable spherical fibrations.
\end{remark}

The equivalence $\Br$ and Theorem \ref{thm:T} combine to give the following important

\begin{defin} \label{def:eta1}
Let $\eta \colon \NN_r(P) \rightarrow [P, (\qsnmo)_r]$ denote the composition $\Br \circ T^{-1}$. 
For a degree-$r$ normal map $(f, \bar{f}) \colon M \to P$, 
the homotopy class $\eta([f, \bar{f}]) \in [P, (\qsnmo)_r]$ is called the 
{\em normal invariant} of $(f, \bar{f})$. 
\end{defin}

We shall need two classes of examples of fibrewise degree-$r$ maps. The {\em trivial degree-$r$ map} of rank $k$ (well-defined up to fibre homotopy) is $h \times \Id_Y \colon S^{k-1} \times Y \rightarrow S^{k-1} \times Y$ for some degree-$r$ map $h \colon S^{k-1} \to S^{k-1}$.
For the second class of degree-$r$ maps, $\zeta$ has real rank $2$,
and we regard $\zeta$ as a complex line bundle over $Y$.
Setting $\theta := \zeta^r$ to be the $r$-fold complex tensor product of $\zeta$ with itself,
we have the {\em canonical degree-$r$ map}
\[ t_r(\zeta) \colon S\zeta \to S\zeta^r, 
\quad v \mapsto v^r = v \otimes v \otimes \ldots  \otimes v.\]
For the classification of complete intersections the universal examples of
such maps, where 
$Y = {\C}P^n$ or ${\C}P^\infty$ and $\zeta = \gamma \big| _{\C P^n}$ or $\gamma$,
will play a central role. 

\begin{defin} \label{def:eta2}
For a $k$-tuple of integers $\ul r = (r_1, \ldots, r_k)$ with $r = r_1r_2 \ldots r_k$ set
\[
\begin{aligned}
& & \eta_n(\ul r) &:= \Br([t_{r_1}(\gamma\big| _{\C P^n}) \ast \ldots \ast t_{r_k}(\gamma\big| _{\C P^n})]) \!\!\!\!\!\!& &\in [\C P^n, (\qsnmo)_r] \\
\text{and} \quad & & \eta_\infty(\ul r) &:= \Br([t_{r_1}(\gamma) \ast \ldots \ast t_{r_k}(\gamma)]) & &\in [\C P^\infty, (\qsnmo)_r].
\end{aligned}
\]
\end{defin}
\noindent
The notation in Definition \ref{def:eta2} is designed to match Theorem \ref{thm:eta_of_X}, which states that the complete intersection $X_n(\dd)$ admits a degree-$d$ normal map $(f_n(\dd), \bar f_n(\dd)) \colon X_n(\dd) \to \C P^n$ such that $\eta([f_n(\dd), \bar f_n(\dd)]) = \eta_n(\dd)$.

\subsection{The space $(\qsnmo)_r$ and connected sums} \label{ss:action} 
In order to apply Theorem \ref{thm:T} we will need to
make computations with the set of normal invariants $[P, (\qsnmo)_r]$.
For this we need information about the space $(\qsnmo)_r$, and 
we first adapt
the discussion of the related space $(QS^0/O)_r$ from Brumfiel and Madsen \cite[\S 4]{B-M}
to the oriented setting.
When $r = 1$, the space $(\qsnmo)_1 = G/O$ has been extensively studied.
In general, Brumfiel and Madsen \cite[Proposition 4.3]{B-M}
showed that there is a fibration sequence
\begin{equation} \label{eq:fib-qsnmo}
QS^0_r \xra{~i_r~} (\qsnmo)_r \xra{~\delta_r~} BSO,
\end{equation}
where the map $\delta_r \colon (\qsnmo)_r \to BSO$ classifies taking the formal difference of the source and target vector bundles of a fibrewise degree-$r$ map, $QS^0_r$ is the space of stable degree-$r$ self maps of the sphere, which classifies fibrewise degree-$r$ self-maps of trivialised vector bundles and
$i_r \colon QS^0_r \to (\qsnmo)_r$ classifies forgetting that the bundles are trivialised.
The space $QS^0_1$ is often denoted $SG$.
There is also a map $\cdot r \colon G/O \to (\qsnmo)_r$, which classifies taking fibrewise join with the trivial degree-$r$ map and which fits into the following map of fibration sequences
\begin{equation} \label{eq:fib-qsnmo2}
\xymatrix{
SG \ar[r]^-{i_1} \ar[d]_{\cdot r} & 
G/O \ar[r]^-{\delta} \ar[d]_{\cdot r} &
BSO \ar[d]^{\Id_{BSO}} \\
QS^0_r \ar[r]^-{i_r} &
(\qsnmo)_r \ar[r]^-{\delta_r} &
BSO,
}
\end{equation}
where $\cdot r \colon SG = QS^0_1 \to QS^0_r$ is the map obtained by composition
with a fixed map of degree $r$ and $\delta := \delta_1$ is the canonical map.

Since $QS^0 := \bigsqcup_{r \in \Z} QS^0_r$, the space of stable self maps of the sphere, is a grouplike $H$-space (with the ``loop sum" operation, which induces the addition on $\pi_0^s$), 
its connected components are all homotopy equivalent. So $\pi_i(QS^0_r) \cong \pi_i(SG) = \pi^s_i$ for all $r$ and $i$, and under this identification $\pi_i(\cdot r) \colon \pi_i(SG) \to \pi_i(QS^0_r)$ is multiplication by $r$. 
Therefore when we invert the primes dividing $r$, the map $\cdot r$ becomes a weak homotopy equivalence and hence a homotopy equivalence (see \cite[Proposition 4.6]{B-M}). Combining this with the commutative diagram of \eqref{eq:fib-qsnmo2}, we get

\begin{prop}[cf.\ {\cite[Proposition 4.6]{B-M}}] \label{prop:B-M}
If $r \neq 0$, the map $\cdot r \colon G/O \to (\qsnmo)_r$ induces a homotopy equivalence 
$(\cdot r)[1/r] \colon (G/O)[1/r] \simeq (\qsnmo)_r[1/r]$ such that
$\delta_r[1/r] \circ (\cdot r)[1/r] \simeq \delta[1/r]$. 
\qed
\end{prop}

Here $X[1/r]$ denotes the localisation of a space $X$ obtained by inverting the primes dividing $r$ (see \cite[Chapter 2]{Su}), and for a map $f \colon X \rightarrow Y$, $f[1/r] \colon X[1/r] \rightarrow Y[1/r]$ denotes the induced map. 

We now consider the effect of taking connected sum with a framed manifold in the source of a normal map. 
For this, we will tacitly assume that all manifolds have basepoints, the bundles considered over these basepoints have been trivialised (and for a fibrewise degree-$r$ map, the map over the basepoint is identified with some fixed degree-$r$ map between spheres) and that the connected sum operation is carried out at discs which have the basepoints on their boundaries so that the connected sum is itself based.

Suppose that $(f, \bar f) \colon M \to P$ is an $m$-dimensional degree-$r$ normal map and 
$[N, F] \in \Omega^{\fr}_m$. Let $f_N \colon N \rightarrow P$ be the constant map and $\bar{f}_N$ be the bundle map over $f_N$ corresponding to the framing $F$.  
We can assume that $(f, \bar f)$ is constant over a small $m$-disc $D^m \subset M$, and by taking
connected sum in the source and extending with the constant map, we obtain a degree-$r$ normal map
$(f \sharp f_N, \bar{f} \sharp \bar{f}_N) \colon M \sharp N \rightarrow P$.
Connected sum defines a natural operation
\[ 
\sharp \colon \NN_r(P) \times \Omega^\fr_m \to \NN_r(P),
\quad ([f, \bar f], [N, F]) \mapsto [f \sharp f_N, \bar f \sharp \bar f_N],
\]
and we will explain how to determine $\eta([f \sharp f_N, \bar f \sharp \bar f_N])$ in terms of $\eta([f, \bar f])$ and $[N, F]$.
To do this, we need to define an appropriate normal invariant of $[N, F]$. 
Define the homomorphism
\[ \eta^\fr_r \colon \Omega^\fr_m \to \pi_m(QS^0_r) \]
to be the composition $\Omega^{\fr}_m \xra{\PT} \pi^s_m \xra{\Ad} \pi_m(QS^0_0) \xra{\ls_{r*}} \pi_m(QS^0_r)$, where $\PT$ denotes the Pontryagin-Thom isomorphism, $\Ad$ is defined via the adjoint map, $\ls_r$ is given by taking the loop sum with a fixed degree-$r$ map and $\ls_{r*}$ is the induced map on $\pi_m$.

For any connected, oriented $m$-manifold $P$ and topological space $X$, connected sum of maps defines an action $\sharp \colon [P,X] \times \pi_m(X) \rightarrow [P,X]$ of $\pi_m(X)$ on $[P,X]$, where we can and do assume that maps are constant at the same value in a neighbourhood of the basepoints. This action is natural in $X$, i.e.\ a map $f \colon X \rightarrow Y$ determines a commutative diagram
\begin{equation} \label{eq:nat}
\xymatrix{
[P ,X] \times \pi_m(X) \ar[r]^-{\sharp} \ar[d]_-{f_* \times f_*} & [P, X] \ar[d]^-{f_*} \\
[P, Y] \times \pi_m(Y) \ar[r]^-{\sharp} & [P, Y].
}
\end{equation}

\begin{lemma} \label{lem:cs}
Let $(f, \bar f) \colon M \to P$ be an $m$-dimensional degree-$r$ normal map and $[N, F] \in \Omega^{\fr}_m$. The normal invariant of $(f \sharp f_N, \bar{f} \sharp \bar{f}_N) \colon M \sharp N \rightarrow P$ is given by
\[ 
\eta([f \sharp f_N, \bar{f} \sharp \bar{f}_N]) = \eta([f, \bar f]) \, \sharp\,  i_{r*}(\eta^\fr_r([N, F])) \in [P,(\qsnmo)_r],
\] 
where $i_r \colon QS^0_r \to (\qsnmo)_r$ is the inclusion of the fibre appearing in \eqref{eq:fib-qsnmo}.
\end{lemma}

\begin{proof}
We will prove that there is a commutative diagram
\[
\xymatrix{
\NN_r(P) \times \Omega^{\fr}_m \ar[d]_-{\sharp} & &
\FF_r(P) \times \pi^s_m \ar[ll]_-{T \times \PT^{-1}} \ar[r]^-{\Id \times \alpha} \ar[d] & 
\FF_r(P) \times \FF_r(S^m) \ar[r]^-{\Br \times \Br} \ar[d]^-{\sharp} & 
[P,(\qsnmo)_r] \times \pi_m((\qsnmo)_r) \ar[d]^-{\sharp} \\
\NN_r(P) & &
\FF_r(P) \ar[ll]_-{T} \ar@{=}[r] &
\FF_r(P) \ar[r]^-{\Br} & 
[P,(\qsnmo)_r],
}
\]
where $\alpha$ is the composition of $\Ad \colon \pi^s_m \cong \pi_m(QS^0_0)$, $\ls_{r*} \colon \pi_m(QS^0_0) \cong \pi_m(QS^0_r)$ and the canonical map $\pi_m(QS^0_r) \rightarrow \FF_r(S^m)$ which sends a homotopy class to the adjoint fibrewise degree-$r$ map between trivialised bundles over $S^m$. The second and third vertical maps will be defined in the course of the proof below. 

The commutativity of the diagram above suffices to prove the lemma, because $i_{r*}$ is the composition of the canonical map $\pi_m(QS^0_r) \rightarrow \FF_r(S^m)$ and $\Br$ (recall that $QS^0_r$ is the classifying space of fibrewise degree-$r$ maps between trivialised bundles and $i_r$ classifies forgetting that the bundles are trivialised). 

First we define the map $\FF_r(P) \times \pi^s_m \rightarrow \FF_r(P)$ and show that the first square commutes. Let $g \colon S\zeta \to S\theta$ represent an element of $\FF_r(P)$, where $\zeta$ and $\theta$ are bundles of rank $k{+}1 \gg m$ of the form $\zeta = \zeta' \oplus \ul \R$ and $\theta = \theta' \oplus \ul \R$. We define the sections $s_{\pm} \colon P \to S\theta$ by $s_{\pm}(x) = (\pm 1, x) \in S^0 \times P \subset S(\theta' \oplus \ul \R)$, and assume that $g$ is  transverse to $s_+(P)$, so that $T([g])$ is represented by 
$g^{-1}(s_+(P)) \rightarrow s_+(P) \approx P$ (covered by the appropriate bundle map); see Remark \ref{rem:T}.

Let $D^m \subset P$ be a small embedded disc, then $\zeta|_{D^m} = \R^{k+1} \times D^m$ and $\theta_{D^m} =  \R^{k+1} \times D^m$ are uniquely trivialised (up to homotopy). After a fibre homotopy of $g$ we may assume that $g \big| _{S\zeta|_{D^m}}$ is trivial, i.e.\ $g \big| _{S\zeta|_{D^m}} = h \times \Id_{D^m}$ for some degree-$r$ map $h \colon S^k \rightarrow S^k$, and that for some small disc $D^k \subset S^k$ the map $h \big| _{D^k}$ is constant with value $-1 \in S^0 \subset S^{k-1} \ast S^0 \approx S^k$. Then $g(D^k \times D^m) = s_-(D^m)$, so $(g \big| _{D^k \times D^m})^{-1}(s_+(P))$ is empty.  

The adjoint of $g \big| _{D^k \times D^m}$ is the constant map in $\Map((D^m, S^{m-1}), (\Map((D^k, S^{k-1}), (S^k, -1)), c_{-1}))$, where $c_{-1}$ denotes the constant map with value $-1$. Identifying $\Map((D^k, S^{k-1}), (S^k, -1)) = \Omega^k S^k$ we have the isomorphism
\[ 
\Phi \colon \pi_0 \bigl( \Map((D^m, S^{m-1}), (\Map((D^k, S^{k-1}), (S^k, -1)), c_{-1})) \bigr) \cong \pi_m(\Omega^k S^k) \cong \pi_{m+k}(S^k) = \pi^s_m.
\]
For $a \in \pi^s_m$, let $u_a \colon D^k \times D^m \to S^k \times D^m$ be the adjoint of a representative of $\Phi^{-1}(a)$. We define the fibrewise degree-$r$ map $g_a \colon S\zeta \to S\theta$ by 
\[
g_a(v) = 
\begin{cases}
u_a(v) & \text{if $v \in D^k \times D^m$} \\
g(v) & \text{if $v \in S\zeta \setminus \interior(D^k \times D^m)$.}
\end{cases}
\]
The map $g_a$ is well-defined and continuous, because $g$ and $u_a$ agree on $\partial(D^k \times D^m)$. We define the map $\FF_r(P) \times \pi^s_m \rightarrow \FF_r(P)$ by $([g],a) \mapsto [g_a]$. 

We may assume that $u_a$ is transverse to $1 \times D^m$, then $u_a^{-1}(1 \times D^m)$ is the image of $a$ under the Pontryagin construction. Moreover, $g_a$ is transverse to $s_+(P)$ and $g_a^{-1}(s_+(P)) = g^{-1}(s_+(P)) \sqcup u_a^{-1}(1 \times D^m)$. This is bordant to $g^{-1}(s_+(P)) \sharp u_a^{-1}(1 \times D^m)$, showing that $T([g_a])=T([g]) \sharp \PT^{-1}(a)$. Thus the first square commutes.

Next we define the map $\sharp \colon \FF_r(P) \times \FF_r(S^m) \rightarrow \FF_r(P)$ and consider the second square. Given fibrewise degree-$r$ maps over $P$ and $S^m$, we fix embeddings $D^m \rightarrow P$ and $D^m \rightarrow S^m$ at the basepoints and identify the restrictions of both maps with $h \times \Id_{D^m}$. Then we can take their connected sum over $P \sharp S^m \approx P$.

Now let $[g] \in \FF_r(P)$ and $a \in \pi^s_m$ as before. We let $g^0 := h \times \Id_{S^m} \colon S^k \times S^m \rightarrow S^k \times S^m$ be a trivial degree-$r$ map and construct $g^0_a \colon S^k \times S^m \rightarrow S^k \times S^m$ from $u_a$ and $g^0$, analogously to $g_a$. Then $[g_a] = [g] \sharp [g^0_a]$ (the connected sum $P \sharp S^m$ is formed using the embedding $D^m \rightarrow P$ that was previously used to construct $g_a$ and an embedding $D^m \rightarrow S^m$ whose image is the complement of the one used to construct $g^0_a$). We can extend $u_a$ with the projection $D^k \times (S^m \setminus D^m) \rightarrow \{ -1 \} \times (S^m \setminus D^m) \subset S^k \times (S^m \setminus D^m)$ to get $\tilde{u}_a \colon D^k \times S^m \rightarrow S^k \times S^m$. This is again the adjoint of $a$, and also of the corresponding element $a' \in \pi_m(QS^0_0)$. Since $g^0(y,x)= (h(y),x)$ for every $(y,x) \in S^k \times S^m$ and $h(y)=-1$ if $y \in D^k$, we have 
\[
g^0_a(v) = 
\begin{cases}
u_a(v) & \text{if $v \in D^k \times D^m$} \\
g_0(v) & \text{if $v \in S^k \times S^m \setminus \interior(D^k \times D^m)$}
\end{cases}
= 
\begin{cases}
\tilde{u}_a(v) & \text{if $v \in D^k \times S^m$} \\
g_0(v)=(h(y),x) & \text{if $v=(y,x) \in (S^k \setminus \interior D^k) \times S^m$.}
\end{cases}
\] 
Since $\ls_r \colon QS^0 \rightarrow QS^0_r$ is defined by taking loop sum with $h$, the map $g^0_a$ is the adjoint of $\ls_{r*}(a')$. Therefore $[g_a]= [g] \sharp \alpha(a)$, proving that the second square commutes.

Finally we consider the third square. For any space $X$, the action $\sharp \colon [P,X] \times \pi_m(X) \rightarrow [P,X]$ is equal to the composition 
\[
[P,X] \times \pi_m(X) \xra{\vee} [P \vee S^m,X] \xra{p^*} [P,X]
\] 
where $p \colon P \rightarrow P / S^{m-1} \approx P \vee S^m$ is the pinch map, collapsing the boundary of a small embedded $m$-disc $D^m \subset P$. Similarly, $\sharp \colon \FF_r(P) \times \FF_r(S^m) \rightarrow \FF_r(P)$ is the composition of $\vee \colon \FF_r(P) \times \FF_r(S^m) \rightarrow \FF_r(P \vee S^m)$ and 
$p^* \colon \FF_r(P \vee S^m) \rightarrow \FF_r(P)$. So the commutativity of the third square follows from the naturality of $\Br$.
\end{proof}

\subsection{Relative divisors}
In this subsection we give another description of the bijection $T \colon \FF_r(P) \to \NN_r(P)$ from Theorem \ref{thm:T} in terms of sections and divisors.
In order to relate fibre-preserving maps to sections we introduce the following notation.

\begin{defin} \label{def:bij}
Suppose that $\zeta$ and $\theta$ are vector bundles over some base space $Y$.

(a) For a fibre-preserving map $g \colon S\zeta \rightarrow S\theta$ we define a section $s_g \colon S\zeta \rightarrow \bigl( \pi_\zeta \big| _{S\zeta} \bigr)^*(S\theta) \subseteq S\zeta \times S\theta$ of the sphere bundle $\bigl( \pi_\zeta \big| _{S\zeta} \bigr)^*(S\theta)$, the pull-back of $S\theta$ via the projection $\pi_\zeta \big| _{S\zeta} \colon S\zeta \rightarrow Y$, by $s_g(x) = (x, g(x))$. The assignment $g \mapsto s_g$ is a bijection between fibre-preserving maps $S\zeta \rightarrow S\theta$ and sections of $\bigl( \pi_\zeta \big| _{S\zeta} \bigr)^*(S\theta)$.

(b) For a fibre-preserving map $f \colon D\zeta \rightarrow D\theta$ we define a section $s_f \colon D\zeta \rightarrow \bigl( \pi_\zeta \big| _{D\zeta} \bigr)^*(D\theta) \subseteq D\zeta \times D\theta$ by $s_f(x) = (x, f(x))$. The assignment $f \mapsto s_f$ is a bijection between fibre-preserving maps $D\zeta \rightarrow D\theta$ and sections of $\bigl( \pi_\zeta \big| _{D\zeta} \bigr)^*(D\theta)$. Moreover, $f$ is transverse to the zero section of $D\theta$ if and only if $s_f$ is transverse to the zero section of $\bigl( \pi_\zeta \big| _{D\zeta} \bigr)^*(D\theta)$. 

These two bijections are compatible in the sense that if $g$ is the restriction of some $f$, then $s_g$ is the restriction of $s_f$.
\end{defin}

Now suppose that $\tilde{\theta}$ is a rank-$k$ smooth vector bundle over a smooth manifold $V$ with boundary $\partial V$ and $s_{\partial} \colon \partial V \to S\tilde{\theta} \big| _{\partial V}$ is a section of $S\tilde{\theta} \big| _{\partial V}$ (hence a nowhere zero section of $E\tilde{\theta} \big| _{\partial V}$). 

\begin{defin}
If $s \colon V \to E\tilde{\theta}$ is a smooth section of $\tilde{\theta}$, which extends $s_{\partial}$, and which is transverse to the zero section, $s_0$, then we call
\[
Z(s) := s(V) \cap s_0(V) \subset s_0(V) \approx V
\]
a {\em divisor of $\tilde{\theta}$ relative to $s_{\partial}$}.
\end{defin}

\begin{rem} \label{rem:divisor}
We have $\nu_{Z(s)} \cong (\tilde{\theta} \oplus \nu_V) \big| _{Z(s)}$, because the normal bundle of the embedding $Z(s) \hookrightarrow V$ is given by $\nu(Z(s) \rightarrow V) \cong \tilde{\theta} \big| _{Z(s)}$ (in a tubular neighbourhood $U \approx D\nu(Z(s) \rightarrow V)$ of $Z(s)$ the section $s \big| _U$ corresponds to a fibre-preserving map $D\nu(Z(s) \rightarrow V) \rightarrow E\tilde{\theta} \big| _{Z(s)}$, and by transversality the restriction of its derivative is an isomorphism $\nu(Z(s) \rightarrow V) \cong \tilde{\theta} \big| _{Z(s)}$). 

Since the fibre of $E\tilde{\theta}$ is contractible, $s_{\partial}$ can always be extended to a (transverse) section $s$ and the extension is unique up to homotopy (rel $\partial V$). This also implies that the normal bordism class of the normal map
\[
\xymatrix{
\nu_{Z(s)} \ar[r] \ar[d] & \tilde{\theta} \oplus \nu_V \ar[d] \\
Z(s) \ar[r] & V
}
\]
is independent of the choice of $s$ (and it only depends on the homotopy class of $s_{\partial}$ as a nowhere zero section). 
\end{rem}

Suppose in addition that $V = D\zeta$ itself is the disc bundle of a rank-$k$ smooth vector bundle $\zeta$ over a closed smooth manifold $P$. Let $\theta = \tilde{\theta} \big| _{P}$ be the restriction of $\tilde{\theta}$, then $\tilde{\theta}$ can be identified with $\bigl( \pi_{\zeta} \big| _{D\zeta} \bigr)^*(\theta)$. 

Let $g \colon S\zeta \rightarrow S\theta$ be a fibrewise degree-$r$ map and $s_g$ the corresponding section (see Definition \ref{def:bij}). There exists a section $s \colon D\zeta \rightarrow \bigl( \pi_\zeta \big| _{D\zeta} \bigr)^*(D\theta)$ that extends $s_g$ and is transverse to the zero section. Let $p = \pi_{\zeta} \big| _{Z(s)} \colon Z(s) \rightarrow P$.

\begin{lem} \label{lem:div}
The map $p \colon Z(s) \rightarrow P$ has degree $r$ and it is covered by a bundle map $\bar{p} \colon \nu_{Z(s)} \rightarrow \theta \oplus \nu_P \ominus \zeta$ such that
\[ 
T([g]) = [p, \bar{p}] \in \NN_r(P) \text{\,.}
\]
\end{lem}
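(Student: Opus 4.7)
The plan is to exhibit Lemma \ref{lem:div} as essentially a direct translation of the construction of $T([g])$ given after Theorem \ref{thm:T}, via the bijection of Definition \ref{def:bij}\,(b). No geometric surgery argument is required: the two constructions will produce the same normal map on the nose, modulo a choice of transverse extension.

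First, I would choose a transverse fibre-preserving extension $f \colon D\zeta \to D\theta$ of $g$ exactly as in the description of $T$, and take $s := s_f$ in the sense of Definition \ref{def:bij}\,(b). Since the zero-section of $\bigl(\pi_\zeta\big|_{D\zeta}\bigr)^*(D\theta) \subset D\zeta \times D\theta$ projects to the zero-section $P \subset D\theta$, the section $s_f$ extends $s_g$ and is transverse to the zero-section precisely because $f$ is. As the paper notes, any two such transverse extensions are homotopic rel $\partial D\zeta$, so this choice is immaterial for $[p, \bar{p}]$.

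Next I identify the underlying maps. By construction $s_f(x) = (x, f(x))$ lies in the zero-section iff $f(x) \in P$, so
\[
Z(s_f) = f^{-1}(P) = M,
\]
where $M$ is as in the construction of $T$. Because $f$ is fibre-preserving we have $\pi_\theta \circ f = \pi_\zeta$; for $x \in M$ the point $f(x)$ lies in the zero-section, so $\pi_\theta(f(x)) = f(x)$, which forces $f(x) = \pi_\zeta(x)$. Hence $f_M = p$ as maps $M \to P$. The degree statement $\deg(p) = r$ is then the same assertion already recorded in the description of $T$: the fibrewise degree of $g$ equals the degree of $f_M$.

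Finally, I match the bundle maps. Transversality of $f$ supplies the canonical isomorphism $\nu(M \to D\zeta) \cong f_M^*\,\nu(P \to D\theta) \cong p^*\theta$ used to build $\bar{f}_0$, while transversality of $s_f$ to the zero-section of $\pi_\zeta^*(D\theta)$ gives exactly the same isomorphism $\nu(Z(s_f) \to D\zeta) \cong \pi_\zeta^*(\theta)\big|_{Z(s_f)} = p^*\theta$, because the vertical component of $ds_f$ at points of $M$ is $df$. Together with the tautological identification $\nu_{D\zeta}\big|_M \cong p^*(\nu_P \ominus \zeta)$ and the canonical map $p^*(\nu_P \ominus \zeta) \to \nu_P \ominus \zeta$, this yields $\bar{p} = \bar{f}_M$ as bundle maps $\nu_M \to \theta \oplus \nu_P \ominus \zeta$ over $p = f_M$, so $[p, \bar{p}] = [f_M, \bar{f}_M] = T([g])$.

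The only potential obstacle is bookkeeping: one must keep track of two parallel normal-bundle splittings and verify that the transversality data of $f$ and of $s_f$ induce literally the same clutching. Beyond this, the proof is a definition-unwinding and should be short.
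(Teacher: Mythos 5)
Your proposal is correct and is essentially the paper's own proof: both arguments unwind the bijection of Definition \ref{def:bij}\,(b) to identify the transverse extension $f$ of $g$ used in the construction of $T$ with the transverse section $s = s_f$, observe that $Z(s_f) = f^{-1}(P)$ and $f\big|_M = \pi_\zeta\big|_{Z(s)} = p$, and take $\bar p = \bar f_M$. The only cosmetic difference is the direction of travel (the paper starts from the given $s$ and produces $f$ with $s = s_f$, whereas you start from $f$ and invoke the independence-of-extension remark to handle the given $s$), which is immaterial.
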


\begin{proof}
Using the bijection from Definition \ref{def:bij}\,(b) there is a fibre-preserving map $f \colon D\zeta \rightarrow D\theta$ such that $s = s_f$. This $f$ extends $g$ and it is transverse to the zero section, so it satisfies the conditions in the definition of $T$ (given before Theorem \ref{thm:T}). The manifold $M = f^{-1}(P)$ is then equal to $Z(s)$ and $f \big| _M = p$ (and it has degree $r$). We can choose $\bar{p} = \bar{f}_M$ and then $T([g]) = [p, \bar{p}]$.
\end{proof}

\subsection{The canonical degree-$d$ normal invariant of a complete intersection}
Consider a complete intersection $X_n(\dd)$. By cellular approximation the canonical embedding $i \colon X_n(\dd) \rightarrow \C P^{n+k}$ is homotopic to a map $f_n(\dd) \colon X_n(\dd) \rightarrow \C P^n$ and since $\C P^{n+k}$ 
has no $(2n{+}1)$-cells, $f_n(\dd)$ is well-defined up to homotopy. Since $i_*([X_n(\dd)])$ is $d$ times the preferred generator of $H_{2n}(\C P^n)$, $f_n(\dd)$ is a degree-$d$ map. 

The main result of this section is the computation of the normal invariant of a certain degree-$d$ normal map covering $f_n(\dd)$ in Theorem \ref{thm:eta_of_X} below. The importance of this calculation comes from the next lemma (which can be regarded as a variation of \cite[Proposition 10]{Kr}) and its application, Theorem \ref{thm:SC_via_cni}.

\begin{lem} \label{lem:SC_via_cni0}
Let $X_n(\dd)$ and $X_n(\dd')$ be complete intersections with
$\chi(X_n(\dd)) = \chi(X_n(\dd'))$ and the same total degree $d$.
Suppose that there are degree-$d$ normal maps
$(f, \bar{f}) \colon (X_n(\dd), \nu_{X_n(\dd)}) \rightarrow ({\C}P^n, \xi_n(\dd) \big| _{{\C}P^n})$ and $(f', \bar{f}') \colon (X_n(\dd'), \nu_{X_n(\dd')}) \rightarrow ({\C}P^n, \xi_n(\dd') \big| _{{\C}P^n})$ such that 
\[
[f, \bar{f}] = [f', \bar{f}'] \in \NN_d({\C}P^n) \text{\,.}
\] 
If $n \geq 3$, then $X_n(\dd)$ and $X_n(\dd')$ are diffeomorphic.
\end{lem}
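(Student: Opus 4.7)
The plan is to translate the hypothesis $[f, \bar{f}] = [f', \bar{f}']$ in $\NN_d({\C}P^n)$ into a bordism of normal $(n{-}1)$-smoothings of $X_n(\dd)$ and $X_n(\dd')$ over the common target $(B_n, \xi_n(\dd) \times \gamma_{BO\an{n+1}})$, and then invoke Proposition~\ref{prop:classification_general}.

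First, since $\NN_d({\C}P^n) = \bigsqcup_{[\xi]} \NN_d({\C}P^n, \xi)$ is a disjoint union indexed by isomorphism classes of stable bundles, the hypothesis forces an isomorphism $\alpha \colon \xi_n(\dd')\big|_{{\C}P^n} \to \xi_n(\dd)\big|_{{\C}P^n}$. Replacing $(f', \bar{f}')$ by $(f', \alpha \circ \bar{f}')$ places both normal maps in $\NN_d({\C}P^n, \xi_n(\dd)\big|_{{\C}P^n}) = \Omega^{\fr}_{2n}({\C}P^n; \xi_n(\dd)\big|_{{\C}P^n})_d / \Aut(\xi_n(\dd)\big|_{{\C}P^n})$. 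A further post-composition by an automorphism of $\xi_n(\dd)\big|_{{\C}P^n}$ then yields a genuine normal bordism $(W, \bar{W})$ between $(f, \bar{f})$ and $(f', \alpha \circ \bar{f}')$ over $({\C}P^n, \xi_n(\dd)\big|_{{\C}P^n})$.

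Next, I would promote this to a bordism of normal $(n{-}1)$-smoothings over $(B_n, \xi_n(\dd) \times \gamma_{BO\an{n+1}})$. Composing with ${\C}P^n \hookrightarrow {\C}P^\infty$ extends $(W, \bar{W})$ over $({\C}P^\infty, \xi_n(\dd))$, and since the normal-map condition forces $\nu_\bullet \cong f_\bullet^*(\xi_n(\dd))$ for each of the relevant manifolds (so that the $BO\an{n+1}$-component of the universal bundle pulls back trivially), the required lift to $B_n$ can be taken to be constant in the $BO\an{n+1}$-factor. To see that the resulting maps $F, F' \colon X_n(\dd), X_n(\dd') \to B_n$ are $n$-equivalences, observe that the degree condition $\langle f_*[X_n(\dd)], x^n\rangle = d$ forces $f^*(x) = \pm x$ (and likewise for $f'$); Lefschetz identifies $H^{\leq n}(X_n(\dd); \Z)$ with $H^{\leq n}({\C}P^n; \Z)$, so $f$ itself is an $n$-equivalence, and $B_n \to {\C}P^\infty$ is an $n$-equivalence because $BO\an{n+1}$ is $n$-connected.

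Finally, since $\chi(X_n(\dd)) = \chi(X_n(\dd'))$ by hypothesis, Proposition~\ref{prop:classification_general} yields the diffeomorphism $X_n(\dd) \cong X_n(\dd')$ whenever $\dd \neq \{1\}, \{2\}, \{2, 2\}$. For each of these three exceptional multidegrees, the equality $d = d'$ of total degrees together with the matching Euler characteristics forces $\dd' = \dd$ (up to the convention of adding or removing $1$s), as discussed in Remark~\ref{rem:cl-gen}, so the conclusion holds trivially. The main obstacle is the bookkeeping in the second step: one must verify carefully that the degree-$d$ normal-map structure over ${\C}P^n$ lifts correctly to a normal $(n{-}1)$-smoothing over $B_n$, and that the bordism $(W, \bar W)$ over ${\C}P^n$ extends simultaneously over $B_n$ with these two lifts as boundary conditions --- both of which ultimately reduce to the canonical identification $\nu_\bullet \cong f_\bullet^*(\xi_n(\dd))$ provided by the normal-map data together with the $n$-connectedness of $BO\an{n+1}$.
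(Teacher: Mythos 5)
Your proposal follows essentially the same route as the paper's proof: unpack the equality in $\NN_d(\C P^n)$ into a bundle isomorphism $\alpha$ plus a normal bordism over $(\C P^n, \xi_n(\dd)\big|_{\C P^n})$, push this forward through $\Omega^{\fr}_{2n}(\C P^n;\xi)_d \to \Omega^{\fr}_{2n}(\C P^\infty;\xi_n(\dd)) \to \Omega^{O\an{n}}_{2n}(\C P^\infty;\xi_n(\dd))$ to obtain bordant normal $(n{-}1)$-smoothings over $(B_n, \xi_n(\dd)\times\gamma_{BO\an{n+1}})$, and apply Proposition~\ref{prop:classification_general}, handling $\dd=\{1\},\{2\},\{2,2\}$ separately via Remark~\ref{rem:cl-gen}. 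The argument is correct; your extra checks (that the degree condition forces $f^*(x)=\pm x$ and hence an $n$-equivalence, and that the $BO\an{n+1}$-component of the lift is constant) are details the paper leaves implicit.
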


\begin{proof}
Let $\xi = \xi_n(\dd)\big| _{\C P^n}$ and recall (see Notation \ref{not:bordism}) that $[g, \bar{g}]_\xi \in \Omega^{\fr}_{2n}({\C}P^n; \xi)_d$ denotes the element represented by a degree-$d$ normal map $(g, \bar{g})$ and the image of $[g, \bar g]_\xi$ in $\NN_d(\C P^n)$ is $[g, \bar g]$. By definition, the condition $[f, \bar{f}] = [f', \bar{f}']$ means that there is an isomorphism $\alpha \colon \xi_n(\dd') \big| _{{\C}P^n} \rightarrow \xi_n(\dd) \big| _{{\C}P^n} = \xi$ (which in particular implies that $SD_n(\dd) = SD_n(\dd')$)
such that
\[ 
[f, \bar f]_\xi = [f', \alpha \circ \bar f']_\xi \in \Omega^{\fr}_{2n}(\C P^n; \xi)_d \text{\,.}
\]
Now consider the composition
\[
\Omega^{\fr}_{2n}(\C P^n; \xi_n(\dd) \big| _{{\C}P^n})_d \rightarrow 
\Omega^{\fr}_{2n}(\C P^n; \xi_n(\dd) \big| _{{\C}P^n}) \to 
\Omega^{\fr}_{2n}(\C P^\infty; \xi_n(\dd)) \to 
\Omega^{O\an{n}}_{2n}(\C P^{\infty}; \xi_n(\dd)) \text{\,.}
\] 
We see that $X_n(\dd)$ and $X_n(\dd')$ admit bordant normal $(n{-}1)$-smoothings over $(B_n; \xi_n(\dd) \times \gamma_{BO\an{n{+}1}})$ and if $\dd \neq \{1\}, \{2\}$ or $\{2, 2\}$, then the lemma follows from Proposition \ref{prop:classification_general}. If $\dd = \{1\}, \{2\}$ or $\{2, 2\}$, then $SD_n(\dd) = SD_n(\dd')$ implies that $\dd' = \dd$. 
\end{proof}

\begin{thm} \label{thm:eta_of_X}
There is a bundle map $\bar f_n(\dd) \colon \nu_{X_n(\dd)} \rightarrow \xi_n(\dd) \big| _{\C P^n}$ over $f_n(\dd)$ such that  
\[
\eta([f_n(\dd), \bar f_n(\dd)]) = \eta_n(\dd) \in [\C P^n, (\qsnmo)_d] \text{\,.}
\]
(For the definitions of $\eta$ and $\eta_n(\dd)$ see Definitions \ref{def:eta1} and \ref{def:eta2}.)
\end{thm}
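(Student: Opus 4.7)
The plan is to construct $\bar f_n(\dd)$ by applying Lemma \ref{lem:div} to the section of $\bigoplus_i \gamma^{d_i}$ defined by the polynomials cutting out $X_n(\dd)$, and then to identify the induced fibrewise degree-$d$ map on the boundary sphere bundle with the fibrewise join of the power maps $t_{d_i}(\gamma|_{\C P^n})$. To set up the geometric situation, I would fix a representative of $X_n(\dd)$ cut out by homogeneous polynomials $f_1, \ldots, f_k$ and choose a linear $\C P^{k-1} \subset \C P^{n+k}$ complementary to a chosen $\C P^n$ and disjoint from $X_n(\dd)$. The standard identification $\C P^{n+k} \setminus \C P^{k-1} \cong E\zeta$, for $\zeta := k\gamma|_{\C P^n}$, makes the linear projection onto $\C P^n$ into the bundle projection $\pi \colon E\zeta \to \C P^n$; restricted to $X_n(\dd)$ this is a degree-$d$ map (by B\'ezout) homotopic (by cellular approximation) to $f_n(\dd)$.

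Setting $\theta := \bigoplus_i \gamma^{d_i}|_{\C P^n}$, the canonical retraction $E\zeta \to \C P^n$ gives a bundle isomorphism $\bigl(\bigoplus_i \gamma^{d_i}\bigr)|_{E\zeta} \cong \pi^*\theta$, under which $(f_1, \ldots, f_k)$ determines a section $s$ of $\pi^*\theta$ over $E\zeta$. Choosing the radius of $D\zeta$ large enough that $X_n(\dd) \subset \interior D\zeta$, the section $s|_{D\zeta}$ is transverse to the zero-section with $Z(s) = X_n(\dd)$ and is nowhere zero on $S\zeta = \partial D\zeta$; via Definition \ref{def:bij}\,(a), $s|_{S\zeta}$ corresponds to a fibrewise degree-$d$ map $g \colon S\zeta \to S\theta$. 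Lemma \ref{lem:div} then supplies the required bundle map $\bar f_n(\dd) \colon \nu_{X_n(\dd)} \to \theta \oplus \nu_{\C P^n} \ominus \zeta$ with $T([g]) = [f_n(\dd), \bar f_n(\dd)]$; since $\nu_{\C P^n} \cong -(n{+}1)\gamma|_{\C P^n}$ stably, the target bundle is $-(n{+}k{+}1)\gamma|_{\C P^n} \oplus \theta = \xi_n(\dd)|_{\C P^n}$, as needed. Thus $\eta([f_n(\dd), \bar f_n(\dd)]) = \Br([g])$, and it remains to show $[g] = [t_{d_1}(\gamma|_{\C P^n}) \ast \ldots \ast t_{d_k}(\gamma|_{\C P^n})]$ in $\FF_d(\C P^n)$.

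By Thom's theorem the normal bordism class just constructed does not depend on the choice of $(f_1, \ldots, f_k)$, so I would specialise to $f_i := w_i^{d_i} + \epsilon_i(z,w)$, with $(z_0, \ldots, z_n, w_1, \ldots, w_k)$ the coordinates on $\C^{n+k+1}$ adapted to the chosen splitting and $\epsilon_i$ a small perturbation of strictly lower degree in $w$, chosen to guarantee transversality (possible by Bertini). With this choice the leading part of $s$ on each fibre of $\zeta$, in the natural fibre coordinates, is literally $(w_1, \ldots, w_k) \mapsto (w_1^{d_1}, \ldots, w_k^{d_k})$, and a radial rescaling provides an explicit fibrewise homotopy on $S\zeta$ from $g$ to this power map. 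The main obstacle is verifying that this identification is compatible as $\ell \in \C P^n$ varies and that the result really equals the fibrewise join $t_{d_1}(\gamma|_{\C P^n}) \ast \ldots \ast t_{d_k}(\gamma|_{\C P^n})$ in $\FF_d(\C P^n)$; this hinges on the fact that the $d_i$-th power map $\gamma|_\ell \to \gamma^{d_i}|_\ell$ intertwines the chosen trivialisations via $v \mapsto v^{d_i}$, which is precisely the homogeneity built into the definition of $\gamma^{d_i}$ as the $d_i$-fold tensor power of $\gamma$.
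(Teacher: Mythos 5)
Your proposal follows essentially the same route as the paper: realise $X_n(\dd)$ as a relative divisor of $\gamma^{\dd}$ over the disc bundle $D(k\gamma\big|_{\C P^n})$ (identified with a tubular neighbourhood of $\C P^n$ in $\C P^{n+k}$), apply Lemma \ref{lem:div}, and identify the resulting boundary sphere-bundle map with the fibrewise join of the tensor-power maps by taking the defining polynomials to be small perturbations of $x_{n+i}^{d_i}$. The compatibility check you flag as the main obstacle is exactly what the paper's Lemma \ref{lem:poly2} carries out, so the argument is correct and matches the paper's proof.
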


An immediate consequence of Theorem \ref{thm:eta_of_X}, the fact that $\eta$ is a bijection and Lemma \ref{lem:SC_via_cni0} is the following

\begin{thm} \label{thm:SC_via_cni}
Let $X_n(\dd)$ and $X_n(\dd')$ be complete intersections with the same total degree $d$ and the same Euler characteristic. If $n \geq 3$ and $\eta_n(\dd) = \eta_n(\dd') \in [\C P^n, (\qsnmo)_d]$, then $X_n(\dd)$ and $X_n(\dd')$ are diffeomorphic.
\hfill \qed
\end{thm}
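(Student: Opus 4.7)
The plan is to realize $X_n(\dd)$ explicitly as a relative divisor for the universal fibrewise join map defining $\eta_n(\dd)$, so that the theorem follows by direct application of Lemma \ref{lem:div}. The key geometric observation is that by choosing the defining polynomials carefully, $X_n(\dd)$ can be arranged to sit entirely inside a tubular neighborhood of a linearly embedded $\C P^n \subset \C P^{n+k}$.

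Concretely, I take defining polynomials of the form $f_i(y, z) := z_i^{d_i} + \epsilon h_i(y, z)$ for $i = 1, \ldots, k$, where each $h_i$ is a generic homogeneous polynomial of degree $d_i$ and $\epsilon > 0$ is a small real parameter. For $\epsilon$ sufficiently small (by compactness together with the implicit function theorem applied near the non-transverse common zero locus $\C P^n$ of $z_1^{d_1}, \ldots, z_k^{d_k}$), the resulting complete intersection $X_n(\dd)$ is smooth, transverse and contained in a fixed tubular neighborhood $U$ of $\C P^n = \{z_1 = \ldots = z_k = 0\} \subset \C P^{n+k}$. Since the normal bundle of $\C P^n$ in $\C P^{n+k}$ is $k\gamma|_{\C P^n}$, the exponential map identifies $U$ with the open disc bundle $D(k\gamma|_{\C P^n})$; write $\pi \colon U \to \C P^n$ for the projection. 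Under this identification, $s := (f_1|_U, \ldots, f_k|_U)$ is a section of $\pi^*(\gamma^{d_1} \oplus \ldots \oplus \gamma^{d_k})$, transverse to the zero-section, with zero locus exactly $X_n(\dd)$.

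On the boundary $S(k\gamma|_{\C P^n})$, the section $s$ is an $O(\epsilon)$-perturbation of the map $(w_1, \ldots, w_k) \mapsto (w_1^{d_1}, \ldots, w_k^{d_k})$, which is bounded away from zero on this compact sphere bundle; hence for small $\epsilon$ the perturbed boundary section is nowhere zero, and the straight-line homotopy connects it through nowhere-zero sections to $s_g$, where $g := t_{d_1}(\gamma|_{\C P^n}) \ast \ldots \ast t_{d_k}(\gamma|_{\C P^n})$ is the fibrewise join from Definition \ref{def:eta2} (after the radial normalization sending the unit sphere bundle of $\zeta = k\gamma|_{\C P^n}$ to the unit sphere bundle of $\theta = \gamma^{d_1} \oplus \ldots \oplus \gamma^{d_k}$). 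Set $f_n(\dd) := \pi|_{X_n(\dd)}$: this is a degree-$d$ map, and is homotopic to the map from the theorem statement because maps from a $2n$-dimensional CW complex to $\C P^n$ are classified up to homotopy by the pullback of $x \in H^2(\C P^n)$ (using that $\pi_i(\C P^n) = 0$ for $3 \le i \le 2n$). The bundle map $\bar f_n(\dd)$ is then the one supplied by Lemma \ref{lem:div}, whose target computes as $\theta \oplus \nu_{\C P^n} \ominus \zeta = \gamma^{d_1} \oplus \ldots \oplus \gamma^{d_k} \oplus -(n+1)\gamma \ominus k\gamma = \xi_n(\dd)|_{\C P^n}$.

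By the remark following the definition of a relative divisor, the normal bordism class $[f_n(\dd), \bar f_n(\dd)] \in \NN_d(\C P^n)$ depends only on the homotopy class of $s|_{\partial U}$ as a nowhere-zero section, and therefore equals $T([g])$; combining Definitions \ref{def:eta1} and \ref{def:eta2} yields $\eta([f_n(\dd), \bar f_n(\dd)]) = \Br([g]) = \eta_n(\dd)$. The main obstacle will be the geometric step of the second paragraph: rigorously verifying that for small $\epsilon$ the complete intersection $X_n(\dd)$ lies in a prescribed tubular neighborhood of $\C P^n$ and that the resulting boundary section is nowhere-zero homotopic to $s_g$, with due attention to the normalization conventions implicit in the definition of the fibrewise join. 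Once this geometric setup is established, the rest of the argument is a formal application of Lemma \ref{lem:div}.
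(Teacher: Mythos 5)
Your geometric construction is sound and in fact reproduces, almost step for step, the paper's own proof of Theorem \ref{thm:eta_of_X}: perturbing the singular system $z_i^{d_i}$ to realise $X_n(\dd)$ as a relative divisor inside $D(k\gamma\big|_{\C P^n})$, identifying the boundary section with $s_g$ for $g = t_{d_1}(\gamma\big|_{\C P^n}) \ast \ldots \ast t_{d_k}(\gamma\big|_{\C P^n})$ up to nowhere-zero homotopy, and applying Lemma \ref{lem:div} to conclude $\eta([f_n(\dd), \bar f_n(\dd)]) = \eta_n(\dd)$. But that equality is the statement of Theorem \ref{thm:eta_of_X}, not of Theorem \ref{thm:SC_via_cni}, and your argument stops exactly there. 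Nothing in what you have written explains how the hypothesis $\eta_n(\dd) = \eta_n(\dd')$ produces a \emph{diffeomorphism} $X_n(\dd) \cong X_n(\dd')$; Lemma \ref{lem:div} alone cannot do this, since it only identifies a normal bordism class.

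The missing step is the surgery-theoretic half of the argument, namely Lemma \ref{lem:SC_via_cni0}. One must first use the bijectivity of $\eta$ to convert $\eta_n(\dd) = \eta_n(\dd')$ into $[f_n(\dd), \bar f_n(\dd)] = [f_n(\dd'), \bar f_n(\dd')] \in \NN_d(\C P^n)$; unwinding the definition of $\NN_d$, this yields a bundle isomorphism $\xi_n(\dd')\big|_{\C P^n} \cong \xi_n(\dd)\big|_{\C P^n}$ and a bordism of the two normal maps over $(\C P^n, \xi_n(\dd)\big|_{\C P^n})$, which one then pushes forward to bordant normal $(n{-}1)$-smoothings over $(B_n, \xi_n(\dd) \times \gamma_{BO\an{n{+}1}})$ and feeds into Kreck's modified surgery via Proposition \ref{prop:classification_general} (treating the exceptional multidegrees $\{1\}, \{2\}, \{2,2\}$ separately). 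This is precisely where the hypothesis $\chi(X_n(\dd)) = \chi(X_n(\dd'))$ enters --- and the fact that your proof never uses the Euler-characteristic assumption is the clearest symptom that the concluding step is absent. As written, you have a proof of Theorem \ref{thm:eta_of_X} rather than of Theorem \ref{thm:SC_via_cni}.
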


\begin{proof}[Proof of Theorem \ref{thm:eta_of_X}]
Let $f^0 \colon D(k\gamma \big| _{\C P^n}) \rightarrow D(\gamma^{\dd} \big| _{\C P^n})$ denote the Whitney sum of the tensor power maps $D(\gamma \big| _{\C P^n}) \rightarrow D(\gamma^{d_i} \big| _{\C P^n})$ and let $g^0 \colon S(k\gamma \big| _{\C P^n}) \rightarrow S(\gamma^{\dd} \big| _{\C P^n})$ be its restriction to the sphere bundle. Hence, in the notation of Definition \ref{def:eta2}, $g^0 = t_{d_1}(\gamma \big|_{\C P^n}) \ast \ldots \ast t_{d_r}(\gamma \big|_{\C P^n})$, so $\Br([g^0]) = \eta_n(\dd)$ and we must prove the following: There is a map of stable vector bundles $\bar f_n(\dd) \colon \nu_{X_n(\dd)} \rightarrow \xi_n(\dd) \big| _{\C P^n}$ over $f_n(\dd)$ such that  
\[ 
T([g^0]) = [f_n(\dd), \bar f_n(\dd)] \in \NN_d(\C P^n) \text{\,.}
\]

First we describe a way of constructing a representative of the complete intersection $X_n(\dd)$ in an arbitrarily small neighbourhood of the subspace $\C P^n \subset \C P^{n+k}$. Let $[x_0, x_1, \ldots , x_{n+k}]$ be homogeneous coordinates on the ambient $\C P^{n+k}$. 
For $i = 1, 2, \ldots , k$, define $p_i^0 \in \C[x_0, x_1, \ldots , x_{n+k}]$ by $p_i^0(\ul x) = x_{n+i}^{d_i}$, where $\ul x = (x_0, x_1, \ldots , x_{n+k})$. Then 
\[
\bigl\{ [\ul x] \in \C P^{n+k} \mid p_1^0(\ul x) = p_2^0(\ul x) = \ldots = p_k^0(\ul x) = 0 \bigr\} 
= 
\bigl\{ [\ul x] \in \C P^{n+k} \mid x_{n+1} = x_{n+2} = \ldots = x_{n+k} = 0 \bigr\} = \C P^n \text{\,.}
\]
Note that if $d_i > 1$, then $p_i^0$ is singular at its zeros, so $\C P^n$ is not a representative of $X_n(\dd)$ unless $\dd = \{ 1, 1, \ldots , 1 \}$. However, by applying an arbitrarily small perturbation to the $p_i^0$ we can obtain new polynomials $p_i$ such that
\[
\bigl\{ [\ul x] \in \C P^{n+k} \mid p_1(\ul x) = p_2(\ul x) = \ldots = p_k(\ul x) = 0 \bigr\} = X_n(\dd)
\]
is a complete intersection and it is contained in the interior of a closed tubular neighbourhood $U$ of $\C P^n$ (we will fix a $U$ in Lemma \ref{lem:poly2} below). 

By Construction \ref{const:poly1} the polynomials $p_i^0$ and $p_i$ define sections of $\gamma^{d_i} \big| _{\C P^{n+k}}$. Therefore the tuples $(p_1^0, p_2^0, \ldots , p_k^0)$ and $(p_1, p_2, \ldots , p_k)$ define some sections $s^0$ and $s$ of $\gamma^{\dd} \big| _{\C P^{n+k}}$ (so the zero sets of $s^0$ and $s$ are $\C P^n$ and $X_n(\dd)$ respectively). Then we can assume that there is a homotopy $\C P^{n+k} \times I \rightarrow \gamma^{\dd} \big| _{\C P^{n+k}}$ of sections between $s^0$ and $s$ that is non-zero on $(\C P^{n+k} \setminus \interior U) \times I$. In particular, the restrictions of $s^0$ and $s$ are homotopic as non-zero sections over $\partial U$.

The normal bundle of $\C P^n$ in $\C P^{n+k}$ is $k\gamma \big| _{\C P^n}$, so $U$ can be identified with $D(k\gamma \big| _{\C P^n})$. Moreover, the projection $\pi_U \colon U \rightarrow \C P^n$ of $U$ is a deformation retraction, hence the bundle $\pi_U^* \bigl( \gamma^{\dd} \big| _{\C P^n} \bigr)$ is isomorphic to $\gamma^{\dd} \big| _U$. We will fix an identification and an isomorphism in Lemma \ref{lem:poly2}. With these identifications, the sections $s^0 \big| _U$ and $s \big| _U$ correspond to fibre-preserving maps $D(k\gamma \big| _{\C P^n}) \rightarrow D(\gamma^{\dd} \big| _{\C P^n})$ under the bijection of Definition \ref{def:bij}. Let $f \colon D(k\gamma \big| _{\C P^n}) \rightarrow D(\gamma^{\dd} \big| _{\C P^n})$ be the map such that $s_f = s \big| _U$. In Lemma \ref{lem:poly2} we prove that $s_{f^0} = s^0 \big| _U$. Let $g \colon S(k\gamma \big| _{\C P^n}) \rightarrow S(\gamma^{\dd} \big| _{\C P^n})$ be the restriction of $f$ (we can assume that it has values in the sphere bundle, because $f \big| _{S(k\gamma | _{\C P^n})}$ is nowhere zero, because $s \big| _{\partial U}$ is nowhere zero), then $s_g = s \big| _{\partial U}$. The restriction of $f^0$ is $g^0$, so $s_{g^0} = s^0 \big| _{\partial U}$. Since $s_{g^0} = s^0 \big| _{\partial U}$ and $s_g = s \big| _{\partial U}$ are homotopic as non-zero sections, $g^0$ and $g$ are fibre homotopic. The bijection $T$ is well-defined on fibre homotopy classes, so $T([g^0]) = T([g])$.

By construction $X_n(\dd) = Z(s)$ is a divisor of $\gamma^{\dd} \big| _{\C P^{n+k}}$ relative to $s \big| _{\partial U} = s_g$. Since $\pi_U$ is homotopic to the identity, we have $f_n(\dd) = \pi_U \big| _{X_n(\dd)}$ (up to homotopy). By Lemma \ref{lem:div} there is a bundle map $\bar f_n(\dd) \colon \nu_{X_n(\dd)} \rightarrow \gamma^{\dd} \oplus -(n{+}1)\gamma \ominus k\gamma \big| _{\C P^n} \cong \xi_n(\dd) \big| _{\C P^n}$ such that $T([g]) = [f_n(\dd), \bar f_n(\dd)] \in \NN_d(\C P^n)$. Therefore $T([g^0]) = [f_n(\dd), \bar f_n(\dd)]$. 
\end{proof}

\begin{remark}
There is a canonical bundle map $\nu_{X_n(\dd)} \rightarrow \xi_n(\dd) \big| _{{\C}P^{n+k}}$ over $i$, and hence over $f_n(\dd)$ (cf.\  Proposition \ref{prop:normal-b}), because there is a canonical isomorphism $\nu_{X_n(\dd)} \cong \nu(X_n(\dd) \rightarrow {\C}P^{n+k}) \oplus \nu_{{\C}P^{n+k}}$ and the normal bundle of a degree-$r$ hypersurface in ${\C}P^{n+k}$ is canonically isomorphic to the restriction of $\gamma^r$ (see Construction \ref{const:poly1} and Remark \ref{rem:divisor}). By following the definitions, we can see that the bundle map $\bar{f}_n(\dd)$ constructed in the proof of Theorem \ref{thm:eta_of_X} is equal to this canonical map (up to homotopy). 
\end{remark}

We used the following (well known) construction and the lemma below.

\begin{const} \label{const:poly1}
A homogeneous polynomial $q$ of degree $r$ in variables $x_0, x_1, \ldots , x_m$ determines a section of the bundle $\gamma^r \big| _{\C P^m}$ as follows.

If $r=1$, then the assignment $[x_0, x_1, \ldots , x_m] \mapsto [x_0, x_1, \ldots , x_m, q(x_0, x_1, \ldots , x_m)]$ is a well-defined map $\C P^m \rightarrow \C P^{m+1} \setminus [0, 0, \ldots , 0, 1]$. Since the map $\C P^{m+1} \setminus [0, 0, \ldots , 0, 1] \rightarrow \C P^m$, $[x_0, x_1, \ldots , x_{m+1}] \mapsto [x_0, x_1, \ldots , x_m]$ can be identified with the projection of the normal bundle of $\C P^m$ in $\C P^{m+1}$, which is isomorphic to $\gamma \big| _{\C P^m}$, we get that $q$ determines a section of $\gamma \big| _{\C P^m}$. So every linear monomial $x_i$ determines a section of $\gamma \big| _{\C P^m}$. If we have sections $s_1, s_2, \ldots , s_r$ of some vector bundle $\xi$, then their symmetric product $s_1s_2 \ldots s_r$ is a section of the symmetric power $\Sym^r(\xi)$ and if $\xi$ is a line bundle, then $\Sym^r(\xi) = \xi^r$. Therefore every degree-$r$ monomial and hence every degree-$r$ homogeneous polynomial determines a section of $\gamma^r \big| _{\C P^m}$.
\end{const}

\begin{lem} \label{lem:poly2}
We can identify $D(k\gamma \big| _{\C P^n})$ with a tubular neighbourhood $U$ of $\C P^n$ in $\C P^{n+k}$ and the bundle $\pi_U^* \bigl( \gamma^{\dd} \big| _{\C P^n} \bigr)$ with $\gamma^{\dd} \big| _U$ such that after these identifications the section $s_{f^0}$ corresponding to $f^0$ under the bijection of Definition \ref{def:bij}\,(b) is equal to $s^0 \big| _U$.
\end{lem}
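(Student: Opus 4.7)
The plan is to make the tubular neighbourhood identification completely explicit via the linear coordinate projection from $\C P^{n+k}$ to $\C P^n$, and then observe that under this identification the linear sections $x_{n+i}$ become precisely the tautological sections of the summands of the normal bundle, so that the tensor powers appearing in $s^0$ exactly match the map $f^0$.

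First, I would identify $\C P^{n+k} \setminus \C P^{k-1}$, where $\C P^{k-1} = \{[\ul x] \mid x_0 = \ldots = x_n = 0\}$, with the total space of $k\gamma|_{\C P^n}$ as follows. The map $\pi \colon [x_0, \ldots, x_{n+k}] \mapsto [x_0, \ldots, x_n]$ is a well-defined retraction onto $\C P^n$ on this open subset. Over $L = [\ul a] \in \C P^n$, the fibre $\pi^{-1}(L)$ consists of the points $[a_0, \ldots, a_n, w_1, \ldots, w_k]$ with $(w_1, \ldots, w_k) \in \C^k$. Rescaling the representative $\ul a$ by $\mu \in \C^\times$ forces the simultaneous rescaling $w_i \mapsto \mu w_i$, so this fibre is canonically $L^\ast \otimes \C^k = k(\gamma|_L)$ (using $\gamma|_L = L^\ast$). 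This identifies $\pi$ with the projection of $k\gamma|_{\C P^n}$, and a suitable (small) tubular neighbourhood $U$ of $\C P^n$ then corresponds to $D(k\gamma|_{\C P^n})$.

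Second, I would interpret the linear section $x_{n+i}$ of $\gamma|_{\C P^{n+k}}$ in this picture. At a point $u \in U$ with normal coordinates $(v_1, \ldots, v_k) \in k(\gamma|_L)$ over $L \in \C P^n$ and any representative $\ul a$ of $L$, the homogeneous coordinates of $u$ are $(\ul a, v_1(\ul a), \ldots, v_k(\ul a))$, so $x_{n+i}$ evaluates to $v_i(\ul a)$. Because $\pi_U$ is a deformation retraction, pullback along $\pi_U$ yields a canonical bundle isomorphism $\gamma|_U \cong \pi_U^\ast(\gamma|_{\C P^n})$; under this isomorphism, the calculation above shows that $x_{n+i}|_U$ corresponds to the section of $\pi_U^\ast(\gamma|_{\C P^n})$ sending $u = (L, (v_1, \ldots, v_k))$ to $v_i \in \gamma|_L$, i.e.\ the tautological section of the $i$-th summand of $k\gamma|_{\C P^n}$. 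Taking the $d_i$-fold tensor product and assembling summands, $p_i^0|_U = x_{n+i}^{d_i}|_U$ corresponds to the section $u \mapsto v_i^{d_i} \in \gamma^{d_i}|_L$, so the bundle isomorphism $\pi_U^\ast(\gamma^{\dd}|_{\C P^n}) \cong \gamma^{\dd}|_U$ thus specified is the one the statement requires.

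Finally, tracing Definition \ref{def:bij}\,(b), the section $s_{f^0}$ of $\pi_U^\ast(\gamma^{\dd}|_{\C P^n})$ is $u \mapsto (u, f^0(v_1, \ldots, v_k)) = (u, (v_1^{d_1}, \ldots, v_k^{d_k}))$, since $f^0$ is the Whitney sum of the complex tensor-power maps $v_i \mapsto v_i^{d_i}$. Comparing with the previous paragraph yields $s^0|_U = s_{f^0}$ on the nose, completing the argument. The main obstacle is the bookkeeping needed to verify that the ``natural'' identifications (the explicit bundle chart on $k\gamma|_{\C P^n}$ coming from affine coordinates on $\C P^{n+k}$, and the deformation-retraction isomorphism $\gamma|_U \cong \pi_U^\ast(\gamma|_{\C P^n})$) are mutually consistent and respect the tensor-product structures, so that the scalar ambiguity $\mu \in \C^\times$ arising from rescaling $\ul a$ cancels at every step.
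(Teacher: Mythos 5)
Your proposal is correct and follows essentially the same route as the paper: identify the tubular neighbourhood with $E(k\gamma|_{\C P^n})$ via $([\ul a],(v_1,\dots,v_k))\mapsto[\ul a, v_1(\ul a),\dots,v_k(\ul a)]$, fix the bundle isomorphism $\pi_U^*(\gamma^{\dd}|_{\C P^n})\cong\gamma^{\dd}|_U$ induced by the coordinate projection, and verify the key identity $x_{n+i}^{d_i}=v_i(\ul a)^{d_i}=p_i^0$ pointwise (the paper phrases the fibres via homogeneous polynomials $([\ul a],q)$ where you use linear functionals on the line $L$, but these are the same identifications). The only presentational caution is that the isomorphism $\gamma|_U\cong\pi_U^*(\gamma|_{\C P^n})$ is not canonical merely from $\pi_U$ being a deformation retraction --- it must be the specific one induced by the linear projection, as your subsequent coordinate computation (and the paper's explicit formula) in fact uses.
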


\begin{proof}
First we will introduce ``coordinates" on the total space of $\gamma^r \big| _{\C P^m}$, then we will define $U$ and describe the necessary identifications, then we will show that $s_{f^0}$ (regarded as a section of $\gamma^{\dd} \big| _U$) is equal to $s^0 \big| _U$.

By Construction \ref{const:poly1} a pair $([\ul a], q)$ (where $[\ul a] = [a_0, a_1 , \ldots a_m] \in \C P^m$ and $q$ is a homogeneous polynomial of degree $r$ in variables $x_0, x_1, \ldots , x_m$) determines a point in $E(\gamma^r \big| _{\C P^m})$ (namely, the value of the section determined by $q$ over the point $[\ul a]$). Every point in $E(\gamma^r \big| _{\C P^m})$ can be described by such a pair and two pairs, $([\ul a], q)$ and $([\ul a], q')$, determine the same point if and only if $q(\ul a) = q'(\ul a)$. Similarly, if $q_i$ is a homogeneous polynomial of degree $d_i$, then a pair $([\ul a], (q_1, q_2, \ldots , q_k))$ determines a point in $E(\gamma^{\dd} \big| _{\C P^m})$. 

To simplify notation we will use the abbreviations $\ul a = (a_0, a_1, \ldots , a_n) \in \C^{n+1} \setminus \{ 0 \}$, $\ul b = (b_1, b_2, \ldots , b_k) \in \C^k$ and $\ul c = (c_0, c_1, \ldots , c_{n+k}) \in \C^{n+k+1} \setminus \{ 0 \}$. Also, $q_i$ and $r_i$ will always denote some homogeneous polynomials in variables $x_0, x_1, \ldots , x_n$ such that $q_i$ has degree $d_i$ and $r_i$ is linear.

The map $([\ul a], (r_1, r_2, \ldots , r_k)) \mapsto [\ul a, r_1(\ul a), r_2(\ul a), \ldots , r_k(\ul a)]$ is a homeomorphism between $E(k\gamma \big| _{\C P^n})$ and an open tubular neighbourhood of $\C P^n$ in $\C P^{n+k}$ (which is diffeomorphic to $\C P^{n+k} \setminus \C P^{k-1}$). We define $U$ to be the image of the disc bundle $D(k\gamma \big| _{\C P^n})$ under this map. Then this map identifies $D(k\gamma \big| _{\C P^n})$ with $U$.

Points of the subspace $E \bigl( \pi_U^* \bigl( \gamma^{\dd} \big| _{\C P^n} \bigr) \bigr) \subset U \times E \bigl( \gamma^{\dd} \big| _{\C P^n} \bigr)$ are of the form $([\ul a, \ul b], ([\ul a], (q_1, q_2, \ldots , q_k)))$. The map $([\ul a, \ul b], ([\ul a], (q_1, q_2, \ldots , q_k))) \mapsto ([\ul a, \ul b], (\bar{q}_1, \bar{q}_2, \ldots , \bar{q}_k)) \in E(\gamma^{\dd} \big| _{\C P^{n+k}})$ (where $\bar{q}_i$ is equal to $q_i$, but it is regarded as a polynomial in the variables $x_0, x_1, \ldots , x_{n+k}$) is an isomorphism between the bundles $\pi_U^* \bigl( \gamma^{\dd} \big| _{\C P^n} \bigr)$ and $\gamma^{\dd} \big| _U = (\gamma^{\dd} \big| _{\C P^{n+k}}) \big| _U$. 

By definition the section $s^0$ is the map $[\ul c] \mapsto ([\ul c], (p_1^0, p_2^0, \ldots , p_k^0))$. 

The map $f^0$ is given by the formula $([\ul a], (r_1, r_2, \ldots , r_k)) \mapsto ([\ul a], (r_1^{d_1}, r_2^{d_2}, \ldots , r_k^{d_k}))$.   After identifying $D(k\gamma \big| _{\C P^n})$ with $U$ the formula becomes $[\ul a, \ul b] \mapsto ([\ul a], (r_1^{d_1}, r_2^{d_2}, \ldots , r_k^{d_k}))$, where $r_i$ is chosen such that $r_i(\ul a) = b_i$. Therefore $s_{f^0}([\ul a, \ul b]) = ([\ul a, \ul b], ([\ul a], (r_1^{d_1}, r_2^{d_2}, \ldots , r_k^{d_k})))$ and this point is identified with $([\ul a, \ul b], (\bar{r}_1^{d_1}, \bar{r}_2^{d_2}, \ldots , \bar{r}_k^{d_k}))$. We have 
\[
\bar{r}_i^{d_i}(\ul a, \ul b) = r_i^{d_i}(\ul a) = b_i^{d_i} = p_i^0(\ul a, \ul b) \text{\,,}
\]
so $([\ul a, \ul b], (\bar{r}_1^{d_1}, \bar{r}_2^{d_2}, \ldots , \bar{r}_k^{d_k})) = ([\ul a, \ul b], (p_1^0, p_2^0, \ldots , p_k^0))$, therefore $s_{f^0} = s^0 \big| _U$.
\end{proof}

We conclude this section with a discussion of the bundle data $\bar f_n(\dd)$ in the canonical
normal invariant of $X_n(\dd)$.
Although the degree-$d$ normal map $(f_n(\dd), \bar f_n(\dd)) \colon X_n(\dd) \to \C P^n$ is canonically
constructed, so far we have not been able to characterise its homotopy class amongst all such
degree-$d$ normal maps. 
In particular, if there is a diffeomorphism $h \colon X_n(\dd) \to X_n(\dd')$,
then up to homotopy it induces a unique bundle map $\bar{h} \colon \nu_{X_n(\dd)} \to \nu_{X_n(\dd')}$
covering $h$ and in general we do not know whether $\bar{f}_n(\dd') \circ \bar{h}$ and $\bar f_n(\dd)$ are homotopic stable bundle maps.  
In this paper, we shall only need to address this question when $n = 4$ and $X_4(\dd)$ is non-spin.
In this case, the problem is solved via the following 

\begin{lemma} \label{lem:bundle_automorphism}
Let $X$ be a closed, connected non-spin $8$-manifold which is homotopy equivalent to a 
$CW$-complex with only even dimensional cells,
$\xi$ a stable vector bundle over $X$
and $\bar{g} \colon \xi \to \xi$ an orientation preserving stable bundle automorphism. 
Then $\bar{g}$ is fibre homotopic to the identity. 
\end{lemma}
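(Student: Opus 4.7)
The plan is to reduce the lemma to showing $[X, SO] = 0$ and then to prove this vanishing via the Atiyah--Hirzebruch spectral sequence (AHSS) for $KO$-theory.

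First, I would identify the set of orientation-preserving stable bundle automorphisms of $\xi$ modulo homotopy with $[X, SO]$. Since $SO$ is an infinite loop space, its adjoint action on itself is null-homotopic, so the bundle $\mathrm{Aut}(\xi) \to X$ with fibre $SO$ is trivialisable as a fibre bundle; the $\pi_0$ of its section space is then $[X, SO]$, and it suffices to prove $[X, SO] = 0$.

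Next I would exploit the cell structure to compute $KO^{-1}(X) = [X, O]$ via the AHSS $E_2^{p,q} = H^p(X; KO^q(\mathrm{pt})) \Rightarrow KO^{p+q}(X)$. Because all cells of $X$ are even-dimensional, $H^{\mathrm{odd}}(X; A) = 0$ for every abelian group $A$; combined with $KO^q(\mathrm{pt}) = 0$ for $q \equiv -3, -5, -6, -7 \pmod{8}$, the only surviving $E_2$-entries of total degree $-1$ are $E_2^{0,-1} = H^0(X; \Z/2) = \Z/2$ (the orientation class, killed by the orientation-preserving hypothesis) and $E_2^{8,-9} = H^8(X; \Z/2) = \Z/2$ (the top class of $X$).

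The crucial step is to show $E_\infty^{8,-9} = 0$ by proving that the differential $d_2 \colon E_2^{6,-8} = H^6(X; \Z) \to E_2^{8,-9} = H^8(X; \Z/2)$ is surjective. In the $KO$-AHSS this differential equals $\mathrm{Sq}^2 \circ \varrho_2$, a standard formula arising from the first nontrivial $k$-invariant of the connective $ko$-spectrum. The reduction $\varrho_2$ is surjective because $H^7(X; \Z) = 0$, and by the Wu formula $\mathrm{Sq}^2 y = v_2(X) \cup y = w_2(X) \cup y$ for $y \in H^6(X; \Z/2)$. Since $X$ is a closed oriented $8$-manifold with $w_2(X) \neq 0$ by the non-spin hypothesis, Poincar\'e duality forces the pairing $w_2(X) \cup (-) \colon H^6(X; \Z/2) \to H^8(X; \Z/2)$ to be surjective; hence $d_2$ is surjective and $[X, SO] = 0$. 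The main technical input is the identification $d_2 = \mathrm{Sq}^2 \circ \varrho_2$; once it is granted, the remaining ingredients (Wu formula and Poincar\'e duality) are routine, with the non-spin hypothesis entering precisely at the final step.
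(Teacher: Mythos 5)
Your proposal is correct and follows essentially the same route as the paper: reduce to showing $[X,SO]=0$ (the paper cites \cite[Lemma I.4.6]{Br} for this reduction, you give the standard gauge-group argument) and then run the Atiyah--Hirzebruch spectral sequence, where the even-cell hypothesis leaves only the top term $H^8(X;\Z/2)$, which is killed by the $d_2$-differential $\Sym^{}$-free identification $d_2 = Sq^2\circ\varrho_2$ together with the Wu formula and the non-spin hypothesis. This matches the paper's (sketched) argument in detail, so no further comparison is needed.
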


\begin{proof}
By standard $K$-theoretic arguments (given for automorphisms of stable spherical fibrations in \cite[Lemma I.4.6]{Br1}), it is sufficient to prove that $[X, SO] = 0$. By \cite[Proposition 4C.1]{Ha}, we may assume that there is a homotopy equivalence
$X \simeq K \cup_f D^8$, where $K$ is a $6$-dimensional $CW$-complex with only even-dimensional cells 
and $f \colon S^7 \to K$ attaches a single $8$-cell.
Consider the Puppe sequence of the cofibration 
$K \to K \cup_f D^8 \to S^8$:
\[ [\Sigma K, SO] \xra{} \pi_8(SO) \to [K \cup_f D^8, SO] \xra{} [K, SO]. \]
Obstruction theory \cite[Corollary 4.73]{Ha} gives that $[K, SO] = 0$, since $\pi_{2i}(SO) = 0$ for $0 \leq 2i \leq 6$. So to prove that $[X, SO] \cong [K \cup_f D^8, SO]$ is trivial, it is enough to show that the map $[\Sigma K, SO] \to \pi_8(SO)$ is surjective.

The map $[\Sigma K, SO] \to \pi_8(SO)$ sends a homotopy class $[g] \in [\Sigma K, SO]$ to $[g \circ \Sigma f]$, where $\Sigma f \colon S^8 \to \Sigma K$ is the suspension of $f$ \cite[6.18 Ch.\ III]{Wh}. Since $K \cup_f D^8$ has no odd-dimensional cells, $H^1(X; \Z/2) = 0$, and so $X$ is orientable. Since $X$ is non-spin, $v_2(X) = w_2(X) \neq 0$ by \cite[Theorem 11.15]{M-S}, and so $Sq^2 \colon H^6(X; \Z/2) \to H^8(X; \Z/2)$ is non-zero. 
Let $K^{(4)} \subset K$ denote the $4$-skeleton of $K$, and let $c \colon K \to K/K^{(4)}$ be the collapse map, then $K/K^{(4)} = \vee_{i=1}^b S^6$ is a wedge of $6$-spheres. Since $H^6(S^7) \cong H^5(S^7) \cong H^8( \vee_{i=1}^b S^6) \cong H^7( \vee_{i=1}^b S^6) \cong 0$, we deduce that the functional Steenrod Square of $Sq^2$ applied to $c \circ f \colon S^7 \to \vee_{i=1}^b S^6$ is unambiguously defined and non-zero on $H^6(K/K^{(4)}; \Z/2)$; c.f.\ \cite[Ch.\ 16]{M-T}. Since $Sq^2$ is a stable operation, the functional Steenrod Square of $Sq^2$ applied to the suspension $\Sigma c \circ \Sigma f \colon S^8 \to \vee_{i=1}^b S^7$ is also non-zero on $H^7(\vee_{i=1}^b S^7; \Z/2)$, showing that $\Sigma c \circ \Sigma f$ is essential (see \cite[Ch.\ 16, Proposition 1]{M-T}). By Hilton's Theorem \cite[Theorem A]{Hi} and the computation of the $1$-stem \cite[Ch.\ XIV]{T}, $\pi_8\bigl( \vee_{i=1}^b S^7 \bigr) \cong \bigoplus_{i=1}^b \pi_8(S^7) \cong (\Z/2)^b$, and it follows that $\pr_j \circ (\Sigma c \circ \Sigma f) \colon S^8 \to S^7$ is essential for some $j \in \{1, \dots, b\}$, where $\pr_j \colon \vee_{i=1}^b S^7 \to S^7$ splits off the $j^{\text{th}}$ sphere in the wedge. Using \cite[Example 12.15]{A}, we see that precomposition with $\eta_7 \colon S^8 \to S^7$ induces a surjection $\pi_7(SO) \to \pi_8(SO)$, and so the composition 
\[ 
\xymatrix{
\pi_7(SO) \ar[r]^-{\pr_j^*} & [\Sigma \bigl(K/K^{(4)}\bigr), SO] \ar[r]^-{\Sigma c^*} & [\Sigma K, SO] \ar[r]^-{\Sigma f^*} & \pi_8(SO) 
}
\]
is onto.  
It follows that $[\Sigma K, SO] \to \pi_8(SO)$ 
is onto, completing the proof.
\end{proof}

\begin{corollary} \label{cor:bundle_map}
Let $(f_0, \bar{f}_0), (f_1, \bar{f}_1) \colon 
X_4(\dd)
\rightarrow ({\C}P^4, \xi_4(\dd) \big| _{{\C}P^4})$
be a pair of normal maps from a non-spin complete intersection $X_4(\dd)$
such that $f_0^*(x) = f_1^*(x)$. Then $(f_0, \bar{f}_0)$ and $(f_1, \bar{f}_1)$ are homotopic.
\end{corollary}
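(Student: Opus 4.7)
The argument proceeds in two steps: first I would show the base maps $f_0$ and $f_1$ are homotopic as maps to $\C P^4$, and then I would identify the bundle data up to homotopy via Lemma \ref{lem:bundle_automorphism}.

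For the first step, the inclusion $j \colon \C P^4 \hookrightarrow \C P^\infty$ is the inclusion of the $8$-skeleton, so the pair $(\C P^\infty, \C P^4)$ is $9$-connected. By standard obstruction theory, $j_* \colon [X_4(\dd), \C P^4] \to [X_4(\dd), \C P^\infty]$ is therefore a bijection, since $X_4(\dd)$ has dimension $8$ and the relevant obstructions lie in groups $H^i(X_4(\dd); \pi_i(\C P^\infty, \C P^4))$ for $i \leq 9$, all of which vanish. Composing with the natural bijection $[X_4(\dd), \C P^\infty] \cong H^2(X_4(\dd))$ given by pullback of $x$, the assumption $f_0^*(x) = f_1^*(x)$ immediately yields $f_0 \simeq f_1$.

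For the second step, I would fix a homotopy $H \colon X_4(\dd) \times I \to \C P^4$ from $f_0$ to $f_1$ and use it to build a stable bundle isomorphism $f_0^*(\xi_4(\dd)\big|_{\C P^4}) \cong f_1^*(\xi_4(\dd)\big|_{\C P^4})$, canonical up to homotopy. Using this, $\bar f_1$ can be regarded as a bundle map $\bar f_1'$ covering $f_0$, and the composition $\bar f_0 \circ (\bar f_1')^{-1}$ is a stable bundle automorphism $\bar g$ of $\nu_{X_4(\dd)}$. Since $\bar f_0$ and $\bar f_1$ are orientation-preserving as part of the data of a normal map, so is $\bar g$.

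Finally, I would verify the hypotheses of Lemma \ref{lem:bundle_automorphism} with $X = X_4(\dd)$ and $\xi = \nu_{X_4(\dd)}$: $X_4(\dd)$ is a closed, connected, non-spin $8$-manifold by assumption, and by the Lefschetz hyperplane theorem it is simply connected with torsion-free cohomology concentrated in even degrees, hence is homotopy equivalent to a $CW$ complex with only even-dimensional cells. Lemma \ref{lem:bundle_automorphism} then gives $\bar g \simeq \Id$, and combining this with $H$ yields a homotopy of normal maps from $(f_0, \bar f_0)$ to $(f_1, \bar f_1)$. The real content of the argument is Lemma \ref{lem:bundle_automorphism} itself; given that, the only point requiring care is ensuring the discrepancy between $\bar f_0$ and $\bar f_1$ is realised as an orientation-preserving automorphism of a stable bundle \emph{over} $X_4(\dd)$, so that the non-spin hypothesis on $X_4(\dd)$ (and not, e.g., the spin condition on $\C P^4$, where the analogous statement would fail) applies.
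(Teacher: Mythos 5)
Your proposal is correct and follows essentially the same route as the paper: first homotope $f_0$ to $f_1$ using that maps into $\C P^4$ from an $8$-complex are classified by the pullback of $x$ (the paper phrases this as homotopy into $\C P^\infty$ plus cellular approximation), then realise the discrepancy in the bundle data as an orientation-preserving stable bundle automorphism over $X_4(\dd)$ and kill it with Lemma \ref{lem:bundle_automorphism}. The only quibble is notational: the composite $\bar f_0 \circ (\bar f_1')^{-1}$ is an automorphism of $f_0^*(\xi_4(\dd)\big|_{\C P^4})$ rather than of $\nu_{X_4(\dd)}$ (the other order gives the latter), but since the lemma applies to any stable bundle over $X_4(\dd)$ this is immaterial.
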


\begin{proof}
It follows from the assumption $f_0^*(x) = f_1^*(x)$ that $f_0$ and $f_1$ are homotopic as maps into 
${\C}P^{\infty} \simeq K(\Z,2)$. By cellular approximation they are also homotopic as maps into ${\C}P^4$, so we may assume that $f_0=f_1$. Then the bundle maps $\bar{f}_0, \bar{f}_1 \colon \nu_{X_4(\dd)} \to \xi_4(\dd) \big| _{{\C}P^4}$ differ by pre-composition with a bundle automorphism $\bar{g} \colon \nu_{X_4(\dd)} \to \nu_{X_4(\dd)}$. 
By Lemma \ref{lem:bundle_automorphism}, $\bar{g}$ is homotopic to the identity 
and so $\bar{f}_0$ and $\bar{f}_1$ are homotopic.
\end{proof}

\subsection{The Sullivan Conjecture in the case of odd total degree} 
Let $X_n(\dd)$ be a complete intersection. By Theorem \ref{thm:eta_of_X} there is a degree-$d$ normal map $(f_n(\dd), \bar f_n(\dd)) \colon X_n(\dd) \to \C P^n$ such that
\[ 
\eta(f_n(\dd), \bar f_n(\dd)) = \eta_n(\dd) \in [\C P^n, (\qsnmo)_d]. 
\]
Our goal is to show that if $d$ is odd and $SD_4(\dd) = SD_4(\dd')$, then $\eta_4(\dd) = \eta_4(\dd')$. 
Then Theorem \ref{thm:SC_via_cni} allows us to deduce the Sullivan Conjecture when $n = 4$ and $d$ is odd. To compare the normal invariants $\eta_4(\dd)$ and $\eta_4(\dd')$, we will apply results of Feshbach on the Segal Conjecture, using the fact that $\eta_n(\dd)$ is the restriction of $\eta_\infty(\dd) \colon \C P^{\infty} \to (\qsnmo)_d$.

Due to a combination of Theorem \ref{thm:FK} of Fang and Klaus and Proposition \ref{prop:B-M} of Brumfiel and Madsen, localising at the prime $2$ will prove to be an effective strategy when the total degree $d$ is odd.
For a simple space $Z$ and a prime $p$ we shall write $Z_{(p)}$ (and even $(Z)_{(p)}$ where necessary) 
for the $p$-localisation of $Z$.  Similarly, we write $A_{(p)}$ for the $p$-localisation of an abelian group $A$.
If $\varphi \in [Y, Z]$ is a homotopy class of maps 
from some other space $Y$ to $Z$,
we write $\varphi_{(p)} \in [Y, Z_{(p)}]$ for the homotopy class of 
the composition $Y \xra{\varphi} Z \to Z_{(p)}$, where $Z \to Z_{(p)}$ is the natural map.
Similarly, for $Z[1/p]$, the space obtained from $Z$ by inverting $p$,
we write $\varphi_{[1/p]} \in [Y, Z[1/p]]$ for the homotopy class of 
the composition $Y \xra{\varphi} Z \to Z[1/p]$, where $Z \to Z[1/p]$ is the natural map.

\begin{lem} \label{lem:local}
Let $n$ and $d$ be positive integers.
Suppose that $\varphi, \psi \in [{\C}P^n, (\qsnmo)_d]$ are homotopy classes such that 
$\varphi_{(p)} = \psi_{(p)}$ and $\varphi_{[1/p]} = \psi_{[1/p]}$ for some prime $p$.  Then $\varphi = \psi$.
\end{lem}

\begin{proof}
We partition the set of all primes into the sets
$l := \{p\}$ and $l' := \{ q \mid q \neq p\}$.
By \cite[(4) p.\ 41]{Su}, for any simple space $Z$ the natural maps $Z \to Z[1/p]$ and $Z \to Z_{(p)}$ fit into a fibre square
\[ 
\xymatrix{
Z \ar[r] \ar[d] &
Z[1/p] \ar[d] \\
Z_{(p)} \ar[r] &
Z_\Q,
}
\]
where $Z_\Q$ denotes the rationalisation of $Z$.
Hence there is a homotopy fibration
sequence $Z \to Z_{(p)} \times Z[1/p] \to Z_\Q$,
and for $Z = (\qsnmo)_d$
we have a homotopy fibration sequence
\[
(\qsnmo)_d \xra{~\ell_p~} ((\qsnmo)_d)_{(p)} \times ((\qsnmo)_d)[1/p]  \to ((\qsnmo)_d)_{\Q}.
\]
The Puppe sequence for homotopy classes of maps from $\C P^n$ into this fibration contains the 
exact sequence
\[
[{\C}P^n, \Omega ((\qsnmo)_d)_{\Q}] \to [{\C}P^n, (\qsnmo)_d] \xra{~\ell_{p*}~} 
[{\C}P^n, ((\qsnmo)_d)_{(p)}] \times [{\C}P^n, ((\qsnmo)_d)[1/p]],
\]
where $\ell_{p*}(\varphi) = (\varphi_{(p)}, \varphi_{[1/p]})$ and
$[{\C}P^n, \Omega ((\qsnmo)_d)_{\Q}]$ acts transitively on the fibres of $\ell_{p*}$.
We will show that $[{\C}P^n, \Omega ((\qsnmo)_d)_{\Q}] = 0$, which implies that $\ell_{p*}$ is injective
and proves the lemma.

Since $QS^0_d$ is connected with finite homotopy groups (see Section \ref{ss:action}), its rationalisation is contractible. So, by the rationalisation of the fibration sequence \eqref{eq:fib-qsnmo}, 
$((\qsnmo)_d)_{\Q} \simeq (BSO)_{\Q}$. 
It is well-known that there is an equivalence $(BSO)_{\Q} \simeq \prod_{i=1}^\infty K(\Q, 4i)$ 
(see e.g.\ \cite[(12) p.\ 42-3]{Su}), 
and so we have a chain of isomorphisms 
$[{\C}P^n, \Omega ((\qsnmo)_d)_{\Q}] \cong [\Sigma {\C}P^n, ((\qsnmo)_d)_{\Q}] \cong [\Sigma {\C}P^n, (BSO)_{\Q}] \cong \bigoplus_{i=1}^\infty H^{4i}(\Sigma {\C}P^n;\Q) \cong 0$.
\end{proof}

\begin{lem} \label{lem:2local}
Let $X_4(\dd)$ and $X_4(\dd')$ be 
non-spin complete intersections such that $SD_4(\dd) = SD_4(\dd')$. 
If $\eta_4(\dd)_{(2)} = \eta_4(\dd')_{(2)} \in [{\C}P^4, ((\qsnmo)_d)_{(2)}]$, then $\eta_4(\dd) = \eta_4(\dd')$.
\end{lem}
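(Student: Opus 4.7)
Let $d$ be the common total degree of $\dd$ and $\dd'$, which is odd by the hypothesis of this section. Since $(\qsnmo)_d$ is an infinite loop space, the set $[\C P^4, (\qsnmo)_d]$ is naturally a finitely generated abelian group, and we set $\delta := \eta_4(\dd) - \eta_4(\dd')$; the hypothesis is that $\delta_{(2)} = 0$. Because for any finitely generated abelian group the natural map to the product of its prime localisations is injective, it suffices to show $\delta_{(p)} = 0$ for every odd prime $p$.

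First I would kill the $BSO$-component of $\delta$. By Theorem \ref{thm:eta_of_X}, both normal invariants arise by restriction along $i \colon \C P^4 \hookrightarrow \C P^\infty$ from the maps $\eta_\infty(\dd), \eta_\infty(\dd') \colon \C P^\infty \to (\qsnmo)_d$ of Definition \ref{def:eta2}. Using the fibration $QS^0_d \to (\qsnmo)_d \to BSO$ of \eqref{eq:fibration-for-qsnmo}, the image of $\delta$ in $[\C P^4, BSO]$ equals (up to a fixed trivial shift) the difference of the stable bundle classes $[\xi_4(\dd)\big|_{\C P^4}] - [\xi_4(\dd')\big|_{\C P^4}]$. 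Since $SD_4(\dd) = SD_4(\dd')$ matches Pontryagin classes, Sanderson's theorem (cited in Section \ref{ss:surgery}) gives $\xi_4(\dd)\big|_{\C P^4} \cong \xi_4(\dd')\big|_{\C P^4}$, so this image vanishes. Hence $\delta$ lifts to a class in $[\C P^4, QS^0_d] \cong \pi_s^0(\C P^4_+)$, which is itself the restriction of a lift $\delta_\infty \in [\C P^\infty, QS^0_d]$.

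At odd primes $p$ not dividing $d$, Proposition \ref{prop:B-M} gives $((\qsnmo)_d)_{(p)} \simeq (G/O)_{(p)}$; Sullivan's splitting at odd primes writes this as $(BSO)_{(p)} \times (\coker J)_{(p)}$. The $BSO$-component of $\delta_{(p)}$ is already trivial, so it remains to kill the $(\coker J)_{(p)}$-component; for this I would invoke the odd-prime analogue of Feshbach's theorem \cite[Theorem 6]{Fe1}, based on the Segal conjecture for $S^1$ at the prime $p$, asserting that the relevant contribution from $[\C P^\infty, (\coker J)_{(p)}]$ of a class arising from the universal degree-$d$ normal map formula of Definition \ref{def:eta2} is determined by the Sullivan data. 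The main obstacle is the case of odd primes $p$ dividing $d$, where Proposition \ref{prop:B-M} fails and one cannot reduce to $G/O$. Here the plan is to work directly with $((\qsnmo)_d)_{(p)}$, decomposing it via its defining fibration and applying a parallel Segal-conjecture analysis of $[\C P^\infty, (QS^0)_{(p)}]$ combined with the explicit tensor-power description of $\eta_\infty(\dd)$, to show that the $p$-primary contribution to $\delta_\infty$ is already captured by the Sullivan data and therefore vanishes; implementing this at odd primes dividing $d$ is the principal technical hurdle.
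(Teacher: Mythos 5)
There is a genuine gap, and the overall strategy is also much heavier than what the statement requires. The decisive problem is the one you flag yourself: for odd primes $p$ dividing $d$ (and since $d$ is odd, every prime divisor of $d$ is such a prime unless $d=1$), Proposition \ref{prop:B-M} gives no equivalence $((\qsnmo)_d)_{(p)} \simeq (G/O)_{(p)}$, and your ``plan'' to analyse $((\qsnmo)_d)_{(p)}$ directly via its defining fibration and a Segal-conjecture argument is not an argument --- no mechanism is given for why the $(\coker J)$-type contribution at such $p$ should vanish or be controlled by the Sullivan data. A secondary foundational issue: $[\C P^4, (\qsnmo)_d]$ is not naturally an abelian group, since $(\qsnmo)_d$ is a single component of an $H$-space whose multiplication is multiplicative in the degree (it maps $(\qsnmo)_{r_1}\times(\qsnmo)_{r_2}$ to $(\qsnmo)_{r_1r_2}$); the paper only ever uses the \emph{action} of $\pi_8((\qsnmo)_d)$ on this set via the pinch map. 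So forming $\delta = \eta_4(\dd)-\eta_4(\dd')$ and invoking injectivity of a finitely generated abelian group into the product of its localisations is not available as stated.

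The paper's proof avoids all odd-primary analysis. By Theorem \ref{thm:FK} there is a diffeomorphism $X_4(\dd)\cong X_4(\dd')\sharp\Sigma$ for some $\Sigma\in\Theta_8$; after arranging that it preserves $x$ and invoking Corollary \ref{cor:bundle_map} to identify the bundle data, Equation \eqref{eq:Sigma} gives
$\eta_4(\dd) = \eta_4(\dd') + c^*(i_{d*}(\eta^{\fr}([f_\Sigma,\bar f_\Sigma])))$,
where $\eta^{\fr}([f_\Sigma,\bar f_\Sigma])$ lies in $\pi_8(QS^0_0)\cong\Z/2\oplus\Z/2$. Thus the two normal invariants differ a priori by the action of a $2$-torsion element, so agreement after $2$-localisation forces agreement on the nose. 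I would suggest you look for an a priori constraint on the difference (here supplied by Fang--Klaus) rather than trying to kill it prime by prime; the odd-primary structure of $(\qsnmo)_d$ at primes dividing $d$ is exactly what the paper's route is designed to sidestep.
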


\begin{proof}
By Theorem \ref{thm:FK}, there is a homotopy $8$-sphere $\Sigma$ and a diffeomorphism $h \colon X_4(\dd) \approx X_4(\dd') \sharp \Sigma$. We may assume that $h$ preserves the cohomology class $x$ (see the proof of Proposition \ref{prop:SC-conv}) and hence the maps $f_4(\dd)$ and $f_4(\dd') \circ h$ are homotopic (as in the proof of Corollary \ref{cor:bundle_map}). 
Let $\bar{h} \colon \nu_{X_4(\dd)} \to \nu_{X_4(\dd') \sharp \Sigma}$ be the stable bundle map covering $h$, which is uniquely determined up to homotopy by the derivative of $h$. 
By Theorem \ref{thm:eta_of_X}, there are bundle maps $\bar f_4(\dd)$ and $\bar f_4(\dd')$ such that $\eta_4(\dd) = \eta([f_4(\dd), \bar f_4(\dd)])$ and $\eta_4(\dd') = \eta([f_4(\dd'), \bar f_4(\dd')])$. As in Section \ref{ss:action}, let $f_{\Sigma} \colon \Sigma \rightarrow {\C}P^4$ be the constant map, then the choice of an arbitrary framing of $\Sigma$ determines a bundle map $\bar{f}_{\Sigma}$ over $f_{\Sigma}$ and we get a normal map $(f_4(\dd') \sharp f_{\Sigma}, \bar{f}_4(\dd') \sharp \bar{f}_{\Sigma}) \colon X_4(\dd') \sharp \Sigma \rightarrow {\C}P^4$. 
As explained in Section \ref{ss:surgery}, since $SD_4(\dd) = SD_4(\dd')$, there is a stable bundle isomorphism $\alpha \colon \xi_4(\dd') \big| _{{\C}P^4} \to \xi_4(\dd) \big| _{{\C}P^4}$. We have the following diagram of stable bundle maps, which commutes by Corollary \ref{cor:bundle_map}:
\[
\xymatrix{
\nu_{X_4(\dd)} \ar[d]_{\bar{h}} \ar[rr]^-{\bar f_4(\dd)} & &
\xi_4(\dd) \big| _{{\C}P^4} \\
\nu_{X_4(\dd') \sharp \Sigma} \ar[rr]^-{\bar{f}_4(\dd') \sharp \bar f_\Sigma} & & 
\xi_4(\dd') \big| _{{\C}P^4} \ar[u]_\alpha 
}
\]
It follows that the degree-$d$ normal maps 
\[
(f_4(\dd),\bar f_4(\dd)) \colon X_4(\dd) \to \C P^4 
\quad \text{and} \quad 
(f_4(\dd') \sharp f_{\Sigma}, \bar{f}_4(\dd') \sharp \bar{f}_{\Sigma}) \colon X_4(\dd') \sharp \Sigma \to \C P^4
\]
represent the same element in $\NN_d(\C P^4)$, so $\eta([f_4(\dd), \bar f_4(\dd)]) = \eta([f_4(\dd') \sharp f_\Sigma, \bar{f}_4(\dd') \sharp \bar f_\Sigma])$. 
Therefore by Lemma \ref{lem:cs} we have 
\[ \eta_4(\dd) = \eta_4(\dd') \sharp [\psi] \]
for some $[\psi] \in (i_d)_*(\pi_8(QS^0_d))$.
By the naturality of $\sharp$ (see \eqref{eq:nat}) we have
$\eta_4(\dd)_{[1/2]} = \eta_4(\dd')_{[1/2]} \sharp [\psi_{[1/2]}]$.
Since $\pi_8(QS^0_d) \cong \pi^s_8$ and $\pi^s_8\cong \Z/2 \oplus \Z/2$ by \cite[Theorem 7.1]{T}, $[\psi] \in \pi_8((\qsnmo)_d)$ is $2$-torsion. 
This implies that $[\psi_{[1/2]}]=0$, and hence $\eta_4(\dd)_{[1/2]} = \eta_4(\dd')_{[1/2]}$.
We assumed that $\eta_4(\dd)_{(2)} = \eta_4(\dd')_{(2)}$ and so by 
Lemma \ref{lem:local}, $\eta_4(\dd) = \eta_4(\dd')$.
\end{proof}

From now on we assume that $d$ is odd. Then $(Z[1/d])_{(2)} \simeq Z_{(2)}$ for any simple space $Z$, so from the Brumfiel-Madsen equivalence $(G/O)[1/d] \simeq (\qsnmo)_d[1/d]$ of Proposition \ref{prop:B-M}, we deduce the existence of a homotopy equivalence
\[ \chi \colon ((\qsnmo)_d)_{(2)} \simeq (G/O)_{(2)} \]
such that $\delta_{(2)} \circ \chi = (\delta_d)_{(2)}$,
where $\delta_{(2)}$ and $(\delta_d)_{(2)}$ are the $2$-localisations the canonical maps $\delta$ 
and $\delta_d$ from \eqref{eq:fib-qsnmo2}. 
Moreover, by Sullivan's $2$-primary splitting theorem for $G/O$ \cite[Theorem 5.18]{M-M},
there is a homotopy equivalence
\begin{equation} \label{eq:SS}
 \phi \colon (G/O)_{(2)} \to (BSO)_{(2)} \times \coker J_{(2)},
\end{equation}
where the space $\coker J_{(2)}$ 
is defined in \cite[Definition 5.16]{M-M}
and the map $\phi$ is constructed in the proof of \cite[Theorem 5.18]{M-M}.
From the splitting of $(G/O)_{(2)}$ in \eqref{eq:SS} we obtain a projection map
\begin{equation*}
\pi \colon (G/O)_{(2)} \to \coker J_{(2)}.
\end{equation*}

The following result is contained in Feshbach's proof of \cite[Theorem 6]{Fe1},
where the arguments rely on work of Feshbach and Ravenel on the Segal Conjecture \cite{Fe2, R}.

\begin{theorem}[cf.\ {\cite[Proof of Theorem 6]{Fe1}}] \label{thm:Feshbach}
For any prime $p$, $[\C P^\infty, \coker J_{(p)}] \cong 0$.
\end{theorem}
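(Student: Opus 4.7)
The plan is to recognise $[\C P^\infty, \coker(J)_{(p)}]$ as a generalised cohomology group of $\C P^\infty$ and then invoke the Segal Conjecture for $S^1$ to show it vanishes. First, $\coker(J)_{(p)}$ is a $p$-local infinite loop space, being a direct factor of $(G/O)_{(p)}$ via Sullivan's splitting. Writing $\coker(J)_{(p)} \simeq \Omega^\infty E$ for the corresponding connective spectrum $E$, we have $[\C P^\infty, \coker(J)_{(p)}] \cong E^0(\C P^\infty)$. Since the groups $\pi_n(E) = \pi_n(\coker(J))_{(p)}$ are finite $p$-groups in each non-negative degree, the Atiyah--Hirzebruch spectral sequence converges and $p$-completion agrees with $p$-localisation on $E^0(\C P^\infty)$; hence it suffices to prove that $E^0(\C P^\infty)^\wedge_p = 0$.

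The next step is to lift cohomology classes to stable cohomotopy. By the construction of $\coker(J)_{(p)}$ as a summand of stable homotopy modulo the image of $J$, there is a canonical map of $p$-local spectra from the sphere spectrum to $E$, inducing on homotopy groups the projection of $\pi_*^s$ onto its $\coker(J)_{(p)}$ summand. A comparison of the Atiyah--Hirzebruch spectral sequences for $\pi_s^*(\C P^\infty)_{(p)}$ and $E^*(\C P^\infty)$ shows that the induced transformation $\pi_s^0(\C P^\infty)^\wedge_p \to E^0(\C P^\infty)^\wedge_p$ is surjective. Thus it suffices to show that every class in $\pi_s^0(\C P^\infty)^\wedge_p$ maps to zero under the projection to $\coker(J)$.

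The final step invokes the Segal Conjecture for $S^1$, proved $2$-locally by Lin and extended to odd primes by Gunawardena, together with the detailed consequences for $\C P^\infty$ developed by Feshbach \cite{Fe2} and Ravenel \cite{R}. This machinery gives a complete description of $\pi_s^*(\C P^\infty)^\wedge_p$ in terms of transfers from finite cyclic subgroups of $S^1$ together with image-of-$J$ contributions. As in Feshbach's argument for \cite[Theorem 6]{Fe1}, one deduces that the projection to $\coker(J)_{(p)}$ annihilates every class in $\pi_s^0(\C P^\infty)^\wedge_p$, and the vanishing of $E^0(\C P^\infty)^\wedge_p$ then follows from the surjectivity in the previous step.

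The main obstacle is extracting from the Segal Conjecture the precise statement that every class in $\pi_s^0(\C P^\infty)^\wedge_p$ has zero image in $\coker(J)_{(p)}$. The generators produced by Segal's theorem are built from transfers and Adams operations, and must be carefully tracked through the quotient by the image of $J$; this is precisely the content of Feshbach's Theorem 6. The surjectivity claim in the second step, while more routine, also requires a careful comparison of Atiyah--Hirzebruch differentials between the two spectral sequences.
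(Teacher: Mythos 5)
Your proposal rests on the same two essential inputs as the paper --- Feshbach's Segal-conjecture computation of the stable cohomotopy of $\C P^\infty$ and Sullivan's splitting --- but it routes them through an intermediate step whose justification does not hold up as written. You claim that a comparison of Atiyah--Hirzebruch spectral sequences shows that the map $\pi_s^0(\C P^\infty)^\wedge_p \to E^0(\C P^\infty)^\wedge_p$ is surjective because the map of spectra is surjective on homotopy groups. That inference is invalid: surjectivity on the $E_2$-page does not imply surjectivity on the abutment, since a permanent cycle in the target spectral sequence need not lift to a permanent cycle in the source (its preimages may support differentials, or lie in a higher filtration of the $E_\infty$-page than the class one is trying to hit). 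The surjectivity you want is nevertheless true, but for a different reason: Sullivan's splitting gives $(QS^0_0)_{(p)} \simeq \mathrm{im} J_{(p)} \times \coker(J)_{(p)}$, so the projection onto $\coker(J)_{(p)}$ admits a section and the induced map on $[\C P^\infty, -]$ is split surjective with no spectral sequence needed. Note that for this you must invoke the splitting of $(QS^0_0)_{(p)}$, not merely the splitting of $(G/O)_{(p)}$ cited in your first paragraph.

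Once that step is repaired, your final step is also doing more work than necessary. Feshbach's proof of \cite[Theorem 6]{Fe1} shows that the reduced stable cohomotopy group $\pi^0_s(\C P^\infty) = [\C P^\infty, QS^0_0]$ vanishes outright; there are no nonzero classes whose images in $\coker(J)_{(p)}$ need to be tracked through transfers and Adams operations, so the ``main obstacle'' you identify is not actually present. The paper's argument is correspondingly short: since $QS^0_0$ is connected with finite homotopy groups, $[\C P^\infty, QS^0_0] \cong \prod_p \bigl( [\C P^\infty, \mathrm{im}J_{(p)}] \times [\C P^\infty, \coker(J)_{(p)}] \bigr)$, and the vanishing of the left-hand side forces the vanishing of each factor. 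Your surrounding framework of connective spectra and $p$-completions is sound (in particular the identification $E^0(\C P^\infty) = E^0(\C P^\infty)^\wedge_p$ is fine because the homotopy groups involved are finite $p$-groups), but it buys nothing beyond the direct-factor observation.
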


\begin{proof}
The proof of \cite[Theorem 6]{Fe1} states that the stable cohomotopy group $\pi^0_s(\C P^\infty)$ is trivial, where $\pi^0_s(\C P^\infty) = [\C P^\infty, QS^0_0]$.
The natural map $QS^0_0 \to \prod_p (QS^0_0)_{(p)}$ from $QS^0_0$ to the product of its $p$-localisations, taken over all primes $p$, is a weak equivalence, because $QS^0_0$ is connected with finite homotopy groups $\pi_i(QS^0_0) \cong \pi_i^s$ (hence $\pi_i(QS^0_0) \cong \prod_p \pi_i(QS^0_0)_{(p)}$).
Now by Sullivan's splitting of $QS^0_1 \simeq QS^0_0$ \cite[Theorem 5.18]{M-M}, 
$(QS^0_0)_{(p)} \simeq \mathrm{im} J_{(p)} \times \coker J_{(p)}$ for a certain $p$-local
space $\mathrm{im} J_{(p)}$.
Therefore 
\[ 0 \cong [\C P^\infty, QS^0_0] \cong 
\prod_p \bigl( [\C P^\infty, \mathrm{im}J_{(p)}] \times [\C P^\infty, \coker J_{(p)}] \bigr) \]
and the theorem follows.
\end{proof}

As a consequence of Theorem \ref{thm:Feshbach} we have

\begin{corollary} \label{cor:mod_p_normal_inv}
If $d$ is odd, then $\pi_{*}(\chi_*(\eta_n(\dd)_{(2)})) = 
0 \in [\C P^n, \coker J_{(2)}]$ for all $n$.
\end{corollary}

\begin{proof}
Let $i \colon \C P^n \to \C P^\infty$ be the inclusion and
consider the following commutative diagram:
$$
\xymatrix{
[\C P^\infty, (\qsnmo)_d] \ar[d]_-{i^*} \ar[r] &
[\C P^\infty, ((\qsnmo)_d)_{(2)}] \ar[d]_-{i^*} \ar[r]^-{\chi_*} &
[\C P^\infty, (G/O)_{(2)}] \ar[d]^-{i^*} \ar[r]^-{\pi_{*}} &
[\C P^\infty, \coker J_{(2)}] \ar[d]^-{i^*} \\
[\C P^n, (\qsnmo)_d] \ar[r] &
[\C P^n, ((\qsnmo)_d)_{(2)}] \ar[r]^-{\chi_*} &
[\C P^n, (G/O)_{(2)}] \ar[r]^-{\pi_{*}} &
[\C P^n, \coker J_{(2)}] \\
}
$$
Now $\eta_n(\dd) = i^*(\eta_\infty(\dd))$ by Definition \ref{def:eta2} and $[\C P^\infty, \coker J_{(2)}] \cong 0$ by Theorem \ref{thm:Feshbach}, so the corollary follows from the commutativity of the diagram.
\end{proof}

\begin{theorem} \label{thm:bordism_over_CP4}
Let $X_4(\dd)$ and $X_4(\dd')$ be complete intersections with $SD_4(\dd) = SD_4(\dd')$ and odd total degree.
Then $\eta_4(\dd) = \eta_4(\dd') \in [\C P^4, (\qsnmo)_d]$.
\end{theorem}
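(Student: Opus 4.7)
The plan is to apply Lemma \ref{lem:2local} to reduce to $2$-local equality, and then analyse the two factors of the $2$-local splitting of $(\qsnmo)_d$ separately. By Lemma \ref{lem:2local}, it suffices to show $\eta_4(\dd)_{(2)} = \eta_4(\dd')_{(2)} \in [\C P^4, ((\qsnmo)_d)_{(2)}]$. Since $d$ is odd, the Brumfiel-Madsen equivalence $\chi \colon ((\qsnmo)_d)_{(2)} \xra{~\simeq~} (G/O)_{(2)}$ of Proposition \ref{prop:B-M}, combined with the Sullivan splitting $\phi \colon (G/O)_{(2)} \simeq (BSO)_{(2)} \times \coker(J)_{(2)}$, induces a bijection
\[
(\phi \circ \chi)_* \colon [\C P^4, ((\qsnmo)_d)_{(2)}] \xra{~\cong~} [\C P^4, (BSO)_{(2)}] \times [\C P^4, \coker(J)_{(2)}] \text{\,,}
\]
and it is enough to verify that the two coordinates of $(\phi \circ \chi)_*(\eta_4(\dd)_{(2)})$ and $(\phi \circ \chi)_*(\eta_4(\dd')_{(2)})$ agree.

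The $\coker(J)_{(2)}$-coordinate $\pi_*(\chi_*(\eta_4(\dd)_{(2)}))$ vanishes for every $\dd$ by Lemma \ref{lem:mod_p_normal_inv}, so these coordinates trivially agree. For the $(BSO)_{(2)}$-coordinate, I would use the fibration sequence \eqref{eq:fibration-for-qsnmo} $QS^0_d \to (\qsnmo)_d \to BSO$ together with the compatibility of $\chi$ and $\phi$ with the canonical projection to $BSO$: the map $\cdot d \colon G/O \to (\qsnmo)_d$ covers the identity on $BSO$ and $\chi$ inverts it $2$-locally, while $\phi$ is a splitting of the canonical map $(G/O)_{(2)} \to (BSO)_{(2)}$. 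Hence the $(BSO)_{(2)}$-coordinate of $\eta_4(\dd)_{(2)}$ is the $2$-localisation of the map $\C P^4 \to BSO$ classifying the formal difference of source and target of the fibrewise degree-$d$ map appearing in Definition \ref{def:eta2}, namely the stable bundle $\gamma^{\dd}\big|_{\C P^4} \ominus k\gamma\big|_{\C P^4}$, which by Definition \ref{def:xi-nd} equals $\xi_4(\dd)\big|_{\C P^4} \oplus 5\gamma\big|_{\C P^4}$ stably.

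To conclude, since $SD_4(\dd) = SD_4(\dd')$ forces the Pontryagin classes $p_1$ and $p_2$ of $\xi_4(\dd)\big|_{\C P^4}$ and $\xi_4(\dd')\big|_{\C P^4}$ to coincide, the result of Sanderson cited in Section \ref{ss:surgery} yields a stable oriented bundle isomorphism $\xi_4(\dd)\big|_{\C P^4} \cong \xi_4(\dd')\big|_{\C P^4}$. Hence the two $(BSO)_{(2)}$-coordinates also agree, which combined with the vanishing of the $\coker(J)_{(2)}$-coordinates gives $\eta_4(\dd)_{(2)} = \eta_4(\dd')_{(2)}$ and hence $\eta_4(\dd) = \eta_4(\dd')$ by Lemma \ref{lem:2local}. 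The main technical obstacle will be confirming that the composition $\phi \circ \chi$ is indeed compatible with the map $(\qsnmo)_d \to BSO$ from the fibration sequence \eqref{eq:fibration-for-qsnmo} (and that no $2$-local units are lost in the translation); while this is expected from naturality and the explicit construction of the Brumfiel-Madsen equivalence, it requires tracing through the identifications carefully.
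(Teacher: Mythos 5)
Your overall architecture is exactly the paper's: reduce to a $2$-local statement via Lemma \ref{lem:2local}, pass through the Brumfiel--Madsen equivalence and the Sullivan splitting, kill the $\coker(J)_{(2)}$ coordinate with Lemma \ref{lem:mod_p_normal_inv}, and control the remaining factor by the bundle data. However, there is a genuine gap in your treatment of the $(BSO)_{(2)}$ factor. You identify the $(BSO)_{(2)}$-coordinate of the Sullivan splitting with the canonical map $\iota \colon ((\qsnmo)_d)_{(2)} \to (BSO)_{(2)}$ recording the formal difference of the source and target bundles, and then conclude from Sanderson's theorem that these coordinates agree. But the splitting projection $\rho \colon (G/O)_{(2)} \to (BSO)_{(2)}$ is \emph{not} the canonical map $\iota$: the section $\sigma$ of the Sullivan splitting is a solution of the Adams Conjecture, so $\iota \circ \sigma = \psi^3 - \Id$, which acts on $\pi_{4i}((BSO)_{(2)}) \cong \Z_{(2)}$ as multiplication by $3^{2i}-1$ (i.e.\ by $8$ and $80$ in the relevant degrees) and is therefore far from a $2$-local equivalence. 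Consequently the ``compatibility'' you defer to the end of your argument is not merely a matter of tracing identifications --- as stated, it is false.

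What your argument actually yields is $\iota_*(\hat{\eta}(\dd)) = \iota_*(\hat{\eta}(\dd'))$ (from Sanderson and the equality of Pontryagin classes) together with $\pi_*(\hat{\eta}(\dd)) = \pi_*(\hat{\eta}(\dd')) = 0$ (from Feshbach), i.e.\ both classes lie in the image of $\sigma_*$ and have the same image under $(\iota \circ \sigma)_* \circ \rho_* = (\psi^3 - \Id)_* \circ \rho_*$. The missing step is the injectivity of $(\psi^3 - \Id)_*$ on $[\C P^4, (BSO)_{(2)}]$; the paper supplies this by observing that $\psi^3 - \Id$ is a rational equivalence and that the total rational Pontryagin class $[\C P^4, BSO] \to H^{4*}(\C P^4; \Q)$ is injective. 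With that injectivity in hand one gets $\rho_*(\hat{\eta}(\dd)) = \rho_*(\hat{\eta}(\dd'))$, and your conclusion then follows as you describe.
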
 

The Sullivan Conjecture for $n = 4$ and odd total degree follows directly
from Theorems \ref{thm:SC_via_cni} and \ref{thm:bordism_over_CP4}.

\begin{theorem} \label{thm:SC_d=odd}
Let $X_4(\dd)$ and $X_4(\dd')$ be complete intersections with $SD_4(\dd) = SD_4(\dd')$ and
odd total degree.
Then $X_4(\dd)$ is diffeomorphic to $X_4(\dd')$. \hfill \qed
\end{theorem}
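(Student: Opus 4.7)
The plan is to observe that Theorem~\ref{thm:SC_d=odd} is an almost immediate combination of the two preceding results, so the proof reduces to checking that their hypotheses are met. I would proceed as follows.

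First, I would unpack the hypothesis $SD_4(\dd) = SD_4(\dd')$. By the definition of Sullivan data, this equality forces both $d = d'$ (the first coordinate) and $\chi(X_4(\dd)) = \chi(X_4(\dd'))$ (the last coordinate). The remaining coordinates (the integer Pontryagin numbers $p_i(4,\dd)$) are recorded as well; these will be used implicitly through Theorem~\ref{thm:bordism_over_CP4}. At this stage the hypothesis that the common total degree $d$ is odd enters.

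Next, I would apply Theorem~\ref{thm:bordism_over_CP4} to the pair $X_4(\dd), X_4(\dd')$. The hypotheses of that theorem are exactly $SD_4(\dd) = SD_4(\dd')$ together with $d$ odd, so it yields the equality of canonical degree-$d$ normal invariants
\[
\eta_4(\dd) = \eta_4(\dd') \in [\C P^4, (\qsnmo)_d] \text{\,.}
\]
This is the step containing all of the substantive content, namely the Hambleton--Madsen formulation of degree-$d$ normal invariants, the Brumfiel--Madsen $2$-local equivalence $((\qsnmo)_d)_{(2)} \simeq (G/O)_{(2)}$, Sullivan's $2$-local splitting, and Feshbach's theorem (based on the Segal conjecture for $S^1$) that $[\C P^\infty, \coker(J)_{(2)}] = 0$. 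However, all of this is packaged into the statement of Theorem~\ref{thm:bordism_over_CP4}, so at this point I simply cite it.

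Finally, I would feed the data $n = 4 \geq 3$, $d = d'$, $\chi(X_4(\dd)) = \chi(X_4(\dd'))$ and the equality $\eta_4(\dd) = \eta_4(\dd')$ into Theorem~\ref{thm:SC_via_cni}, which concludes that $X_4(\dd)$ is diffeomorphic to $X_4(\dd')$. There is essentially no obstacle at the level of this theorem itself; the real work has been carried out earlier (the hard part being the triviality of the $\coker(J)_{(2)}$ component of the normal invariant, subsumed in Theorem~\ref{thm:bordism_over_CP4}, together with the formula $\eta([f_n(\dd),\bar f_n(\dd)]) = \eta_n(\dd)$ of Theorem~\ref{thm:eta_of_X} which underlies Theorem~\ref{thm:SC_via_cni}). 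Consequently the proof is a one-line combination of the two cited results. \hfill $\qed$
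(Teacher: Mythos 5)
Your proposal is correct and is exactly the paper's argument: the paper likewise derives Theorem~\ref{thm:SC_d=odd} directly by combining Theorem~\ref{thm:bordism_over_CP4} (equality of the degree-$d$ normal invariants) with Theorem~\ref{thm:SC_via_cni}, after noting that $SD_4(\dd)=SD_4(\dd')$ gives the common total degree and Euler characteristic. No differences to report.
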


\begin{proof}[Proof of Theorem \ref{thm:bordism_over_CP4}]
By Lemma \ref{lem:2local}, it is enough to prove that
$\eta_4(\dd)_{(2)} = \eta_4(\dd')_{(2)}$.
Since the map $\chi \colon ((\qsnmo)_d)_{(2)} \to (G/O)_{(2)}$ is a homotopy equivalence, 
it suffices to show that
$\chi_*(\eta_4(\dd)_{(2)}) = \chi_*(\eta_4(\dd')_{(2)}) \in  [\C P^4, (G/O)_{(2)}]$.  To simplify the notation
we set
\[\hat{\eta}(\dd) := \chi_*(\eta_4(\dd)_{(2)})
\quad \text{and} \quad
\hat{\eta}(\dd') := \chi_*(\eta_4(\dd')_{(2)}).\]

Let $\mu \colon (G/O)_{(2)} \to BSO_{(2)}$ and $\alpha_{(2)} \colon (BSO)_{(2)} \to (G/O)_{(2)}$ be the projection and inclusion defined by the Sullivan splitting of $(G/O)_{(2)}$ in \eqref{eq:SS} respectively, so that 
$\phi = \mu \times \pi \colon (G/O)_{(2)} \to (BSO)_{(2)} \times \coker J_{(2)}$,
and consider the bijection
\[ \mu_* \times \pi_* \colon 
[\C P^4, (G/O)_{(2)}] \equiv [\C P^4, (BSO)_{(2)}] \times [\C P^4, \coker J_{(2)}]. \]
Corollary \ref{cor:mod_p_normal_inv} states that $\pi_*(\hat{\eta}(\dd)) = \pi_*(\hat{\eta}(\dd')) = 0$, hence
\begin{compactenum}[a)]
\item it remains to show that $\mu_*(\hat{\eta}(\dd)) = \mu_*(\hat{\eta}(\dd'))$; and 
\item we have 
\[
\hat{\eta}(\dd) = (\alpha_{(2)} \circ \mu)_*(\hat{\eta}(\dd))
\quad \text{and} \quad 
\hat{\eta}(\dd') = (\alpha_{(2)} \circ \mu)_*(\hat{\eta}(\dd')) .
\]
\end{compactenum}
It follows from Lemma \ref{lem:d-equal} that $\delta_{(2)*}(\hat{\eta}(\dd)) = \delta_{(2)*}(\hat{\eta}(\dd'))$, hence
\[
(\delta_{(2)} \circ \alpha_{(2)} \circ \mu)_*(\hat{\eta}(\dd)) = (\delta_{(2)} \circ \alpha_{(2)} \circ \mu)_*(\hat{\eta}(\dd')).
\]
By Lemma \ref{lem:da-inj} $(\delta_{(2)} \circ \alpha_{(2)})_* \colon [\C P^4, (BSO)_{(2)}] \to [\C P^4, (BSO)_{(2)}]$ is injective, so $\mu_*(\hat{\eta}(\dd)) = \mu_*(\hat{\eta}(\dd'))$, which completes the proof.
\end{proof}

\begin{lem} \label{lem:d-equal}
Suppose that $n \not\equiv 1$ mod $4$. If $X_n(\dd)$ and $X_n(\dd')$ are complete intersections such that $SD_n(\dd) = SD_n(\dd')$, then $(\delta_{(2)} \circ \chi)_*(\eta_n(\dd)_{(2)}) = (\delta_{(2)} \circ \chi)_*(\eta_n(\dd')_{(2)})$.
\end{lem}

\begin{proof}
By the assumption $SD_n(\dd) = SD_n(\dd')$, the complete intersections $X_n(\dd)$ and $X_n(\dd')$, and hence their normal bundles $\nu_{X_n(\dd)}$ and $\nu_{X_n(\dd')}$, have the same Pontryagin classes (regarded as integers). This implies that $p_j(\xi_n(\dd)) = p_j(\xi_n(\dd'))$ for $2j \leq n$ (see Section \ref{ss:SD}). By \cite[Theorem 3.9]{Sa}, if $n \not\equiv 1$ mod $4$, then $[\C P^n, BSO] \cong \Z^{\lfloor n/2 \rfloor}$, detected by the total Pontryagin class. Therefore $\xi_n(\dd) \big| _{{\C}P^n} \cong \xi_n(\dd') \big| _{{\C}P^n}$, and hence $(n{+}1)\gamma \oplus \xi_n(\dd) \big| _{{\C}P^n} \cong (n{+}1)\gamma \oplus \xi_n(\dd') \big| _{{\C}P^n}$.

Since $\delta_d$ classifies taking the formal difference of the source and target vector bundles of a fibrewise degree-$d$ map (see Section \ref{ss:action}), $(\delta_d)_*(\eta_n(\dd)) \in [\C P^n, BSO]$ is the classifying map of $-k\gamma \oplus \gamma^{d_1} \oplus \ldots \oplus \gamma^{d_k} \big| _{{\C}P^n} \cong (n{+}1)\gamma \oplus \xi_n(\dd) \big| _{{\C}P^n}$. Therefore we have $(\delta_d)_*(\eta_n(\dd)) = (\delta_d)_*(\eta_n(\dd'))$. 

By localising at $2$ we get that $(\delta_d)_{(2)*}(\eta_n(\dd)_{(2)}) = (\delta_d)_{(2)*}(\eta_n(\dd')_{(2)})$. We saw that $\chi$ satisfies $\delta_{(2)} \circ \chi = (\delta_d)_{(2)}$ (see Proposition \ref{prop:B-M}), so this means that $(\delta_{(2)} \circ \chi)_*(\eta_n(\dd)_{(2)}) = (\delta_{(2)} \circ \chi)_*(\eta_n(\dd')_{(2)})$.
\end{proof}

\begin{lem} \label{lem:da-inj}
Suppose that $n \not\equiv 1$ mod $4$. Then the map $(\delta_{(2)} \circ \alpha_{(2)})_* \colon [\C P^n, (BSO)_{(2)}] \to [\C P^n, (BSO)_{(2)}]$ is injective.
\end{lem}

\begin{proof}
The proof of \cite[Theorem 5.18]{M-M} shows that there is a commutative diagram,
\[ \xymatrix{ 
& &
(G/O)_{(2)} \ar[d]^{\delta_{(2)}} \\
(BSO)_{(2)} \ar[urr]^{\alpha_{(2)}} \ar[rr]^{\psi^3 - \Id} 
& &
(BSO)_{(2)},
}\]
where $\psi^3$ is the map induced by the third power Adams operation; see \cite[5.13 \& Theorem 5.18]{M-M}. The map $\psi^3 - \Id$ is a rational homotopy equivalence (to see this, we note that the homotopy fibre of $\psi^3 - \Id$ is connected and has finite homotopy groups by the second Sullivan splitting in \cite[Theorem 5.18]{M-M}), hence $\delta_{(2)} \circ \alpha_{(2)}$ is a rational homotopy equivalence.

Now let $j_\Q \colon (BSO)_{(2)} \to ((BSO)_{(2)})_\Q = BSO_\Q$ be the natural map to the rationalisation of $BSO$.  There is a commutative square
\[
\xymatrix{
[\C P^n, (BSO)_{(2)}] \ar[r]^-{j_{\Q*}} \ar[d]^{(\delta_{(2)} \circ \alpha_{(2)})_*} &
[\C P^n, BSO_\Q] \ar[d]^{(\delta_\Q \circ \alpha_\Q)_*} \\
[\C P^n, (BSO)_{(2)}] \ar[r]^-{j_{\Q*}} &
[\C P^n, BSO_\Q],
}
\]
where $\delta_\Q$ and $\alpha_\Q$ are the rationalisations of $\delta_{(2)}$ and $\alpha_{(2)}$ respectively. In particular, $\delta_\Q \circ \alpha_\Q$ is a homotopy equivalence, hence $(\delta_\Q \circ \alpha_\Q)_*$ is a bijection.

Since there are isomorphisms $[\C P^n, BSO_\Q] \cong [\C P^n, (BSO)_{(2)}] \otimes \Q \cong (\Z_{(2)})^{\lfloor n/2 \rfloor} \otimes \Q \cong \Q^{\lfloor n/2 \rfloor}$, it follows that $j_{\Q*}$ is injective. Therefore $(\delta_\Q \circ \alpha_\Q)_* \circ j_{\Q*} = j_{\Q*} \circ (\delta_{(2)} \circ \alpha_{(2)})_*$ is injective, which implies that $(\delta_{(2)} \circ \alpha_{(2)})_*$ is injective.
\end{proof}

\begin{remark} \label{rem:SC_prime_to_d}
The arguments of this section can be generalised to prove the Sullivan Conjecture
``prime to the total degree''.  We plan to take this up in future work. 
\end{remark}


\section{Appendix: Extensions and Toda brackets} \label{s:appendix}
Recall that $\SS^0$ denotes the sphere spectrum,
and that the $i^{\text{th}}$ stable stem, $\pi_i(\SS^0)$ is denoted $\pi^s_i$.
The $k$-fold suspension of $\SS^0$ is denoted by
$\SS^k$, and that if $f \colon \SS^k \to \SS^0$ is a map, then $C_f$ denotes the cofibre of $f$.
The aim of this Appendix is to prove Lemma \ref{lem:extension_and_Toda},
which concerns the role of Toda brackets in computing extensions
for homotopy groups of $C_f$.
Lemma \ref{lem:extension_and_Toda} is presumably well-known, but we did
not find a proof for it in the literature so far.

The stable homotopy groups of $C_f$
lie in the following fragment of the long exact Puppe sequence:
\[ \ldots \to  \pi^s_{j-k} \xra{f_*} \pi^s_{j} \xra{i_*} \pi_{j}(C_f) 
\xra{c_*} \pi^s_{j-k-1} \to \ldots \]
Here $f_*, i_*$ and $c_*$ are respectively the homomorphisms induced by composition with $f$, the inclusion 
$i \colon \SS^0 \subset C_f$, the collapse map $c \colon C_f \to \SS^{k+1}$.
We shall be interested in describing the extension
\begin{equation} \label{eq:extension}
0 \to \im(i_*) \to \pi_{j}(C_f) \to \im(c_*) \to 0.
\end{equation}
To do this we take 
an element $g \in \pi^s_{j-k-1}$ of order $a$
for some positive integer $a$, which lifts to $\bar g \in \pi_{j}(C_f)$. 
Then $a \bar g \in \im(i_*) \cong \coker(f_*)$.
The element $a \bar g \in \pi^s_j$ will of course depend on the choice of $\bar g$
in general.

To describe $a \bar g$ we consider the sequence of maps
$$ \SS^{j-1} \xra{~f~} \SS^{j-k-1} \xra{~g~} \SS^0 \xra{~a~} \SS^0.$$
Since $g \circ f$ and $a \circ g$ are both null-homotopic, the Toda bracket
$$ \an{a, g, f} \subseteq \pi^s_j $$
is defined.  Representatives for the elements of $\an{a, g, f}$ are defined as 
unions  
$$ (a \circ H_1) \cup (C(f) \circ H_2)  \colon C(\SS^{j-1}) \cup C(\SS^{j-1}) \to \SS^0,$$
where $H_1$ is a null-homotopy of
$g \circ f$, $H_2$ is a null-homotopy of $a \circ g$ and $C(-)$ denotes
the cone of a spectrum or a map.
The indeterminacy of $\an{a, g, f}$ arises from the choice of null-homotopies
$H_1$ and $H_2$ and is given by
$$ I(\an{a, g, f}) = f_*(\pi^s_{j-k}) + a\pi^s_j \subseteq \pi^s_j.$$
We now relate the restriction of the extension \eqref{eq:extension} to the 
cyclic subgroup $\an g \subset \pi^s_{j-k-1}$ generated by $g$
to the Toda bracket $\an{a, g, f}$.
 
\begin{lemma} \label{lem:extension_and_Toda}
Suppose that $g \in \pi^s_{j-k-1}$ has order $a$ and 
that $\bar g \colon \SS^j\to C_f$ is a map such that $c \circ \bar g = g$.
Then 
$$ a \bar g \in i_*( \an{a, g, f}) \subset \pi_j(C_f).$$
In particular, the extension
$$ 0 \to \im(i_*) \to (c_*)^{-1}(\an{g}) \to \an{g} \to 0 $$
is trivial if and only if $0 \in \an{a, g, f}$.
\end{lemma}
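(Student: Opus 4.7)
The plan is to derive the lemma from the standard description of lifts in a cofibration sequence, carefully identifying multiplication by $a$ in $\pi^s_j(C_f)$ with a Toda bracket representative. First, it suffices to verify $a\bar g \in i_*(\langle a, g, f\rangle)$ for one convenient lift: two lifts of $g$ through $c$ differ by an element of $i_*\pi^s_j$, whence their $a$-fold multiples differ by something in $i_*(a \pi^s_j) \subseteq i_*(I(\langle a, g, f\rangle))$, which is absorbed into the right-hand side of the asserted containment.

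Next, I choose a convenient lift $\bar g$ associated to a null-homotopy $H_1 \colon CS^{j-1} \to S^0$ of $g \circ f$; such $H_1$ exists by the assumption $gf = 0$, and it determines $\bar g$ in a standard way via the Puppe sequence. Geometrically, after suitable suspension, $\bar g$ has a representative on the decomposition $S^j = CS^{j-1}_+ \cup_{S^{j-1}} CS^{j-1}_-$ given by $i \circ H_1$ on $CS^{j-1}_+$ and, on $CS^{j-1}_-$, by coning the map $f$ to obtain $Cf$ and including the resulting cone as (part of) the attaching cell of $C_f$. The two pieces agree on the equator $S^{j-1}$ by the attaching relation defining $C_f$, and one checks $c \circ \bar g = g$ directly, since $c$ collapses $S^0 \subseteq C_f$ (killing the $CS^{j-1}_+$ contribution) and sends the attaching cell identically onto $S^{k+1}$.

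Now compute $a\bar g$ by postcomposing each hemisphere with multiplication by $a$. On $CS^{j-1}_+$ this is $i \circ (a \circ H_1)$, which visibly lies in $S^0 \subseteq C_f$. On $CS^{j-1}_-$, the $a$-fold wrap of the attaching cell can be pushed into $S^0 \subseteq C_f$ via a chosen null-homotopy $H_2 \colon CS^{j-k-1} \to S^0$ of $a \circ g$; after this deformation, the map on that hemisphere becomes $i \circ (H_2 \circ Cf)$. Gluing along the equator, $a\bar g = i \circ \Phi$, where $\Phi = (a \circ H_1) \cup_{S^{j-1}} (H_2 \circ Cf) \colon S^j \to S^0$ is exactly the Toda bracket representative of $\langle a, g, f\rangle$ given in the recollection preceding the lemma. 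Hence $a\bar g \in i_*(\langle a, g, f\rangle)$. The ``in particular'' statement then follows formally: the restricted extension splits iff some lift satisfies $a\bar g = 0$, iff $0 \in i_*(\langle a, g, f\rangle)$, and since $\ker(i_*) = f_*\pi^s_{j-k} \subseteq I(\langle a, g, f\rangle)$, this is equivalent to $0 \in \langle a, g, f\rangle$.

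The main obstacle is the geometric deformation in the third paragraph that uses $H_2$ to push the $a$-fold wrap of the attaching cell into $S^0 \subseteq C_f$. This is the classical manifestation of the Toda bracket as the precise obstruction to the extension problem, but carrying it out rigorously requires careful tracking of suspension dimensions, signs, and the cellular decomposition of $S^j$, to ensure that the glued map agrees on the nose with the formula for $\langle a, g, f\rangle$ given in the excerpt.
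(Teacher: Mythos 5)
Your argument is correct and essentially identical to the paper's: you build the lift $\bar g$ from a null-homotopy $H_1$ of $g\circ f$, apply the degree-$a$ map extended over $C_f$, observe it equals $a\circ H_1$ on one hemisphere, and compress the other hemisphere into $S^0$ via a null-homotopy $H_2$ of $a g$, landing exactly on the Toda bracket representative $(a\circ H_1)\cup(H_2\circ C(f))$; your deduction of the splitting criterion from $\ker(i_*)=f_*(\pi^s_{j-k})$ is likewise the paper's. Two minor notes: your opening reduction to a single lift (two lifts differ by $i_*(x)$, so their $a$-fold multiples differ by $i_*(ax)\in i_*(a\pi^s_j)$, which lies in the indeterminacy) is a worthwhile point the paper leaves implicit, while in your description of $\bar g$ the map on $CS^{j-1}_-$ should be the cone on a suspension of $g$ mapping into the attaching cell $D^{k+1}$, not the cone on $f$ --- a slip, since your equator-matching and collapse computations, and your final formula, all presuppose the former.
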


\begin{proof}
Given $H_1 \colon C(\SS^{j-1}) \to \SS^0$, a null-homotopy of $g \circ f \colon \SS^{j-1} \to \SS^0$,
we define a choice of $\bar g \in \pi_j(C_f)$ by
$$ \bar g = H_1 \cup C(g) \colon C(\SS^{j-1})_1 \cup C(\SS^{j-1})_2 \to C_f,$$
where the subscripts 
label two copies of $C(\SS^{j-1})$.
There is an $a$-fold fold map
$a_{C_f} \colon (C_f, \SS^0) \to (C_f, \SS^0)$, which extends $a \colon \SS^0 \to \SS^0$
and we have $a \bar g = a_{C_f} \circ \bar g$.
On the first copy of $C(\SS^{j-1})$ we have $(a_{C_f} \circ \bar g)\big| _{C(\SS^{j-1})_1} = a \circ H_1$.
On the second copy of $C(\SS^{j-1})$, the map $(a_{C_f} \circ \bar g)\big| _{C(\SS^{j-1})_2}$ defines the zero element of $\pi^s_j(C_f, \SS^0) \cong \pi^s_{j-1}$.  
It follows that $(a_{C_f} \circ \bar g)\big| _{C(\SS^{j-1})_2}$ is homotopic rel.~$\SS^{j-1}$ to $H_2 \circ C(f)$,
where $H_2 \colon C(\SS^{j-k-1}) \to \SS^0$ is a null-homotopy of $a g$.
It follows that $a \bar g = a_{C_f} \circ \bar g$ is homotopic to 
$i \circ ((a \circ H_1) \cup (H_2 \circ C(f))$ and so $a \bar g \in i_*(\an{a, g, f})$ as required.

Finally, the extension $0 \to \im(i_*) \to (c_*)^{-1}(\an{g}) \to \an{g} \to 0$ is trivial
if and only if there is $\bar g \in \pi_j(C_f)$ such that $a \bar g = 0$.  
Given such a $\bar g$, then $0 \in i_*(\an{a, g, f})$ by the previous paragraph
and so $\an{a, g, f}$ contains an element of $\ker(i_*) = f_*(\pi^s_{j-k})$.
Hence $\an{a, g, f} \cap I(\an{a, g, f}) \neq 0$ and so $0 \in \an{a, g, f}$.
Conversely, $0 \in \an{a, g, f}$ if and only if $\an{a, g, f} = f_*(\pi^s_{j-k}) + a \pi^s_j$
and then $a \bar g \in i_*(\an{a, g, f}) = ai_*(\pi^s_j)$.
Hence we can modify our choice of $\bar g$ to achieve $a \bar g = 0$.
\end{proof}

\end{document}